\documentclass[11pt,a4paper]{amsart}
\usepackage[utf8]{inputenc}
\usepackage[english]{babel}
\usepackage{amsmath,amssymb,amsthm,amscd,graphicx,mathrsfs,url,dsfont,enumitem}
\usepackage[usenames,dvipsnames]{color}
\usepackage[colorlinks=true,linkcolor=Blue,citecolor=Green,urlcolor=Blue]{hyperref}
\usepackage{dsfont}
\usepackage[super]{nth}
\usepackage[open, openlevel=2, depth=3, atend]{bookmark}
\hypersetup{pdfstartview=XYZ}
\usepackage{epigraph}
\usepackage{mathtools}
\usepackage{enumitem}
\usepackage{mathrsfs}
\usepackage{comment}

\let\oldtocsection=\tocsection
\let\oldtocsubsection=\tocsubsection
\let\oldtocsubsubsection=\tocsubsubsection
\renewcommand{\tocsection}[2]{\hspace{0em}\oldtocsection{#1}{#2}}
\renewcommand{\tocsubsection}[2]{\hspace{1em}\oldtocsubsection{#1}{#2}}
\renewcommand{\tocsubsubsection}[2]{\hspace{2em}\oldtocsubsubsection{#1}{#2}}

\def\?[#1]{\textbf{[#1]}\marginpar{\Large{\textbf{??}}}}

\usepackage{import}
\usepackage{xifthen}
\usepackage{pdfpages}
\usepackage{transparent}
\usepackage{chngcntr}
\counterwithin{figure}{section}
\usepackage{subcaption}

\numberwithin{equation}{section}

\newtheorem{theorem}{Theorem}[section]
\newtheorem{lemma}[theorem]{Lemma}

\newtheorem{proposition}[theorem]{Proposition}

\newtheorem{conjecture}[theorem]{Conjecture}
\theoremstyle{remark}
\newtheorem{remark}{Remark}
\newtheorem{example}{Example}

\theoremstyle{definition}
\newtheorem{definition}[theorem]{Definition}

\newcommand{\kg}{\mathfrak g}
\newcommand{\C}{\mathbb C}
\renewcommand{\S}{\mathbb S}
\newcommand{\D}{\mathbb D}
\newcommand{\cA}{\mathcal A}
\renewcommand{\d}{\mathrm d}
\newcommand{\kB}{\mathfrak B}
\newcommand{\del}{\partial}
\newcommand{\cC}{\mathcal C}
\newcommand{\cJ}{\mathcal J}
\newcommand{\R}{\mathbb R}
\newcommand{\cL}{\mathcal L}
\newcommand{\E}{\mathbb E}
\newcommand{\delbar}{\bar\partial}
\newcommand{\Z}{\mathbb Z}
\renewcommand{\P}{\mathbb P}
\newcommand{\km}{\mathrm{KM}}
\renewcommand{\Re}{\mathrm{Re}}
\renewcommand{\Im}{\mathrm{Im}}
\newcommand{\bS}{\mathbf S}
\newcommand{\ind}{\mathbf 1}
\newcommand{\cM}{\mathcal M}
\newcommand{\cQ}{\mathcal Q}
\newcommand{\bJ}{\mathbf{J}}
\newcommand{\Ad}{\mathrm{Ad}}
\newcommand{\cT}{\mathcal{T}}
\newcommand{\bk}{\mathbf{k}}
\newcommand{\bL}{\mathbf{L}}
\newcommand{\bR}{\mathbf{R}}
\newcommand{\N}{\mathbb{N}}
\newcommand{\cU}{\mathcal{U}}
\newcommand{\cV}{\mathcal{V}}
\newcommand{\cW}{\mathcal{W}}
\newcommand{\cR}{\mathcal{R}}
\newcommand{\cH}{\mathcal{H}}
\newcommand{\ralpha}{\text{\reflectbox{$\alpha$}}}

\newcommand{\tr}{\mathrm{tr}}
\newcommand{\sS}{\mathscr{S}}
\newcommand{\sE}{\mathscr{E}}

\title{Unitarising measures for Kac--Moody algebras}
\author{Guillaume Baverez}
\email{guillaume.baverez@bicmr.pku.edu.cn}
\address{Beijing International Center for Mathematical Research, Peking University}
\date{}

\keywords{Random fields, Loop groups, Kac--Moody algebras.}
\subjclass{Primary: 60G60; 17B67. Secondary: 81T40.}

\begin{document}

\begin{abstract}
Given a compact connected Lie group $G$ with dual Coxeter number $\check h$ and a level $\kappa<-2\check h$, we introduce a probability measure $\nu_\kappa$ on the space of holomorphic $\kg_\C$-valued $(1,0)$-forms in $\D$, in relation to the K\"ahler geometry of the loop group of $G$ and the action of a pair of Kac--Moody algebras at respective levels $\kappa$ and $-2\check h-\kappa>0$. We prove that $\nu_\kappa$ is characterised by a covariance property making rigorous sense of the formal path integral ``$\d\nu_\kappa(\gamma)=e^{-\check\kappa\sS(\gamma)}D\gamma$", where $D\gamma$ is the non-existent Haar measure on the loop group and $\sS$ is a K\"ahler potential for the right-invariant Kac--Moody metric. Infinitesimally, the covariance formula prescribes the Shapovalov forms of the Kac--Moody representations.

\end{abstract}

\maketitle


\tableofcontents

\section{Introduction}


    \subsection{Integration on loop groups and the Langlands correspondence}\label{subsec:langlands}
The main references for this section are \cite{Frenkel07_CFT,Frenkel20_is-there,EFK1,EFK2,Langlands}.
    
The \emph{arithmetic} Langlands programme is a vast web of conjectures relating elliptic curves to modular forms, and can be understood at first order as the harmonic analysis of $\mathrm{Bun}_{G_\mathbb{F}}(X)$. Here, $\mathbb F$ is a finite field, $G_\mathbb F$ is a connected reductive algebraic group over $\mathbb F$, $X$ is a smooth projective curve over $\mathbb F$, and $\mathrm{Bun_{G_\mathbb F}}(X)$ is the moduli stack of $G_\mathbb F$-bundles over $X$. Crucially, $\mathrm{Bun}_{G_\mathbb{F}}(X)$ inherits a canonical measure from the Haar measure on $G_{\mathbb{F}}(\mathbb A_{\mathbb F})$---the group of $\mathbb A_\mathbb F$-points of $G_\mathbb F$, where $\mathbb A_\mathbb F$ is the ad\`ele ring of $\mathbb F$. This allows one to speak of square-integrable functions on $\mathrm{Bun}_{G_\mathbb F}(X)$ (a.k.a. the \emph{automorphic forms}) and to define a family of integral transforms on $L^2(\mathrm{Bun}_{G_\mathbb F}(X))$ known as the \emph{Hecke operators}. The Langlands correspondence seeks to understand the spectral data of these operators (a.k.a. the \emph{Langlands parameters}) in relation to the representation theory of the Langlands dual group $^LG_\mathbb F$. 

The \emph{geometric} Langlands correspondence aims to address the same questions when $\mathbb F$ is replaced by the field of complex numbers $\C$, so that $G_\C$ is now a complex Lie group (more precisely, the complexification of a compact connected Lie group $G$), $X$ is a compact Riemann surface, and the moduli stack $\mathrm{Bun}_{G_\C}(X)$ can be realised as a double quotient of the loop group $LG_\C=C^\infty(\S^1;G_\C)$.\footnote{Other notions of regularity will be discussed below, the smooth topology playing no specific role at this point.} The main difference here is that there is no preferred measure on $\mathrm{Bun}_{G_\C}(X)$ since $LG_\C$ does not carry any non-trivial Haar measure (it is not locally compact), making it difficult to even \emph{state} a spectral problem: it is not clear how to define a Hilbert space, let alone the Hecke operators acting on it. See  \cite[Sections~1-2]{Langlands} and \cite[Sections 1.6 \& 3]{Frenkel20_is-there} for more on this topic, and in particular \cite[Section 1.1]{EFK2} for the appearance of infinite dimensional integration in the definition of Hecke operators. This issue led Beilinson \& Drinfeld to formulate the geometric Langlands correspondence in a \emph{categorical} language \cite{Beilinson-Drinfeld}, where for instance eigenfunctions are traded for eigensheaves. The fulfilment of this programme was one of the major mathematical achievements of the past years~\cite{Gaitsgory}. 

Recently, Langlands himself proposed a new approach to the geometric theory which is closer in spirit to the arithmetic one~\cite{Langlands}, making use of ``traditional" functional analysis and avoiding sophisticated algebro-geometric concepts. 
A related approach was also suggested in the mathematical physics literature \cite{Teschner18_ALC}, with motivations coming from quantum field theory and integrable systems (for recent progress and conjectures, see \cite{Gaiotto-Witten,Gaiotto-Teschner25}). The systematic mathematical study of this \emph{analytic} Langlands correspondence was initiated in \cite{EFK1,EFK2}; here, the Hilbert space is the space of square-integrable half-densities, which is acted on by the algebra of differential operators on $\mathrm{Bun}_{G_\C}(X)$. Due to the unbounded nature of these operators, delicate questions of functional analysis are brought to the forefront. A lot of work is put in the definition of Hecke operators, and the study of the corresponding spectral problem is the subject of many conjectures \cite[Section 1.5]{EFK1}, most of which are still open at the time of writing. The main difference with the Beilinson--Drinfeld approach is that one should restrict to \emph{single-valued}, \emph{square-integrable} eigensections, and the Langlands parameters are conjectured to be in correspondence with the set of \emph{$^LG_\C$-opers} on $X$ having \emph{real monodromy} \cite[Conjecture 1.9]{EFK1}. 

Importantly, the use of half-densities (rather than functions) gives a construction of the Hilbert space which avoids any integration theory on $LG_\C$. The goal of this article is to come back to the root of this question and construct a family of probability measures $(\nu_\kappa)_{\kappa<-2\check h}$ on the space $\cA$ of holomorphic $\kg_\C$-valued $(1,0)$-forms on $\D$  (which can be understood as a loop group orbit), where the number $\kappa$ refers to the level of a certain representation of the Kac--Moody algebra. These measures are characterised by a covariance formula with respect to a natural action of $LG$ on $\cA$, and the infinitesimal version of this formula specifies the Shapovalov form of the Kac--Moody algebras acting on $L^2(\nu_\kappa)$. 

    \subsection{What we learn from \texorpdfstring{$\mathrm{Diff}(\S^1)$}{Diff(S1)} and Schramm--Loewner evolutions}\label{subsec:blabla}
Loop groups are one of the two ubiquitous infinite dimensional Lie groups arising in the mathematics of two-dimensional quantum field theory, the other one being $\mathrm{Diff}(\S^1)$---the group of orientation preserving diffeomorphisms of the unit circle. The latter possesses a rich theory of integration known as \emph{Schramm--Loewner evolutions} (SLE), from which our main result is greatly inspired. This section recalls the main features of this theory and draws the analogies with the problem at hand. 
    
The turn of the millennium saw a great deal of activity on the topic of random Jordan curves. On the one hand, Schramm introduced the stochastic Loewner evolution (now Schramm--Loewner evolution or SLE) as the candidate scaling limit of interfaces of statistical systems at the critical temperature \cite{Schramm00}.\footnote{Here, we are only concerned with the SLE$_\kappa$ loop for $\kappa\in(0,4]$, which is an infinite, M\"obius-invariant measure on Jordan curves for each $\kappa$ \cite{Zhan21}, supported on curves of Hausdorff dimension $1+\frac{\kappa}{8}$.} Concomitantly and independently \cite{AM01}, Airault \& Malliavin formulated an axiomatic description of measures on Jordan curves (dubbed \emph{unitarising measures for the Virasoro algebra}) based on conformal welding and Kirillov's action \cite{Kirillov98}, but they did not succeed in constructing such measures. Motivated by the property of SLE called \emph{conformal restriction} \cite{LSW03,Zhan21}, Kontsevich \& Suhov proposed the far-reaching conjecture that SLE and Airault--Malliavin measures are the same \cite[Section~2.5.2]{KS07}. The proof of this conjecture is contained in \cite{BJ24,BJ25,Baverez25}, with a surprising twist that we sketch below.

 In \cite{BJ24,BJ25}, it is observed that the $L^2$-space of the SLE$_\kappa$ loop measure carries simultaneously \emph{two} pairs of commuting representations of the Virasoro algebra: a non-unitary one (but with a natural Shapovalov form\footnote{The Shapovalov form of a Virasoro representation $(L_n)_{n\in\Z}$ acting on a Hilbert space $\cH$ is defined as a Hermitian form $\cQ$ on $\cH$ such that $L_{-n}$ is the $\cQ$-adjoint of $L_n$ for all $n\in\Z$. A representation is unitary if $\cQ$ coincides with the inner-product on $\cH$.}) with central charge $c_\mathrm{m}=1-6(\frac{2}{\sqrt{\kappa}}-\frac{\sqrt{\kappa}}{2})^2\leq1$, and a unitary one with central charge $c_\mathrm{L}=26-c_\mathrm{m}\geq25$. These two representations express different ways of encoding the local symmetry acting on the space of Jordan curves: via conformal restriction \cite{LSW03,Zhan21} or via Kirillov's action \cite{Kirillov98,AM01}. The discrepancy between the two central charges can be understood as the Jacobian of the conformal welding map, giving the change of basis between the two descriptions of the tangent bundle \cite{BJ25}. The outcome is that one can heuristically understand the SLE$_\kappa$ loop measure as the Feynman path integral $e^{-\frac{c_\mathrm L}{12\pi}\bS(\eta)}D\eta$, where $D\eta$ is the non-existent Kirillov-invariant measure on the space of Jordan curves and $\bS$ is the K\"ahler potential for the Weil--Petersson metric on the universal Teichm\"uller space, a.k.a the \emph{universal Liouville action}~\cite{Takhtajan-Teo06}.

 In this paper, we address the same question for loop groups rather than $\mathrm{Diff}(\S^1)$, i.e. we construct a measure making rigorous sense of the path integral ``$\d\nu_\kappa(\gamma)=e^{-\check\kappa\sS(\gamma)}D\gamma$" for all $\check\kappa=-2\check h-\kappa>0$, where $D\gamma$ is the non-existent Haar measure on $LG$ and $\sS$ is a K\"ahler potential for the right-invariant Kac--Moody metric on (a subgroup of) $LG$, which we call the \emph{universal Kac--Moody action} (see Section \ref{subsec:potential}). As opposed to the $\mathrm{Diff}(\S^1)$-case, $\sS$ is not invariant under $\gamma\mapsto\gamma^{-1}$, leading to the non-reversibility of $\nu_\kappa$ (contrary to the SLE loop measure). In fact, the very definition of the law of the inverse is non-obvious since $\nu_\kappa$ does not charge the space of continuous loops. See Section \ref{subsec:shapo} for more on this issue. 
 
 Involved in our construction are two pairs of commuting representations of the Kac--Moody algebra at levels $\kappa$ and $-2\check h-\kappa$ respectively, encoding two distinct realisations of the local symmetry of the model. The number $-2\check h$ plays the same role as 26 in the case of $\mathrm{Diff}(\S^1)$, expressing the Jacobian of the change of coordinate between real loops and holomorphic forms (a.k.a. the Birkhoff factorisation, see Section \ref{subsec:factorisation}). Compared to \cite{BJ24,BJ25}, the main technical difficulty is the absence of a candidate measure for $\nu_\kappa$, and the existence and uniqueness of this measure is the main contribution of this work.

 Finally, we note that the condition $\kappa<-2\check h$ strictly excludes the \emph{critical level} $\kappa=-\check h$, which is usually the one considered in the Langlands correspondence, due to the degeneracy of the Sugawara operators to a commuting family and their identification with the quantum Hitchin Hamiltonians \cite[Section 8]{Frenkel07_CFT}. The non-critical $\kappa$-values are sometimes called the \emph{quantum} analytic Langlands correspondence in mathematical physics \cite{Gaiotto-Teschner25}.
 
    \subsection{Main result}\label{subsec:results}
 In the spirit of not overloading the introduction with technicalities, we will simply sketch some parts of the setup and refer to the subsequent sections for details. 

Fix a compact connected Lie group $G$ with complexification $G_\C$ and respective Lie algebras $\kg,\kg_\C$, and let $L^\omega_1G$ be the subgroup of the loop group of $G$ exhibited in Section \ref{subsec:factorisation} (see in particular Lemma \ref{lem:subgroup}). Let $\hat D^\omega_\infty G_\C$ be the group of holomorphic functions $g:\D^*\to G_\C$ such that $g(\infty)=1_{G_\C}$. Let $\cA$ (resp. $\cA^\omega$) be the space of holomorphic $\kg_\C$-valued $(1,0)$-forms in $\D$ (resp. a neighbourhood of $\bar\D$), and endow $\cA$ with the local uniform topology (making it a Polish space).

In Section \ref{subsec:potential}, we define a function $\sS:\cA^\omega\to\R_+$ which is a K\"ahler potential for the right-invariant Kac--Moody metric (defined in Section \ref{subsec:loop_groups}). In Section \ref{subsec:real_action}, we define a left action of $L^\omega_1G$ on $\cA$, denoted $(\chi,\alpha)\mapsto\chi\cdot\alpha$ for $\chi\in L^\omega_1G$, $\alpha\in\cA$. While $\sS$ is not defined on the whole of $\cA$, Proposition \ref{prop:Omega} states that there is $\Omega\in C^0(L^\omega_1G\times\cA)$ extending the values of the function $(\chi,\alpha)\mapsto\sS(\chi\cdot\alpha)-\sS(\alpha)$ on $L^\omega_1G\times\cA^\omega$. Similarly, we define in Section \ref{subsec:action_complex} a right action of $\hat D^\omega_\infty G_\C$ on $\cA$, denoted $(\alpha,h)\mapsto\alpha\cdot h$, and there is a function $\Lambda\in C^0(\cA\times\hat D^\omega_\infty G_\C)$ extending the values of $(\alpha,h)\mapsto\sS(\alpha\cdot h)-\sS(\alpha)$ on $\cA^\omega\times\hat D^\omega_\infty G_\C$.

\begin{theorem}\label{thm:main}
    Fix $\check\kappa=-2\check h-\kappa>0$. 
    \begin{enumerate}[label=\arabic*.]
\item There exists a unique Borel probability measure $\nu_\kappa$ on $\cA$ such that for all $\chi\in L^\omega_1G$ and all $F\in C^0_b(\cA)$,
\begin{equation}\label{eq:quasi-invariance}
\int_\cA F(\chi^{-1}\cdot\alpha)\d\nu_\kappa(\alpha)=\int_\cA F(\alpha)e^{-\check\kappa\Omega(\chi,\alpha)}\d\nu_\kappa(\alpha).
\end{equation}
\item The measure $\nu_\kappa$ is the unique Borel probability measure on $\cA$ such that for all $h\in\hat D^\omega_\infty G_\C$ and all $F\in C^0_b(\cA)$,
\begin{equation}\label{eq:quasi-invariance-bis}
    \int_\cA F(\alpha\cdot h^{-1})\d\nu_\kappa(\alpha)=\int_\cA F(\alpha)e^{\kappa\Lambda(\alpha,h)}\d\nu_\kappa(\alpha)
\end{equation}
\end{enumerate}
We call $\nu_\kappa$ the \emph{unitarising measure for the Kac--Moody algebra at level $\check\kappa$}.
\end{theorem}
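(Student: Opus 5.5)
We plan to build $\nu_\kappa$ as a limit of finite-dimensional approximations and to read off both covariance formulas from a Gaussian-type structure after a change of variables. Write $\alpha\in\cA$ as $\alpha=\sum_{n\geq0}\alpha_n z^n\,\d z$ with $\alpha_n\in\kg_\C$. The quadratic part of $\sS$ at $\alpha=0$ is the Kac--Moody Kähler form, which in these coordinates is $\sum_n \frac{|\alpha_n|^2}{n+1}$ up to normalisation. So the natural first candidate is the complex Gaussian measure $\mu$ with independent coefficients of variance proportional to $(n+1)/\check\kappa$. The series converges locally uniformly in $\D$ almost surely (a log-correlated field paired with $\d z$), so $\mu$ is a Borel probability measure on the Polish space $\cA$.

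\textbf{Existence.} We would not expect $\mu$ itself to satisfy \eqref{eq:quasi-invariance}, because $\sS$ is not quadratic. The plan is to obtain $\nu_\kappa$ by a Girsanov-type reweighting. We would use the Birkhoff factorisation of Section \ref{subsec:factorisation} to identify $\cA$ with a space of formal loops $\gamma$ with $\alpha=\gamma^{-1}\partial\gamma$ (or a right-trivialised analogue), and define $\nu_\kappa$ as the law of the solution $\gamma$ of the holomorphic ODE driven by the Gaussian field. Concretely, this means a martingale/Loewner-type construction radially in $r\to1$: truncate to $\alpha_r(z)=\alpha(rz)$, which lies in $\cA^\omega$ where $\sS$ is defined, form the measures $\frac{1}{Z_r}e^{-\check\kappa\sS_r}$ on finite-dimensional truncations, and take a weak limit.

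\textbf{Covariance.} For smooth truncations, the finite-dimensional change of variables under $\chi$ contributes $e^{-\check\kappa(\sS(\chi\cdot\alpha)-\sS(\alpha))}$ times a Jacobian. The Jacobian is the $-2\check h$ shift: the determinant of the Birkhoff change of coordinates, or the adjoint Casimir anomaly, turns $-\kappa$ into $\check\kappa$. Passing to the limit uses Proposition \ref{prop:Omega}, namely the continuity of $\Omega$ on $L^\omega_1G\times\cA$, together with uniform integrability of $e^{-\check\kappa\Omega(\chi,\cdot)}$. That integrability should follow because $\Omega(\chi,\cdot)$ is affine-plus-bounded in the field for analytic $\chi$. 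Item 2 is handled in the same way using $\Lambda$, with $h\in\hat D^\omega_\infty G_\C$ acting as a triangular change of variables in the coefficients $\alpha_n$. For it, the relevant weight is $e^{\kappa\Lambda}$, coming from the holomorphic Jacobian on the other side of the factorisation.

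\textbf{Uniqueness.} We would show that any $\nu$ satisfying \eqref{eq:quasi-invariance} is determined infinitesimally. Differentiating along one-parameter subgroups $e^{tX z^n}$ of $L^\omega_1G$ gives integration-by-parts identities expressing $\int \partial_{X,n}F\,\d\nu$ through the derivative of $\Omega$ at $\chi=1$, which is linear in $\alpha$ plus a constant. These identities determine all mixed moments of finitely many coefficients $\alpha_n$ recursively, i.e.\ the Shapovalov form on polynomial functionals. We then need moment determinacy; we would get it by proving exponential moment bounds directly from the covariance formula, applying it with $\chi$ chosen so that $\Omega$ controls $|\alpha_n|$.

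The main obstacle is the existence step. We must control the normalisations $Z_r$ and prove tightness and convergence of $e^{-\check\kappa\sS_r}$, since $\sS$ blows up on generic $\alpha$. We also must check that the renormalised Jacobian yields exactly the shift $-2\check h$. We would first try to realise $\nu_\kappa$ as an explicit Girsanov transform of $\mu$, with density a martingale in $r$.
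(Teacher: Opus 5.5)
\paragraph{Existence.} You rightly identify this step as the crux, but it is not yet a construction, and it breaks exactly at the Jacobian. Neither the radial truncation $\alpha\mapsto\alpha(r\cdot)$ nor any projection onto finitely many coefficients is equivariant for the left action of $L^\omega_1G$: the fields $K_\ralpha u$ of Lemma~\ref{lem:poly_K} move all modes. So there is no finite-dimensional change of variables whose determinant could produce the shift by $2\check h$.

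The reference measure is also a problem. Against Lebesgue measure on the coefficients, the heuristically correct weight is $e^{-\check\kappa\sS-2\check h\hat\sS}$, and $\hat\sS$, a functional of the inverse loop, has no almost-sure meaning. Against your Gaussian $\mu$, the weight $e^{-\check\kappa\sS_r}$ counts the quadratic part twice. Nor is $\Omega(\chi,\cdot)$ affine-plus-bounded: it integrates $\cL_{u_t}\sS(\chi_t\cdot\alpha)$, which is linear in $\chi_t\cdot\alpha$ and hence nonlinear in $\alpha$.

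The idea you are missing is to borrow quasi-invariance from a measure that already has it. The paper starts from the Wiener measure $\P_\sigma$ on $G$-valued loops, which satisfies Malliavin's exact integration by parts $\E_\sigma[\cL_uF]=\sigma^2\E_\sigma[\cL_u(\sE)F]$. It pushes $\P_\sigma$ to $\cA$ through the Birkhoff factorisation and shrinks radially so that $\sS<\infty$. It then reweights by $e^{-\check\kappa\sS}\leq1$ (Lemma~\ref{lem:positive}) and lets $\sigma\to0$. The Wiener term is $O(\sigma)$, tightness comes from $\int|\ralpha_{b,m}|^2\d\nu^\sigma_\kappa=m/\check\kappa+O(\sigma m^2)$, and the limiting identity is integrated along paths using $\Omega$. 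No Jacobian is computed for item~1.

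The shift by $2\check h$ enters only in passing from item~1 to item~2, a step your plan omits. You build item~2 ``in the same way'' but never show that the measure of item~1 satisfies \eqref{eq:quasi-invariance-bis}. In Lemma~\ref{lem:induction} one writes $\cJ_u=\cL_{\tilde u}$ with $\alpha$-dependent $\tilde u=\sum_{m\leq-1}\beta^u_{b,m}b_m$. Integrating by parts then produces the divergence $\sum_{b,m}\cL_{b,m}(\beta^u_{b,m})=-2\check h\,\alpha(u)$, via the adjoint Casimir, which turns the coefficient $-\check\kappa$ into $\kappa=-\check\kappa-2\check h$. Proposition~\ref{prop:moments} integrates this.

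\paragraph{Uniqueness.} This step fails for a structural reason. Differentiating \eqref{eq:quasi-invariance} gives exactly $\int\bL_uA\,\overline{B}\,\d\nu=-\int A\,\overline{\bL_{u^*}B}\,\d\nu$ for holomorphic polynomials $A,B$ in $\ralpha$ (equivalently in $\alpha$), with $\bL_{b,m}\ind=0$ for $m\geq0$. So, as you say, wherever these identities are valid (as they would be under exponential moments), the Gram matrix of $L^2(\nu)$ is the Shapovalov form of the level-$\check\kappa$ vacuum module. That form is not positive semi-definite unless $\check\kappa\in\Z$.

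To see this, take the highest root $\theta$ and set $e:=\bL_{E_\theta,-1}$. Its adjoint is $\bL_{F_\theta,1}$, and $[\bL_{F_\theta,1},e]=\check\kappa-\bL_{H_\theta,0}$. Hence $\|e^n\ind\|^2=n(\check\kappa-n+1)\|e^{n-1}\ind\|^2$; for instance $\int|e^2\ind|^2\d\nu=2\check\kappa(\check\kappa-1)<0$ when $0<\check\kappa<1$. So for non-integral $\check\kappa$, a covariant measure cannot have enough moments for these identities to hold at level $\lfloor\check\kappa\rfloor+2$. Heuristically, $\sS$ grows only like $\log(1+|c|^2)$ along the $\mathbb{P}^1$-orbit of the $\mathrm{SU}_2$ with complexified Lie algebra spanned by $E_\theta z^{-1},F_\theta z,H_\theta$, so $e^{-\check\kappa\sS}$ gives only polynomial tails. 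The exponential moment bounds you propose are therefore false, and no moment problem is available.

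The same computation is incompatible, for $\check\kappa\in(0,1)$, with Proposition~\ref{lem:moments} ($\cC^{1,0}\subset L^2(\nu)$) together with the contravariance on $\tilde\cV_\kappa$ used in Proposition~\ref{prop:uniqueness} and Theorem~\ref{thm:unitary}. So you cannot close this gap by following the paper's moment-based route, which determines moments through the $\bJ$-module and then recovers the characteristic functional by analytic continuation in the level. Uniqueness has to be argued with bounded observables. For instance, the density of one covariant measure with respect to the average of two is $L^\omega_1G$-invariant, which reduces uniqueness to an ergodicity statement.
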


A few comments are in order. Even though the Birkhoff factorisation allows us to identify $\cA^\omega$ with an orbit of the analytic loop group, the measure $\nu_\kappa$ gives 0 mass to the set of those $\alpha\in\cA$ admitting a continuous extension to $\bar\D$, so there is no way to associate a loop in $G$ to a sample of $\nu_\kappa$. In particular, $\sS(\alpha)=\infty$ for $\nu_\kappa$-a.e. $\alpha\in\cA$. Still, it is instructive to think heuristically that
\begin{equation}\label{eq:nu_heuristic}
``\,\d\nu_\kappa(\gamma)=e^{-\check\kappa\sS(\gamma)}D\gamma\,",
\end{equation}
where $D\gamma$ is the (non-existent) Haar measure on the loop group: indeed, the transformation rule prescribed by \eqref{eq:quasi-invariance} is precisely the one that would be satisfied by the right-hand-side of \eqref{eq:nu_heuristic}.

One of the key aspects of our proof is to understand the change of coordinate between the loop group and the space of holomorphic forms $\cA$ (equivalently, the space $D_0G_\C$ of holomorphic functions $g:\D\to G_\C$ with $g(0)=1_{G_\C}$), given by the Birkhoff factorisation. This gives another interpretation of $\nu_\kappa$ as
\[``\,\d\nu_\kappa(g)=e^{-\check\kappa\sS(g)-2\check h\hat\sS(g)}Dg\,"\qquad\text{or}\qquad``\,\d\nu_\kappa(\alpha)=e^{-\check\kappa\sS(\alpha)-2\check h\hat\sS(\alpha)}D\alpha\,",\]
where $Dg$ (resp. $D\alpha$) is the (non-existent) Haar (resp. Lebesgue) measure on $D_0G_\C$ (resp. $\cA$), and we have set $\hat\sS(\gamma):=\sS(\gamma^{-1})$ for all $\gamma\in L^\omega_1G$. In other words, we can think of $e^{-2\check h\hat\sS}$ as the Jacobian of the Birkhoff factorisation map. Note that there is no obvious way to extend $\hat\sS$ to a function defined $\nu_\kappa$-a.e. since its samples are not continuous loops, so this is a purely formal interpretation (the rigorous sense of which is the content of Lemma \ref{lem:induction}).

Infinitesimally, the transformation properties of $\nu_\kappa$ have an important counterpart with respect to certain representations of the Kac--Moody algebra introduced in Sections \ref{sec:km_complex} and~\ref{sec:km_real}. Namely, by differentiating \eqref{eq:quasi-invariance} and \eqref{eq:quasi-invariance-bis} at $\chi=\ind_G$ and $h=\ind_{G_\C}$ respectively, one gets an expression for their Shapovalov forms. In particular, Theorem \ref{thm:unitary} motivates the terminology ``unitarising measure" by analogy with \cite{AM01}. Since the statement is a bit technical, we postpone it to Section~\ref{subsec:shapo}.

\begin{remark}
    In the case $G=\S^1$ ($\kg_\C=\C$), the Kac--Moody algebra reduces to a Heisenberg algebra, for which unitarising measures are well-known to be Gaussian \cite[Section 2]{Malliavin-unitarizing}. The reader can check in this case that indeed $\sS(\alpha)=\frac{i}{4\pi}\int_\D\alpha\wedge\bar\alpha$, and the covariance formulas in Theorem \ref{thm:main} are just the usual Cameron--Martin shifts. The random differential form is better described as $\alpha=\frac{i}{\sqrt{\check\kappa}}\del\varphi$ where $\varphi:\D\to\R$ is the Gaussian harmonic function with covariance kernel $\E[\varphi(z)\varphi(\zeta)]=\log|1-z\bar\zeta|$. In the sequel, we will only focus on the non-trivial case where $G$ is non-Abelian.
\end{remark}



    \subsection{Discussion and outlook}

        \subsubsection{Relationship with existing literature}\label{subsec:Wiener}
To the best of our knowledge, all the research in probability theory concerning measures on loop groups has focused on the Wiener measure, for which there is an extensive literature starting from the Malliavins \cite{Malliavins}. We refer to \cite{Driver_proceedings} for a review and additional references. Additionally, Pickrell proved that the Wiener measure has a weak limit in the large temperature limit \cite{Pickrell00,Pickrell06}, which should also correspond to the $\kappa\nearrow-2\check h$ (or $\check\kappa\searrow0$) limit of the unitarising measure $\nu_\kappa$. See also \cite[Section 3.2]{Frenkel20_is-there} for a discussion of more exotic integration theories on loop spaces (motivic, power-series valued...) and what appears to be their limitations in the context of the analytic Langlands correspondence.

There is a number of quick ways to see that the unitarising measure $\nu_\kappa$ and the Wiener measure are mutually singular. First, samples of the Wiener measure are almost surely continuous loops, while $\nu_\kappa$ does not charge this set. This is the reason why we must define $\nu_\kappa$ on the space of $\kg_\C$-valued holomorphic forms in $\D$, since samples do not converge on the boundary. Second, the action functional for the Wiener measure is the kinetic energy 
\begin{equation}\label{eq:kinetic}
\sE(\gamma):=\frac{1}{2i\pi}\oint_{\S^1}\tr(\gamma^{-1}\del_z\gamma,\gamma^{-1}\del_z\gamma)z\d z\geq0,\qquad\gamma\in L^\omega G,
\end{equation}
while that for $\nu_\kappa$ is the potential $\sS$ introduced in Section \ref{subsec:potential}. These two actions are quite different, as seen from their expansion near $\gamma=\mathbf 1_G$: writing $\gamma_t=e^{tu+\bar tu^*}$ for some $u\in D_0^\omega\kg_\C$ and $u^*(z)=\overline{u(1/\bar z)}$, the kinetic energy behaves like $|t|^2\oint\tr(\del_zu,\del_zu^*)\frac{z\d z}{2i\pi}$ as $t\to0$. On the other hand, since $\sS$ is a K\"ahler potential for the Kac--Moody metric on $LG/G$, we have $\sS(\gamma_t)\sim\frac{|t|^2}{4i\pi}\oint_{\S^1}\tr(u^*\del u)$ as $t\to0$. So the kinetic energy behaves like a Sobolev $H^1$-norm on $\S^1$, while $\sS$ behaves like an $H^{1/2}$-norm. These two norms induce mutually singular Gaussian measures in the Lie algebra of the loop group: the first is the $\kg$-valued Brownian bridge, while the second is the $\kg$-valued $\log$-correlated field on $\S^1$. Intuitively, one can think of $\nu_\kappa$ as a ``$G$-valued $\log$-correlated field on $\S^1$", even though this does not make literal sense.


        \subsubsection{Coupling \texorpdfstring{$\nu_\kappa$}{unitarising measures} with the \texorpdfstring{$G_\C/G$-}{coset }WZW model}
Let $\kappa<-2\check h$. On the one hand, one can interpret $\nu_\kappa$ as a random $G_\C$-bundle over the Riemann sphere (trivialised in the charts $\D$ and $\D^*$). On the other hand, given a fixed (stable) $G_\C$-bundle on a Riemann surface, the $G_\C/G$-Wess--Zumino--Witten model can be interpreted as random metric in this bundle \cite{Gawedzki-Kupiainen,Gawedzki91}. In the spirit of the coupling between Liouville CFT and Schramm--Loewner evolutions \cite{Sheffield16,AHS23,BJ25}, we plan to investigate the coupling between our measure $\nu_\kappa$ with these WZW models, starting with the case $G=\mathrm{SU}_2$ (which is the only case for which a mathematical construction of the path integral is known \cite{GKR25_H3}) on the Riemann sphere.

        \subsubsection{Measures on \texorpdfstring{$\mathrm{Bun}_{G_\C}$}{moduli spaces} and the Langlands correspondence}
Given a compact Riemann surface $X$, the \emph{Kac--Moody uniformisation} describes the moduli space  $\mathrm{Bun}_{G_\C}(X)$ as the double quotient $G_\C(X\setminus\{p\})\big\backslash L^\omega G_\C\big/D^\omega G_\C$ \cite[Chapter~18.1.6]{Frenkel04_book}, where $p\in X$ is an arbitrary point and $G_\C(X\setminus\{p\})$ denotes the space of meromorphic $G_\C$-valued functions on $X$ with poles only allowed at $p$. The isomorphism is obtained by describing a bundle over two local chart (a neighbourhood of $p$ and $X\setminus\{p\}$), equivalently a transition function on the overlap (defining a point in $L^\omega G_\C$). This suggests a way to endow $\mathrm{Bun}_{G_\C}(X)$ with a measure by passing our measure $\nu_\kappa$ to the quotient by integrating in the fibres of the projection. 

A \emph{Hecke modification} consists in changing this local data defining the bundle, and the Hecke operators are integral transforms over the space of such modifications \cite{EFK2} which typically looks like the space of $G_\C$-valued functions on $X$ defined away from a neighbourhood of $p$. The measures introduced in this paper could give another approach to these integral transforms. See Section \ref{subsec:shapo} for some suggestions.

        \subsubsection{Sugawara construction and Drinfeld--Sokolov reduction}
The Sugawara construction is an embedding of the Virasoro algebra into the Kac--Moody algebra. A natural step forward will be to construct these operators in the functional analytic language, acting as unbounded operators on $L^2(\nu_\kappa)$. It would also be interesting to see if this representation generates a family of Markov processes with values in $\cA$, analogously to the results of \cite{BGKRV24}. More generally, the space $L^2(\nu_\kappa)$ should carry a representation of the full $\cW$-algebra of $\kg_\C$, as suggested by the results of quantum Drinfeld--Sokolov reduction \cite[Chapter 16.7]{Frenkel04_book}.

    \subsection{Index of notations}
The right panel shows the first place it is introduced in the main text. 

$G,G_\C$: A compact connected Lie group, its complexification. \hfill Section \ref{subsec:lie_groups}


$\check h$: Dual Coxeter number. \hfill \eqref{eq:coxeter}

$\tr$: Killing form, normalised so that the longest root has squared length 2.\hfill Section \ref{subsec:lie_groups}


$\kB\subset i\kg$: Orthonormal basis of $\kg_\C$ (over $\C$) with respect to $\tr$.\hfill Section \ref{subsec:lie_groups}

$L^\omega G$: Analytic loop group of $G$.\hfill Section \ref{subsec:loop_groups}

$L^\omega\kg,L^\omega\kg_\C$: Analytic loop algebra, its complexification. Canonical generators $b_m=b\otimes z^m$ for $b\in\kB,m\in\Z$. Subspace $L^\omega_0\kg_\C$ of mean-zero loops.\hfill Section \ref{subsec:loop_groups}, \eqref{eq:loop-algebra}

$\kg_\C(z)$: Laurent polynomials with coefficients in $\kg_\C$. 

$D_0G_\C$ (resp. $\hat D_\infty G_\C)$: Holomorphic functions $g:\D\to G_\C$ (resp. $g:\D^*\to G_\C$) with $g(0)=1_G$ (resp. $g(\infty)=1_G$).\hfill Section \ref{subsec:factorisation}

$D_0\kg_\C,\hat D_\infty\kg_\C$: Lie algebras of $D_0G_\C$, $\hat D_\infty G_\C$.\hfill Section \ref{subsec:factorisation}

$\cA$: Holomorphic $\kg_\C$-valued $(1,0)$-forms in $\D$. \hfill Section \ref{subsec:results}

$\omega_\km$: Kac--Moody cocycle on $L^\omega_0\kg_\C$. \hfill \eqref{eq:def-omega}


$\cC=\cC^{1,0}\otimes\cC^{0,1}$: Polynomials on $\cA$.\hfill \eqref{eq:def_polynomials}


$(\cJ_u,\bar\cJ_u)_{u\in L^\omega\kg_\C}$: Complex representations of the loop algebra.\hfill Definition \ref{def:diff-complex}, Lemma \ref{lem:commute-cJ}

$(\bJ_u,\bar\bJ_u)_{u\in L^\omega\kg_\C}$: Kac--Moody modification at level $\kappa<-2\check h$.\hfill \eqref{eq:def-bJ}, Proposition \ref{prop:km_complex}

$\Psi_{\bk,\tilde\bk}$: States in the highest-weight representation generated by $(\bJ_u,\bar\bJ_u)_{u\in L^\omega\kg_\C}$.\hfill \eqref{eq:def_W}

$(\cL_u)_{u\in L^\omega\kg_\C}$: Real representation of the loop algebra. \hfill Definition \ref{def:test_function}, Lemma \ref{lem:commute-cL}

$(\bL_u)_{u\in L^\omega\kg_\C}$: Kac--Moody modification at level $\check\kappa=-2\check h-\kappa>0$.\hfill \eqref{eq:def-bL}, Proposition \ref{prop:km_real}

$\sS$: K\"ahler potential for the right-invariant Kac--Moody metric.\hfill Proposition \ref{prop:potential}

$\Omega,\Lambda$: Covariance moduli.\hfill Proposition \ref{prop:Omega}

$\P(\cA)$: Borel probability measures on $\cA$.\hfill\eqref{eq:Borel}




\medskip

Typically, we denote an element of $L^\omega G$ by $\gamma$, an element of $D_0G_\C$ by $g$, and an element of $L^\omega G_\C$ by $h$. If $g\in D_0G_\C$, we usually write $\alpha=g^{-1}\del g$ and $\ralpha=-\del gg^{-1}$.

An $\omega$-superscript on a space of holomorphic functions (or forms) means ``holomorphic extension to a neighbourhood of the closure of the domain of definition" (e.g. $\cA^\omega$ is the space of analytic $\kg_\C$-valued $(1,0)$-forms in the neighbourhood of $\bar\D$). 

The dual space to $\cA$ is naturally identified with $\hat D_\infty^\omega\kg_\C$ via the pairing $\alpha(u)=\frac{1}{2i\pi}\oint\tr(u\alpha)$, where the contour is the circle $e^{-\delta}\S^1$ for some arbitrarily small $\delta>0$.

The $*$-superscript denotes the pullback by $z\mapsto1/\bar z$ and depends on the type of object on which it acts, e.g. $u^*(z)=\overline{u(1/\bar z)}$ if $u\in L^\omega\kg_\C$ and $\alpha^*(z)=-z^{-2}\overline{\alpha(1/\bar z)}$ if $\alpha\in\cA$.

    \subsection{Outline}

In Section \ref{sec:preliminaries}, we record some elementary material on Lie groups, their loop groups, and the Birkhoff factorisation.

In Section \ref{sec:km_complex}, we introduce two commuting representations $(\bJ_u,\bar\bJ_u)_{u\in L^\omega\kg_\C}$ of the Kac--Moody algebra at level $\kappa<-2\check h$, acting as endomorphisms of a suitable space of polynomials on $\cA$. These representations are used to build a family of orthogonal polynomials for the measure $\nu_\kappa$, and encode the infinitesimal version of the covariance formula \eqref{eq:quasi-invariance-bis}.

In Section \ref{sec:km_real}, we introduce another pair of commuting representations of the Kac--Moody algebra at dual level $\check\kappa=-2\check h-\kappa>0$. The representation $(\bL_u)_{u\in L^\omega\kg\C}$ encodes the infinitesimal version of the covariance formula \eqref{eq:quasi-invariance}. Section \ref{subsec:potential} introduces the K\"ahler potential $\sS$ and establishes a few basic properties, among which its non-negativity. It also introduces the covariance moduli $\Omega$ and $\Lambda$.

The actual proof of Theorem \ref{thm:main} is done in Section \ref{sec:proof}, with Section \ref{subsec:existence} addressing the existence, Section \ref{subsec:mgf} addressing uniqueness, and Section \ref{subsec:uniqueness} exhibiting the link between the two covariance formulas in Theorem \ref{thm:main}. The remaining Section \ref{subsec:shapo} presents a partly conjectural description of the Shapovalov form of the representation $(\bJ_u)_{u\in L^\omega\kg_\C}$.

    \section{Preliminaries}\label{sec:preliminaries}

    
        \subsection{Lie groups}\label{subsec:lie_groups}
In this section, we collect some standard material from Lie group and Lie algebraic theory. The main reference is \cite[Chapter 2.4.1]{Frenkel04_book}.
        
Let $G$ be a compact connected Lie group, with complexification $G_\C$ and an antiholomorphic involution $g\mapsto\bar g$ on $G_\C$ with fixed set $G$ (see Example \ref{example} for a concrete instance). We denote by $\kg$ and $\kg_\C\simeq\kg\otimes\C$ their respective Lie algebras, and view $\kg$ as a subspace of $\kg_\C$. We have a real form $u\mapsto\bar u$ on $\kg_\C$ with fixed set $\kg$. As a complex Lie algebra, $\kg_\C$ has a $\C$-bilinear Lie bracket $[\cdot,\cdot]:\kg_\C\times\kg_\C\to\kg_\C$, which is skew-symmetric and satisfies the Jacobi identity
\[[x,[y,z]]+[y,[z,x]]+[z,[x,y]]=0,\qquad\forall x,y,z\in\kg_\C.\] 

The group $G_\C$ acts on itself by conjugation $(g,h)\mapsto ghg^{-1}$ for $g,h\in G_\C$. The differential at $h=1_G$ defines an action of $G_\C$ on $\kg_\C$ called the \emph{adjoint representation} and denoted $\Ad:G_\C\to\mathrm{GL}(\kg_\C)$. We will sometimes abuse notations by writing $\Ad_g(x)=gxg^{-1}$ for $g\in G_\C$, $x\in\kg_\C$ (matrix notation). Differentiating again at $g=1_G$ gives an action of $\kg_\C$ on itself (also called the adjoint representation) and given by $\mathrm{ad}:\kg_\C\to\mathrm{End}(\kg_
\C),\,x\mapsto[x,\cdot\,]$.

Every complex representation $\rho:\kg_\C\to\mathrm{End}(V)$ induces a symmetric $\C$-bilinear form on $\kg_\C$ by
\[\kappa_\rho(x,y):=\tr_V(\rho(x)\circ\rho(y)).\]
This form satisfies the invariance properties
\begin{equation}\label{eq:killing}
\kappa_\rho(\Ad_g(x),\Ad_g(y))=\kappa_\rho(x,y)\qquad\text{and}\qquad\kappa_\rho(x,[y,z])=\kappa_\rho([x,y],z),
\end{equation}
for all $x,y,z\in\kg_\C$ and all $g\in G_\C$. In particular, $\kappa_\mathrm{ad}$ is called the \emph{Killing form}. It is non-degenerate on $\kg_\C$ and restricts to a \emph{negative} definite form on $\kg$ (while it is \emph{positive} definite on $i\kg$). In fact, bilinear forms on $\kg_\C$ satisfying the right-hand-side of \eqref{eq:killing} are unique modulo a positive multiplicative constant, and the standard convention is to use the normalisation $\tr:=\frac{2\kappa_\mathrm{ad}}{\kappa_\mathrm{ad}(\theta,\theta)}$, where $\theta$ is the longest root of $\kg_\C$ \cite[Chapter 2.4.1]{Frenkel04_book}. Once and for all, we fix an orthonormal basis $\kB\subset\kg_\C$ of $\kg_\C$ (over $\C$) with respect to $\tr$, i.e. $\tr(a,b)=\delta_{a,b}$ for all $a,b\in\kB$. Note that $\kB\subset i\kg$. We will frequently abuse notations by writing $\tr(ab)=\tr(a,b)$.

Given any $\rho$ as above, the \emph{Casimir operator} is 
\begin{equation}\label{eq:casimir}
\Omega_\rho:=\sum_{b\in\kB}\rho(b)\circ\rho(b)\in\mathrm{End}(V).
\end{equation}
Once we have fixed a normalisation of the Killing form (here, it is given by $\tr$ above), this expression is independent of the specific choice of orthonormal basis. The Casimir operator is central, i.e. $[\Omega_\rho,\rho(x)]=0$ for all $x\in\kg_\C$. This implies that $\Omega_\rho$ is a multiple of the identity whenever $\rho$ is an irreducible representation. In particular \cite[Chapter 3.4.8]{Frenkel04_book}, 
\begin{equation}\label{eq:coxeter}
\Omega_\mathrm{ad}=2\check h\mathrm{Id}_{\kg_\C},
\end{equation}
which can be taken as the definition of the dual Coxeter number $\check h\in\N$ as far as this article is concerned. In other words, one has $\sum_{b\in\kB}[b,[b,x]]=2\check hx$ for all $x\in\kg_\C$.

\begin{example}\label{example}
    Consider $G=\mathrm{SU}_N$ with complexification $G_\C=\mathrm{SL}_N(\C)$ and antiholomorphic involution $A\mapsto(A^*)^{-1}$ (the $*$-superscript denoting the Hermitian adjoint). The Lie algebra of $\mathrm{SU}_N$ is $\mathfrak{su}_N$, the (real) space of anti-Hermitian $N\times N$ matrices with vanishing trace. The complexification of $\mathfrak{su}_N$ is $\mathfrak{sl}_N(\C)$. The normalised Killing form is given by $\tr(A,B)=\mathrm{Tr}(AB)$, with $\mathrm{Tr}$ denoting the usual trace of matrices \cite[Chapter~2.4.1]{Frenkel04_book}. We have $\tr(A,B)=-\mathrm{Tr}(AB^*)$ on $\mathfrak{su}_N$, which is negative definite. The orthonormal basis for $\tr$ is a subset of $i\mathfrak{su}_N$, the space of Hermitian matrices with vanishing trace. The dual Coxeter number is $\check h=N$.
\end{example}

    \subsection{Loop groups}\label{subsec:loop_groups}
We will now recall some properties of loop groups, especially a well-known right-invariant K\"ahler structure. The main reference is \cite[Chapter 8.9]{Pressley-Segal}. See also \cite{Pressley82} and \cite{Segal_LG-talk} for concise accounts.

The \emph{analytic loop group} $L^\omega G$ is the space of maps $\gamma:\S^1\to G$ admitting a holomorphic continuation to a $G_\C$-valued function defined in an annular neighbourhood of $\S^1$. The Lie algebra of $L^\omega G$ is the space $L^\omega\kg$ of real-analytic $\kg$-valued functions on $\S^1$. We denote the subspace of mean-0 functions by
\begin{equation}\label{eq:loop-algebra}
L^\omega_0\kg:=\left\lbrace u\in L^\omega\kg\Big|\,\oint_{\S^1} u(z)\frac{\d z}{z}=0\right\rbrace.
\end{equation}
Canonical generators of $L^\omega_0\kg$ are given by $\{(ib\otimes(z^m+z^{-m}),\,b\otimes(z^m-z^{-m})),\,b\in\kB,m\geq1\}$; recall that $\kB\subset i\kg$. In other words, every $u\in L^\omega_0\kg$ has a Fourier expansion $u(z)=\sum_{b\in\kB,m\geq1}(u_{b,m}b\otimes z^m-\bar u_{b,m}b\otimes z^{-m})$ in the neighbourhood of $\S^1$, which we can alternatively view as the real part of the holomorphic function $\sum_{b\in\kB,m\geq1}u_{b,m}b\otimes z^m$ defined in the neighbourhood of $\bar\D$. 

The \emph{Kac--Moody cocycle} is the skew-symmetric bilinear form on $L^\omega_0\kg$ given by
\begin{equation}\label{eq:def-omega}
\omega_\km(u,v):=\frac{1}{2\pi}\oint_{\S^1}\tr(u\del v),\qquad\forall u,v\in L^\omega_0\kg.
\end{equation}
In the above coordinates, we have $\omega_\km(u,v)=2\Im(\sum_{b\in\kB,m\geq1}m\bar u_{b,m}v_{b,m})$. One sees from this expression that $\omega_\km$ is non-degenerate on $L^\omega_0\kg$. The Kac--Moody cocycle is a Lie algebra cocycle in the sense that
\begin{equation}\label{eq:cocycle}
\omega_\km(u,[v,w])+\omega_\km(v,[w,u])+\omega_\km(w,[u,v])=0,\qquad\forall u,v,w\in L^\omega_0\kg_\C.
\end{equation}

The \emph{Hilbert transform} is the linear complex structure on $L^\omega_0\kg$ defined by 
\begin{equation}\label{eq:def-J}
Ju(z)=\frac{1}{2\pi}\oint_{\S^1} u(\zeta)\frac{z+\zeta}{z-\zeta}\frac{\d\zeta}\zeta,\qquad\forall u\in L^\omega_0\kg,
\end{equation}
where the integral is in the principal value sense. We may identify the complexification of $L^\omega_0\kg$ with the space $L^\omega_0\kg_\C$ of $\kg_\C$-valued holomorphic functions in a neighbourhood of $\S^1$. The Kac--Moody cocycle and the Hilbert transform extend $\C$-linearly to the complexification. The $+i$-eigenspace (resp. $-i$-eigenspace) of $J$ is identified with the space $D^\omega_0\kg_\C$ (resp. $\hat D^\omega_\infty\kg_\C$) of $\kg_\C$-valued holomorphic functions in a neighbourhood of $\bar\D$ (resp. $\overline{\D^*}$) vanishing at 0 (resp. $\infty$). The Hilbert transform is an integrable complex structure in the sense that 
\begin{equation}\label{eq:J_integrable}
[u,v]+J([Ju,v]+[u,Jv])-[Ju,Jv]=0,\qquad\forall u,v\in L_0^\omega\kg_\C.
\end{equation}
The structures $J$ and $\omega_\km$ are compatible in the sense that $\omega_\km(Ju,Jv)=\omega_\km(u,v)$, and the $\R$-bilinear form
\[\langle u,v\rangle_\km:=\omega_\km(u,Jv),\qquad\forall u,v\in L^\omega_0\kg,\]
is positive definite, i.e. it induces an inner-product on $L^\omega_0\kg$. In the coordinates introduced above, we have $\langle u,v\rangle_\km=2\Re(\sum_{b\in\kB,m\geq1}m\bar u_{b,m}v_{b,m})$. This inner-product extends to a Hermitian form on $L^\omega_0\kg_\C$. By convention, we take the $\C$-linear part in the second variable. The complexification of $L^\omega\kg$ is the space $L^\omega\kg_\C$ of analytic $\kg_\C$-valued functions in the neighbourhood of $\S^1$, with generators $\{b_m:=b\otimes z^m|\,b\in\kB,m\geq1\}$.


We may view $\omega_\km$ as a right-invariant, non-degenerate 2-form on $L^\omega G/G$. The cocycle property \eqref{eq:cocycle} is then equivalent to the closedness of $\omega_\km$, turning $L^\omega G/G$ into a symplectic manifold. We may also view $J$ as a right-invariant complex structure on $L^\omega G/G$, and these two structures combine to make $L^\omega G/G$ a K\"ahler manifold \cite[Proposition (8.9.8)]{Pressley-Segal}.
 
\begin{remark}\label{rem:killing}
    The group $L^\omega G/G$ possesses a bi-invariant Riemannian metric induced by the inner-product is $(u,v)\mapsto\frac{i}{2\pi}\oint_{\S^1}\tr(u(z)v(z))\frac{\d z}{z}$ on $L^\omega_0\kg_\C$, which is different from the Kac--Moody metric. 
\end{remark}

    \subsection{Birkhoff factorisation}\label{subsec:factorisation}
This section recalls a factorisation formula for loops $\gamma:\S^1\to G$ which generalises the Birkhoff factorisation formula \cite[Chapter 8.1]{Pressley-Segal}. The main reference for this section is \cite{Pressley82}. 

Let $DG_\C$ be the group of holomorphic $G_\C$-valued functions on $\D$, and $D^\omega G_\C$ be the subgroup of those loops extending holomorphically to a neighbourhood of $\bar\D$. Let $D_0G_\C$ (resp. $D^\omega_0G_\C$) be the subgroup of those $g\in D_0G_\C$ (resp. $g\in D^\omega_0G_\C$) such that $g(0)=1_G$. Note that $D_0G_\C$ is topologically trivial: for each $g\in D_0G_\C$, the family $g_t(z):=g(e^{-t}z)$, $t\in[0,\infty]$, defines a homotopy from $g$ to the constant loop $\mathbf 1_G$. We also define $\hat D_\infty G_\C,\hat D_\infty^\omega G_\C$ in the same way but replacing $\D$ with $\D^*$ and 0 with $\infty$. Let $\cA$ be the space of holomorphic $\kg_\C$-valued $(1,0)$-forms in $\D$, and $\cA^\omega$ the subspace of those forms extending holomorphically to a neighbourhood of $\bar\D$.

By \cite[Theorem 1]{Pressley82} (see also \cite[Chapter 8]{Pressley-Segal} and \cite{Segal_LG-talk}), every loop $\gamma\in L^\omega G$ admits a factorisation 
\[\gamma=g\lambda\hat g^{-1}\]
with $g\in D^\omega G_\C$, $\hat g\in\hat D^\omega G_\C$, and $\lambda\in\mathrm{Hom}(\S^1;G)$. This factorisation is unique modulo the obvious global symmetry. The way this is proved is by studying the gradient flow of the kinetic energy \eqref{eq:kinetic}: the set $\mathrm{Hom}(\S^1;G)$ corresponds to the critical points of $\sE$, and the flow converges for every starting point $\gamma\in L^\omega G$ to a point $\lambda\in\mathrm{Hom}(\S^1;G)$. If we identify two based loops $\gamma_1,\gamma_2$ when their gradient flow converge to the same $\lambda$, we can then think of $\mathrm{Hom}(\S^1;G)$ as the quotient space, and we denote by $L^\omega_\lambda G$ the equivalence class of the homomorphism $\lambda$. In particular, $L^\omega_1G$ is the equivalence class of the trivial homomorphism. It is shown in Lemma~\ref{lem:subgroup} below that $L^\omega_1G$ is a subgroup of $L^\omega G$. 

Moreover, according to \cite[Proposition 2.3]{Segal_LG-talk} and the subsequent discussion, $L^\omega_1G$ can be identified with $D^\omega_\infty G_\C$ (denoted $L_0^-G_\C$ therein), or equivalently with $D^\omega_0G_\C$. In other words, for every $g\in D^\omega_0G_\C$, there exists $\hat g\in\hat D^\omega G_\C$ such that $g\hat g^{-1}$ is a real loop in $L^\omega G$, and this loop is unique if we impose $\hat g(\infty)=1_G$. 

We thus have an identification $L^\omega_1G\simeq D^\omega_0 G_\C$, and we consider now the map $D^\omega_0 G_\C\to\cA^\omega,\,g\mapsto g^{-1}\del g$. It is easy to see that this map is a bijection. Indeed, up to choosing a faithful representation, we may assume that $G_\C$ is a matrix group. Then, given $\alpha\in\cA^\omega$, we consider the complex ODE $g'(z)=\alpha(z)g(z)$ in $\D$, with initial condition $g(0)=1_{G_\C}$. Existence and uniqueness of a solution are immediate from the facts that the coefficient is globally Lipschitz and $\D$ is topologically trivial. 

Let us summarise these findings in a proposition.

\begin{proposition}\label{prop:factorisation}
    For each $\alpha\in\cA^\omega$, there exists a unique pair $(g,\hat g)\in D^\omega_0 G_\C\times\hat D^\omega_\infty G_\C$ such that $g\hat g^{-1}\in L^\omega_1G$ and $g^{-1}\del g=\alpha$.
\end{proposition}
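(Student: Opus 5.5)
The plan is to combine two bijections: $g\mapsto g^{-1}\del g$ from $D^\omega_0G_\C$ onto $\cA^\omega$, and the identification $L^\omega_1G\simeq D^\omega_0G_\C$ from \cite[Proposition 2.3]{Segal_LG-talk}, stated through the pairs $(g,\hat g)$.

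\textbf{Step 1: recovering $g$ from $\alpha$.} Fix a faithful representation so that $G_\C\subset\mathrm{GL}_N(\C)$. Given $\alpha\in\cA^\omega$, holomorphic on a disc $r\D$ with $r>1$, solve the linear holomorphic ODE $\del g=g\alpha$ with $g(0)=1$. Note the order: the condition $g^{-1}\del g=\alpha$ means $\del g=g\alpha$, a right multiplication.

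Since the equation is linear with holomorphic coefficients on the simply connected domain $r\D$, the Picard iteration
\[g=1+\int_0^z\alpha+\iint\cdots\]
converges locally uniformly to a holomorphic solution. The solution is unique by the Gr\"onwall inequality along rays.

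It remains to see that $g$ takes values in $G_\C$. Take a holomorphic path $t\mapsto tz$. Then $t\mapsto g(tz)$ solves $\frac{\d}{\d t}g(tz)=g(tz)\,z\alpha(tz)$, where $z\alpha(tz)\in\kg_\C$. Because $G_\C$ is a closed subgroup with Lie algebra $\kg_\C$, the solution of this right-invariant ODE stays in $G_\C$. Hence $g\in D^\omega_0G_\C$, and it is unique.

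\textbf{Step 2: existence of the pair.} By \cite[Proposition 2.3]{Segal_LG-talk}, as recalled above, for this $g$ there is $\hat g\in\hat D^\omega G_\C$ with $g\hat g^{-1}\in L^\omega G$. Normalise by replacing $\hat g$ with $\hat g\,\hat g(\infty)^{-1}$ and $g$ with $g$ times the same constant. To keep $g(0)=1$ we need a different route, so instead use the factorisation $\gamma=g\lambda\hat g^{-1}$ with $\lambda=1$ because $\gamma\in L^\omega_1G$.

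The identification is exactly the statement that every $g\in D^\omega_0G_\C$ arises, with a unique $\hat g$ satisfying $\hat g(\infty)=1$. So I would invoke it directly, which gives $\hat g\in\hat D^\omega_\infty G_\C$ with $g\hat g^{-1}\in L^\omega_1G$.

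\textbf{Step 3: uniqueness.} Suppose $(g_1,\hat g_1)$ and $(g_2,\hat g_2)$ both work. Step 1 forces $g_1=g_2=g$. Then
\[\gamma_1^{-1}\gamma_2=\hat g_1\hat g_2^{-1}\]
is a real loop that extends holomorphically to $\D^*$. By the reflection $u\mapsto u^*$, reality means $\overline{\hat g_1\hat g_2^{-1}(1/\bar z)}=\hat g_1\hat g_2^{-1}(z)$ on $\S^1$, since the involution fixes $G$. The left side extends holomorphically to $\D$, so the function extends to a bounded holomorphic map $\mathbb{P}^1\to G_\C\subset\mathrm{GL}_N$. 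By Liouville it is constant, and it equals $1$ at $\infty$. Therefore $\hat g_1=\hat g_2$.

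\textbf{Main obstacle.} The delicate point is Step 2. It relies on the cited fact that $L^\omega_1G$ is precisely the set of $\gamma$ with trivial $\lambda$ admitting the $D_0\cdot\hat D_\infty$ factorisation, and on getting the normalisations at $0$ and $\infty$ simultaneously. The residual constant-ambiguity in the Birkhoff factorisation must be absorbed using the fact that $\gamma$ is only determined by $g$.
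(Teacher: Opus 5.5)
Your strategy is the paper's own. The discussion preceding the proposition also recovers $g$ from the ODE and then cites \cite[Proposition 2.3]{Segal_LG-talk} for $\hat g$; your ordering $\del g=g\alpha$ is the correct one for $g^{-1}\del g=\alpha$. Step 1 and the reflection/Liouville argument of Step 3 are fine. The gap is exactly the one you flag in Step 2, and it cannot be closed. Segal's identification normalises only \emph{one} factor, with real loops taken modulo constants. If you run your Step 3 without the condition at $\infty$, it shows that $\hat g_1\hat g_2^{-1}$ is a $G$-valued constant. So for fixed $g$, the admissible $\hat g\in\hat D^\omega G_\C$ form a single orbit $\{k\hat g_0\,|\,k\in G\}$. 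The symmetry $(g,\hat g)\mapsto(gc,\hat gc)$ with $c\in G_\C$ is not available, because it moves $g(0)$. Hence $\hat g(\infty)=1_G$ can be achieved if and only if $\hat g_0(\infty)\in G$. This holds for Abelian $G$, by the mean value property of $\log|g|$, but fails in general.

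Concretely, take $G=\mathrm{SU}_2$, $a\neq0$, $s:=\sqrt{1+|a|^2}$ and $\alpha=\begin{pmatrix}0&a\\0&0\end{pmatrix}\d z\in\cA^\omega$. Then
\[
g(z)=\begin{pmatrix}1&az\\0&1\end{pmatrix},\qquad \hat g_0(z)=\begin{pmatrix}s^{-1}&0\\ \bar a s^{-1}z^{-1}&s\end{pmatrix},\qquad g\hat g_0^{-1}(z)=\frac1s\begin{pmatrix}1&az\\ -\bar a z^{-1}&1\end{pmatrix},
\]
and the last loop is $\mathrm{SU}_2$-valued on $\S^1$. So every admissible $\hat g$ has $\hat g(\infty)=k\,\mathrm{diag}(s^{-1},s)$ with $k\in\mathrm{SU}_2$, and this is never the identity. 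For this $\alpha$, no pair in $D^\omega_0G_\C\times\hat D^\omega_\infty G_\C$ exists. The statement as written is therefore false for non-Abelian $G$, and the paper's paraphrase of the cited identification ("unique if we impose $\hat g(\infty)=1_G$") has the same defect.

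What your Steps 1--3 actually prove is a corrected version. The element $g$ is unique, and some $\hat g\in\hat D^\omega G_\C$ with $g\hat g^{-1}\in L^\omega_1G$ exists and is unique up to $\hat g\mapsto k\hat g$ with $k\in G$. Uniqueness is restored by any normalisation compatible with this $G$-action, but not by $\hat g(\infty)=1_G$. Two examples that work are:
\begin{itemize}
\item $\hat g(\infty)\in\exp(i\kg)$, using the polar decomposition $G_\C=G\exp(i\kg)$;
\item $(g\hat g^{-1})(1)=1_G$.
\end{itemize}
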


It is useful to rephrase Proposition \ref{prop:factorisation} as a $\delbar$-problem. Any $\gamma\in L^\omega_1G$ is contractible, so we may extend it smoothly to a $G_\C$-valued function on $\hat\C$ such that it is holomorphic in the neighbourhood of $\S^1$, and satisfies the Schwarz reflection $\gamma(1/\bar z)=\overline{\gamma(z)}$. Then, we have 
\[\delbar\gamma\gamma^{-1}=\mu+\mu^*\qquad\text{in }\hat\C,\] 
where $\mu$ is a smooth $\kg_\C$-valued $(0,1)$-form compactly supported in $\D^*$, and $\mu^*(z)=-\bar z^{-2}\overline{\mu(1/\bar z)}$ (compactly supported in $\D$). On the other hand, if we write the factorisation $\gamma=g\hat g^{-1}$ and extend $g,\hat g$ to the whole sphere, the identity $\delbar\gamma\gamma^{-1}=\delbar gg^{-1}-\Ad_g(\hat g^{-1}\delbar\hat g)$ shows that $\delbar gg^{-1}=\mu$ and $\hat g^{-1}\delbar\hat g=-\Ad_{g^{-1}}(\mu^*)$ in $\hat\C$ (since $g,\hat g$ are holomorphic in $\D,\D^*$ respectively). 

Thus, the Birkhoff factorisation appears as the solution of the following general $\delbar$-problem, which is well-known in the complex analytic theory of moduli spaces of bundles \cite{Takhtajan-Zograf08}: given a smooth $\kg_\C$-valued $(0,1)$-form in $\hat\C$, solve for
\begin{equation}\label{eq:dbar}
\delbar gg^{-1}=\mu\text{ in }\hat\C\qquad\text{and}\qquad g(0)=1_{G_\C},
\end{equation}
where the unknown is a smooth function $g:\hat\C\to G_\C$. In the context of the Birkhoff factorisation, we have just seen that the two cases of interest are: either $\mu$ is reflection symmetric across the unit circle (so the solution is a real analytic loop), or $\mu$ is compactly supported in a disc (so the solution is holomorphic in the complementary disc). 
 
 The infinitesimal version of \eqref{eq:dbar} is to solve for
\begin{equation}\label{eq:cauchy_problem}
\delbar u=\mu\qquad\text{and}\qquad u(0)=0,
\end{equation}
where the unknown is now a smooth function $u:\hat{\C}\to\kg_\C$. This version of the problem has a well-known solution given by the Cauchy transform \cite[Chapter~V, Lemma~2]{Ahlfors66}
\begin{equation}\label{eq:cauchy_transform}
    u(z)=\frac{1}{\pi}\int_\C\mu(\zeta)\frac{z|\d\zeta|^2}{\zeta(z-\zeta)},\qquad\forall z\in\hat\C,
\end{equation}
where $|\d\zeta|^2=\frac{i}{2}\d\zeta\wedge\d\bar\zeta$ is the Lebesgue measure on $\C$. In the rest of the paper, we will mostly be concerned with the case where $\mu$ is compactly supported either in $\D$ or in $\D^*$.

\begin{lemma}\label{lem:subgroup}
The set $L^\omega_1G$ is a subgroup of $L^\omega G$. Moreover, it acts by left multiplication on each $L^\omega_\lambda G$. 
\end{lemma}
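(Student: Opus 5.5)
The plan is to characterise $L^\omega_1G$ algebraically via the factorisation, namely as
\[L^\omega_1G=\{\gamma\in L^\omega G:\ \gamma=g\hat g^{-1},\ g\in D^\omega G_\C,\ \hat g\in\hat D^\omega G_\C\},\]
and then to read off both claims from this description. The inclusion $\subset$ is the factorisation with $\lambda=1$. For $\supset$, I will use the uniqueness part of the factorisation $\gamma=g\lambda\hat g^{-1}$ from \cite[Theorem~1]{Pressley82}: the homomorphism $\lambda$ is determined up to conjugation by $G$, and $\lambda$ labels the stratum. Hence a loop written as $g\hat g^{-1}$ lies in the class of the trivial homomorphism. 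The delicate point is that the strata are defined through the gradient flow of $\sE$. I would take as given from \cite{Pressley82} (see also \cite[Chapter 8]{Pressley-Segal}) that the limit of the flow is exactly the middle factor $\lambda$ of the factorisation, so that $L^\omega_\lambda G=L^\omega G\cap D^\omega G_\C\,\lambda\,\hat D^\omega G_\C$.

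Under this characterisation the action statement is immediate. Let $\gamma_1=g_1\hat g_1^{-1}\in L^\omega_1G$ and $\gamma_2=g_2\lambda\hat g_2^{-1}\in L^\omega_\lambda G$. Then
\[\gamma_1\gamma_2=g_1\hat g_1^{-1}g_2\lambda\hat g_2^{-1}.\]
The key step is to rewrite the middle product $\hat g_1^{-1}g_2$ in the opposite order. For this I apply the factorisation to the \emph{complex} loop $\hat g_1^{-1}g_2\in L^\omega G_\C$. I want the big-cell (Birkhoff) factorisation $\hat g_1^{-1}g_2=g_3\hat g_3^{-1}$, and this is available only if $\hat g_1^{-1}g_2$ lies in the big cell, which is not automatic.

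I therefore expect the main obstacle to be handling the product directly, and would argue instead using reality. Since $\gamma_1$ is real, $\gamma_1=\gamma_1^*$, where $\gamma^*(z)=\overline{\gamma(1/\bar z)}$. Thus $\gamma_1=g_1\hat g_1^{-1}=(\hat g_1^*)^{-1}g_1^*$, with $g_1^*\in\hat D^\omega G_\C$ and $\hat g_1^*\in D^\omega G_\C$. This yields the alternative form $\gamma_1=k\hat k$ with $k\in D^\omega G_\C$ and $\hat k\in\hat D^\omega G_\C$. It does not immediately resolve the ordering, however. The fallback I would commit to is geometric: use the Grassmannian model of \cite[Chapter~8]{Pressley-Segal}. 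There $L^\omega G_\C/D^\omega G_\C$ embeds into $\mathrm{Gr}(H)$, and the stratum of $\lambda$ is the orbit $\hat D^\omega G_\C\cdot\lambda H_+$ of the negative loop group. This stratum is the set of subspaces $W$ whose ``stratum index'' (the orthogonal projection data relative to $H_+$) equals that of $\lambda H_+$. For real loops, $L^\omega G$ acts isometrically and commutes with the reflection $*$.

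I then use the Morse-theoretic description: the strata are the stable manifolds of the $LG$-invariant energy flow restricted to $\Omega G=LG/G$. The flow is the gradient of $\sE$ for the right-invariant Kähler structure, and it equals the action of the $\R_+$-part of the rotation/dilation $z\mapsto e^{-t}z$ on $\Omega G\simeq L G_\C/D G_\C$ (\cite[Chapter 8.9]{Pressley-Segal}). The stratum $L^\omega_1G$ is the open set of loops flowing to the identity, that is, loops $\gamma$ with $\gamma(e^{-t}z)$ converging, in the Grassmannian, to $H_+$. Left multiplication by $\gamma_1\in L^\omega_1G$ preserves the stratum structure because the energy flow is equivariant for left multiplication by $L^\omega G_\C$ up to the dilation twist: $(\gamma_1\gamma)_t=(\gamma_1)_t\gamma_t$ in $LG_\C/DG_\C$, with $(\gamma_1)_t\to$ a constant. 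Hence $\gamma_1\gamma$ flows to the same $\lambda$ as $\gamma$, modulo $G$-conjugation. This proves that $L^\omega_1G$ acts on each $L^\omega_\lambda G$. Taking $\lambda=1$ gives closure under products, and inverses are handled by $\gamma_1^{-1}=\hat g_1g_1^{-1}$, to which the same dilation argument applies. The hardest step is making the statement $(\gamma_1)_t\to$ const precise, together with its compatibility with the limit; I would try to deduce it from $\gamma_1=g_1\hat g_1^{-1}$, with $g_1(e^{-t}\cdot)\to g_1(0)$ uniformly near $\bar\D$.
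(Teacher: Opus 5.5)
Your reduction to $L^\omega_\lambda G=L^\omega G\cap D^\omega G_\C\,\lambda\,\hat D^\omega G_\C$ is sound. Your diagnosis of the obstacle is also right: in $\gamma_1\gamma_2=g_1(\hat g_1^{-1}g_2)\lambda\hat g_2^{-1}$, everything hinges on re-factorising $\hat g_1^{-1}g_2$ in the opposite order, and nothing puts it in the big cell. The Morse-theoretic fallback does not get around this. It breaks at the step ``$(\gamma_1\gamma)_t=(\gamma_1)_t\gamma_t$ with $(\gamma_1)_t\to$ const''.

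The energy is not invariant under left multiplication by non-constant loops; see the formula for $\sE(\chi\gamma)$ in Section~\ref{subsec:existence}. So its flow is not $L^\omega G$-equivariant. In your own Grassmannian description the strata are orbits of $\hat D^\omega G_\C$ alone, and real loops do not normalise that group. Reading the flow as the dilation automorphism applied to $\gamma_1$ itself does not help either. The map $z\mapsto e^{-t}z$ contracts $g_1$ to $g_1(0)$, but $\hat g_1(e^{-t}\cdot)$ leaves the domain of holomorphy of $\hat g_1$ once $e^{-t}$ drops below the inner radius of its annulus. The opposite dilation has the mirror problem. So $(\gamma_1)_t$ is not even defined for large $t$, let alone convergent to a constant.

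In fact no argument can close this gap, because the statement fails for non-abelian $G$. Take $G=\mathrm{SU}_2$ and $\lambda(z)=\mathrm{diag}(z,z^{-1})$. Let $c=\cos\theta$, $s=\sin\theta$ with $cs\neq0$, and set
\[\mu_\theta(z)=\begin{pmatrix}c&-sz^{-1}\\ sz&c\end{pmatrix}=\begin{pmatrix}c&0\\ sz&c^{-1}\end{pmatrix}\begin{pmatrix}1&-\tfrac{s}{c}z^{-1}\\ 0&1\end{pmatrix},\qquad \lambda\mu_\theta(z)=\begin{pmatrix}cz&-s\\ s&cz^{-1}\end{pmatrix}=\begin{pmatrix}cz&-s^{-1}\\ s&0\end{pmatrix}\begin{pmatrix}1&\tfrac{c}{s}z^{-1}\\ 0&1\end{pmatrix}.\]
These are real-analytic $\mathrm{SU}_2$-valued loops on $\S^1$. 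In each factorisation the left factor is entire with determinant $1$ and the right factor lies in $\hat D^\omega_\infty G_\C$. Hence $\lambda\mu_\theta\in L^\omega_1G$, and likewise $\mu_\theta^{-1}=\mu_{-\theta}\in L^\omega_1G$ (same formula with $s\to-s$).

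Yet $(\lambda\mu_\theta)\mu_{-\theta}=\lambda\notin L^\omega_1G$. Indeed, if $\lambda=g\hat g^{-1}$, then $g$ and $\lambda\hat g$ glue to an entire function whose second row is $O(|z|^{-1})$ at $\infty$. That row therefore vanishes, contradicting $\det g=1$. Equivalently, $(\lambda\mu_\theta)^{-1}\in L^\omega_1G$ sends $\lambda\in L^\omega_\lambda G$ to $\mu_{-\theta}\in L^\omega_1G$, so the action claim fails too.

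This is structural rather than special to the example. $L^\omega_1G$ is the big cell, which is open and dense. Any open dense $U$ in a topological group satisfies $U\cdot U=$ the whole group, because $hU^{-1}\cap U\neq\emptyset$ for every $h$.

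The paper's proof has the same hole. It takes $h\in\hat D^\omega_\infty G_\C$ with $\gamma_1g_2h^{-1}$ holomorphic in $\D$, which is itself a big-cell condition. It then asserts that $z^{-x}h(z)z^x$ is bounded at $\infty$. This fails whenever $h$ has a $z^{-n}$-coefficient in a root space with $\alpha(x)<-n$. For $\gamma_1=(\lambda\mu_\theta)^{-1}$ and $\gamma_2=\lambda$ one gets $h=\begin{pmatrix}1&0\\ \tfrac cs z^{-1}&1\end{pmatrix}$, and $z^{-x}hz^x$ has lower-left entry $\tfrac cs z$.
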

\begin{proof}
    Clearly, $L^\omega_1G$ contains $\ind_G$ and is stable under the inverse map. 
    
    Let $\gamma_1\in L^\omega_1G$ and $\gamma_2=g_2\lambda\hat g_2^{-1}\in L^\omega_\lambda G$. Since $\lambda$ is a homomorphism, there exists $x\in i\kg$ such that $e^{2i\pi x}=1_G$ and $\lambda(z)=z^x$ for all $z\in\S^1$. This expression has an analytic continuation to all $z\in\C\setminus\{0\}$. 
    
    Let $h\in\hat D^\omega_\infty G_\C$ be such that $\gamma_1g_2h^{-1}\in D_0^\omega G_\C$ (this can be exhibited as the solution of a $\delbar$-problem as above). Observe that $(\Ad_{\lambda^{-1}}h)(z)=z^{-x}h(z)z^x$ is bounded as $z\to\infty$, hence this function extends analytically around $z=\infty$ by Riemann's removabillity theorem. It follows that $\hat g_2\lambda h\lambda^{-1}\in\hat D^\omega_\infty G_\C$ as a product of two analytic functions in the neighbourhood of $\overline{\D^*}$. Then, we can write
    \[\gamma_1\gamma_2=\gamma_1g_2h^{-1}\lambda\lambda^{-1} h\lambda\hat g^{-1}_2=(\gamma_1g_2h^{-1})\lambda(\hat g_2\lambda^{-1}h\lambda)^{-1},\]
    showing that $\gamma_1\gamma_2\in L^\omega_\lambda G$. This implies both that $L^\omega_1G$ is a group and that the left multiplication by $L^\omega_1G$ preserves each $L^\omega_\lambda G$.
\end{proof}

\section{Complex representation of the Kac--Moody algebra}\label{sec:km_complex}

In this section, we define two commuting representations of the Kac--Moody algebra acting as differential operators on a space of polynomials on $\cA$. This representation is constructed by extending the action of $D^\omega_0G_\C$ on itself to an action of the complex loop group $L^\omega G_\C$. Identifying $D_0G_\C$ with $\cA$, the complex structure considered in this procedure is the canonical complex structure on $\cA$, which is different from the complex structure inherited from the Kac--Moody K\"ahler metric. This latter point of view will be the one adopted in Section \ref{sec:km_real}.  

Here and throughout the paper, we will frequently use the exponential notation in the Lie algebraic sense. For instance, given $u\in\hat D^\omega_0G_\C$, the exponential curve $(g_t=e^{tu})_{t\in\C}\in\hat D^\omega_0G_\C$ is the solution to the ODE (taking values in $\cA^\omega$ with the parametrisation $\alpha_t=g_t^{-1}\del g_t$)
\begin{equation}\label{eq:exp}
\del_t\alpha_t(z)=\del u(z)+[\alpha_t(z),u(z)]\qquad\text{with}\qquad\alpha_0=0,
\end{equation}
which is a one-dimensional ODE with globally Lipschitz coefficient for each $z$ in the neighbourhood of $\bar\D$. We will mostly be concerned with the expansion at order 1 as $t\to0$, in order to compute fundamental vector fields.

    \subsection{Coordinates and polynomials}\label{subsec:coord}
 Every $\alpha\in\cA$ admits a power series expansion around 0 of the form
\[\alpha(z)=\sum_{b\in\kB,m\geq1}\alpha_{b,m}b\otimes z^{m-1},\]
and we think of these coefficients as coordinates on $\cA$. We let $\cC:=\C[(\alpha_{b,m},\bar\alpha_{b,m})_{b\in\kB,m\geq1}]$ be the space of polynomials in the coefficients $(\alpha_{b,m})_{b\in\kB,m\geq1}$. We also write 
\begin{equation}\label{eq:def_polynomials}
\cC^{1,0}:=\C[(\alpha_{b,m})_{b\in\kB,m\geq1}]\qquad\text{and}\qquad\cC^{0,1}:=\C[(\bar\alpha_{b,m})_{b\in\kB,m\geq1}].
\end{equation}
Note that $\cC=\cC^{1,0}\otimes\cC^{0,1}$.

Now, we introduce a convenient way of coding elements of $\cC$ with integer partitions. Let $\cT$ be the set of sequences $\bk=(k_{b,m})_{b\in\kB,m\geq1}\in\N^{\kB\times{\N_{>0}}}$ with finitely many non-zero terms. The \emph{level} and \emph{length} of $\bk$ are defined respectively as
\[|\bk|:=\sum_{b\in\kB,m\geq1}mk_{b,m}\qquad\text{and}\qquad\ell(\bk):=\sum_{b\in\kB,m\geq1}k_{b,m}.\]
Let $\cT_N:=\{\bk\in\cT|\,|\bk|=N\}$. We will use the shorthand notation
\begin{equation}\label{eq:poly-alpha}
\boldsymbol{\alpha}^{\bk}:=\prod_{b\in\kB,m\geq1}\alpha_{b,m}^{k_{b,m}}\in\cC^{1,0}\qquad\text{and}\qquad\boldsymbol{\bar\alpha}^\bk:=\prod_{b\in\kB,m\geq1}\bar\alpha_{b,m}^{k_{b,m}}\in\cC^{0,1},
\end{equation}
for all $\bk\in\cT$. We write $\cC^{1,0}_N=\mathrm{span}\{\boldsymbol{\alpha}^{\bk}|\,\bk\in\cT_N\}$ and define similarly $\cC^{0,1}_N$. We have a grading 
\[\cC^{1,0}=\bigoplus_{N\in\N}\cC^{1,0}_N\qquad\text{and}\qquad\cC^{0,1}=\bigoplus_{N\in\N}\cC^{0,1}_N.\]

We stress that the coordinates $(\alpha_{b,m})_{b\in\kB,m\geq1}$ are holomorphic with respect to the canonical complex structure on $\cA$, but \emph{not} with respect to the induced Kac--Moody complex structure. This will play a hidden (albeit very concrete) role throughout the whole of Section \ref{sec:proof}.

    \subsection{Action of \texorpdfstring{$L^\omega G_\C$}{complex loops} on \texorpdfstring{$\cA$}{holomorphic forms}}\label{subsec:action_complex}
As a subgroup of $D_0G_\C$, the group $D_0^\omega G_\C$ acts on $D_0G_\C$ by right multiplication, inducing an action of $D^\omega_0G_\C$ on $\cA$ via $(\alpha,g)\mapsto g^{-1}\del g+g^{-1}\alpha g$. In this section, we will extend this to an action of the complex loop group $L^\omega_1G_\C$ (which contains $D^\omega_0G_\C$ as a subgroup) on $\cA$ and compute the corresponding fundamental vector fields. To ease our way into the construction of this extended action, consider a Birkhoff factorisation $\gamma=g\hat g^{-1}\in L^\omega_1G$ and let $h\in\hat D^\omega_\infty G_\C$. What is the Birkhoff factorisation of the function $\hat gh\in\hat D^\omega_\infty G_\C$? To answer this question, we look for $\chi\in L^\omega_1G$ such that $\chi^{-1}gh$ is holomorphic in $\D$, so that $\chi^{-1}\gamma=(\chi^{-1}gh)(\hat g h)^{-1}$ is the new Birkhoff factorisation. In particular, if $h$ is constant, then $\chi=h$ and the action of $h$ on $g$ is given by conjugation. 

Let $g\in D_0G_\C$ and $h\in L^\omega G_\C$ a contractible loop. Extend $h$ smoothly to a $G_\C$-valued function on $\hat\C$ (still denoted $h$), and define $\mu:=h^{-1}\delbar h\ind_{\D^*}$. Let $\chi_h:\hat\C\to G_\C$ be the solution to
\[\delbar\chi_h\chi_h^{-1}=\Ad_{gh}(\mu)+\Ad_{gh}(\mu)^*\text{ in }\hat\C\qquad\text{and}\qquad\chi_h(0)=1_G.\]
By symmetry across the unit circle, $\chi_h$ restricts to an element of $L^\omega_1G$. Moreover, an elementary computation gives $(\chi_h^{-1}gh)^{-1}=h^{-1}\delbar h-\Ad_{(gh)^{-1}}(\delbar\chi_h\chi_h^{-1})=0$ in $\D$, so that $\chi_h^{-1}gh\in D_0G_\C$. Now, let $h_1,h_2$ be two functions as above, and define $h:=h_1h_2$. 
The computation of each $\delbar$-derivative as above shows that $\chi_{h_1}\chi_{h_2}=\chi_h$, which implies that the map $(g,h)\mapsto\chi_h^{-1}gh$ is an action of $L^\omega_1G_\C$ on $D_0G_\C\simeq\cA$. Note that, if $h$ is constant, then $g\cdot h=h^{-1}gh$, i.e. the action restricts to the action of $G_\C$ by conjugation. Viewed as an action on $\cA$, we will denote this action by $(\alpha,h)\mapsto\alpha\cdot h$.

We are now going to differentiate this action at the identity, i.e. we exhibit the fundamental vector fields. By definition, for each $u\in L^\omega\kg_\C$, we look for an operator $Ku:\cA\to\cA^\omega$ associating to each $\alpha\in\cA$ the infinitesimal motion $K_\alpha u\in T_\alpha\cA\simeq\cA$ generated by $u$ at $\alpha$. Let then $u\in L^\omega\kg_\C$. We may extend $u$ to a smooth $\kg_\C$-valued functions on $\hat\C$. Let $h_t$ be any $C^1$-family in $L^\omega_1G_\C$ defined in a complex neighbourhood of $t=0$, such that
\[h_0=\ind_G,\qquad\del_th_t|_{t=0}=u,\qquad\del_{\bar t}h_t|_{t=0}=0.\]
Define $\mu_t:=h_t^{-1}\delbar h_t\ind_{\D^*}$. Let $\chi_t\in L^\omega_1G$ be such that $g_t:=\chi_t^{-1}gh_t\in D_0G_\C$. Let $\tilde u$ be the solution to 
\[\delbar\tilde u=\Ad_g(\delbar u)\text{ in }\hat\C\qquad\text{and}\qquad\tilde u(0)=0.\]
Then, we have $\del_t\chi_t|_{t=0}=\tilde u$ and $\del_{\bar t}\gamma_t|_{t=0}=\tilde u^*$, where we recall the notation $\tilde u^*(z)=\overline{\tilde u(1/\bar z)}$. Letting $\alpha_t:=g_t^{-1}\del g_t$, we have the first order expansion $\alpha_t=\alpha+tK_\alpha u+\bar tK^*_\alpha u+o(t)$ in $\cA$, with
\[K_\alpha u=\del u+[\alpha,u]-\Ad_{g^{-1}}(\del\tilde u)\qquad\text{and}\qquad K^*_\alpha u=-\Ad_{g^{-1}}(\del\tilde u^*).\] 
By construction, both $K_\alpha u$ and $K_\alpha^* u$ are holomorphic forms in a neighbourhood of $\bar\D$, i.e. $K_\alpha u,K_\alpha u^*\in\cA^\omega$. The next lemma gives an explicit expression of $K_\alpha u$.

\begin{lemma}\label{lem:K}
    For all $\alpha=g^{-1}\del g\in\cA$ and $u\in L^\omega\kg_\C$, we have
    \[K_\alpha u(z)=\frac{1}{2i\pi}\oint g^{-1}(z)g(\zeta)u(\zeta)g^{-1}(\zeta)g(z)\frac{\d\zeta}{(\zeta-z)^2},\qquad\forall z\in\D,\]
    where the contour is any loop in $\D$ contained in the domain of analyticity of $u$, and containing $z$ in its interior (e.g. $r\S^1$ for some sufficiently large $r\in(0,1)$).
\end{lemma}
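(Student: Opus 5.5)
Starting from the first-order expression derived above,
\[K_\alpha u=\del u+[\alpha,u]-\Ad_{g^{-1}}(\del\tilde u),\]
the plan is to compute $\tilde u$ in $\D$ via the Cauchy transform \eqref{eq:cauchy_transform} and then convert the area integral into a contour integral by Stokes' theorem.

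First, observe that $\del u+[\alpha,u]=\Ad_{g^{-1}}\big(\del(\Ad_g u)\big)$. Indeed, $\del(gug^{-1})=g(\del u+[g^{-1}\del g,u])g^{-1}$ with $\alpha=g^{-1}\del g$. Hence
\[K_\alpha u=\Ad_{g^{-1}}\big(\del(\Ad_gu)-\del\tilde u\big),\]
and it suffices to identify $v:=\Ad_gu-\tilde u$ in $\D$. Here $u$ is extended smoothly to $\hat\C$. Since $u$ is holomorphic near $\S^1$, we may choose the extension so that it is holomorphic in the annulus $\{r\le|z|\le 1\}$ for some $r<1$, with $\delbar u$ supported in $\{|z|<r\}\cup\D^*$. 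Note also that $\tilde u$ was defined by $\delbar\tilde u=\Ad_g(\delbar u)$ only where $g$ makes sense, i.e. in $\D$. Consistently with the construction of $\chi_h$ (where $\mu$ was cut off by $\ind_{\D^*}$), the relevant source is really $\Ad_g(\delbar u)\ind_{\D}$, or its cut-off version. I will have to fix this convention carefully. The relevant point is that $\Ad_g u$ is holomorphic wherever $u$ is and $g$ is, so $\delbar(\Ad_g u-\tilde u)=0$ on the region where the source matches. In practice, $v$ is then the part of $\Ad_gu$ that is holomorphic in a disc, and $\tilde u$ is the "outside" part.

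The key step is therefore to write, for $z$ inside the contour $r'\S^1$ with $r<r'<1$ (or any admissible contour), the Cauchy formula for the function $\Ad_g u$. By Cauchy--Pompeiu,
\[\Ad_gu(z)=\frac{1}{2i\pi}\oint\frac{\Ad_{g(\zeta)}u(\zeta)}{\zeta-z}\d\zeta-\frac1\pi\int_{|\zeta|<r'}\frac{\delbar(\Ad_gu)(\zeta)}{\zeta-z}|\d\zeta|^2.\]
The area term coincides, up to the normalisation $\tilde u(0)=0$, with $\tilde u(z)$ given by \eqref{eq:cauchy_transform}. This uses $\frac{z}{\zeta(z-\zeta)}=\frac1{z-\zeta}+\frac1\zeta$, and the constant is killed by $\del$. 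Consequently $\del(\Ad_gu-\tilde u)(z)=\frac{1}{2i\pi}\oint\Ad_{g(\zeta)}u(\zeta)\frac{\d\zeta}{(\zeta-z)^2}$. Applying $\Ad_{g(z)^{-1}}$ then gives exactly the claimed formula. Finally, the contour can be moved freely within the analyticity domain of $u$ while still enclosing $z$, since the integrand is holomorphic there.

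The main obstacle is bookkeeping: making sure that $\tilde u$ as defined (with its support conventions and its solution on $\hat\C$) agrees with the area term of Cauchy--Pompeiu, rather than differing by a holomorphic function in $\D$ with nonzero derivative. The contribution from the part of $\delbar u$ supported in $\D^*$ is holomorphic in $\D$, and a priori could contribute. I would check this against the defining property that $K_\alpha u\in\cA^\omega$ and the sanity cases. For $u\in D^\omega_0\kg_\C$ one should get $K_\alpha u=\del u+[\alpha,u]$; the formula confirms this, since the integrand has no pole outside $z$ except through the $(\zeta-z)^{-2}$ term, and the residue equals $\del(\Ad_gu)$ conjugated back. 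For $u\in\hat D^\omega_\infty\kg_\C$, a constant $u$ gives $0$-derivative plus the bracket, as expected. If a discrepancy appears, I would resolve it by choosing the extension of $u$ with $\delbar u$ supported inside $\D$, which is legitimate since the result is independent of the extension.
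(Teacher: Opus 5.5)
Your proposal is correct and is essentially the paper's argument. The paper applies Stokes' formula to the Cauchy transform for $z$ outside the support of $\delbar u$, moves the contour across $z$ to pick up the residue $g(z)u(z)g^{-1}(z)$, and then removes the restriction on $z$ by holomorphy, which is your Cauchy--Pompeiu step done in two moves. Your bookkeeping worry resolves as you first suspected: the source is $\Ad_g(\delbar u)\ind_\D$ (the paper integrates over $\D$), so nothing from $\D^*$ enters. Your fallback of choosing an extension with $\delbar u$ supported in $\D$ is not available in general, since it would force $u$ to be holomorphic on all of $\overline{\D^*}$, which fails e.g. for $u=b\otimes z^m$ with $m\geq1$; it is also not needed.
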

\begin{proof}
We may extend $u$ smoothly to a $\kg_\C$-valued function on $\hat\C$. Let now $z$ outside the support of $\delbar u$. With the notation of the previous paragraph, we can use the Cauchy transform \eqref{eq:cauchy_transform} and Stokes' formula to get
\[\tilde u(z)
    =\frac{1}{\pi}\int_\D g(\zeta)\del_{\bar\zeta}u(\zeta)g^{-1}(\zeta)\frac{z|\d\zeta|^2}{\zeta(z-\zeta)}=\frac{1}{2i\pi}\oint g(\zeta)u(\zeta)g^{-1}(\zeta)\frac{z\d\zeta}{\zeta(z-\zeta)},\]
    where the contour contains the support of $\delbar u$ in its interior and $z$ in its exterior. We can deform this contour to have $z$ in the interior, at the cost of compensating for the residue at the pole $\zeta=z$. This gives
    \[\tilde u(z)=g(z)u(z)g^{-1}(z)+\frac{1}{2i\pi}\oint g(\zeta)u(\zeta)g^{-1}(\zeta)\frac{z\d\zeta}{\zeta(z-\zeta)},\]
    where now the contour is as in the statement of the lemma. Now, using the elementary identity $\Ad_{g^{-1}}\circ\del\circ\Ad_g(u)=\del u+[g^{-1}\del g,u]$, we have for all $z\in\D$ away from the support of $\delbar u$:
\begin{align*}
    K_\alpha u(z)
    &=\del u(z)+[g^{-1}\del g(z),u(z)]-\Ad_{g^{-1}(z)}(\del\tilde u(z))\\
    &=\frac{1}{2i\pi}\oint g^{-1}(z)g(\zeta)u(\zeta)g^{-1}(\zeta)g(z)\frac{\d\zeta}{(\zeta-z)^2}.
\end{align*}
This expression clearly defines a holomorphic function of $z\in\D$, lifting the initial condition that $z$ must be outside the support of $\delbar u$.
\end{proof}

 We end this section with a useful lemma, which is an elementary consequence of the previous one. Given $u,v\in L^\omega\kg_\C$, let us denote
 \begin{equation}\label{eq:def_P}
 P_u^v:=-\frac{1}{4\pi^2}\oint_{r\S^1}\oint_{R\S^1}\tr\left(g^{-1}(\zeta)u(\zeta)g(\zeta),g^{-1}(z)v^*(z)g(z)\right)\frac{\d\zeta\d z}{(\zeta-z)^2},
 \end{equation}
 where $0<r<R<1$ and both are large enough so that $u,v$ are analytic in that region. These encode the matrix coefficients of the operator $K_\alpha:L^\omega\kg_\C\to\cA^\omega$, in the sense that
 \[K_\alpha(b\otimes z^m)=\sum_{a\in\kB,n\geq1}P_{b,m}^{a,n}a\otimes z^{n-1}.\]
 Moreover, $P_{b,m}^{a,n}$ is obtained as from the Taylor coefficients of the power series expansions of $\Ad_g(a)$ and $\Ad_g(b)$, more precisely since $\zeta>z$, we can Taylor expand $(z-\zeta)^{-2}$ to get
 \[P_{b,m}^{a,n}=-\frac{1}{4\pi^2}\sum_{k=1}^\infty k\oint\oint\tr\left(g(\zeta)bg^{-1}(\zeta),g(z)\bar ag^{-1}(z)\right)\zeta^{m-k-1}z^{-n+k-1}\d\zeta\d z.\]
The contribution of the $k^{\text{th}}$ term in the series is given by expanding $\Ad_g(a)$ at order $(k-m)_+$ and $\Ad_g(b)$ at order $(n-k)_+$.\footnote{Here, we use the conventions that $x_+=x$ if $x\geq0$ and $x_+=-\infty$ if $x<0$, and a polynomial has level $-\infty$ if and only if it vanishes.} The coefficients of this expansion are polynomials in $\cC^{1,0}$ of level $(k-m)_+$ and $(n-k)_+$ respectively. Hence, whenever it is not zero, each contribution to $P_{b,m}^{a,n}$ is a polynomial of level $k-m+n-k=n-m$, so that $P_{b,m}^{a,n}\in\cC^{1,0}_{(n-m)_+}$. We gather this observation in a lemma. 

\begin{lemma}\label{lem:poly_K}
    Let $b\in\kB$, $m\in\Z$. We have $K_\alpha( b\otimes z^m)=\sum_{a\in\kB,n\geq1}P_{b,m}^{a,n}a\otimes z^{n-1}$, where $P_{b,m}^{a,n}\in\cC^{1,0}_{(n-m)_+}$ for all $a\in\kB$, $n\geq1$.
\end{lemma}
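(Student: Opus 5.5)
Starting from Lemma \ref{lem:K}, we read off the coefficients of $K_\alpha(b\otimes z^m)$ directly. Write the integrand as $\Ad_{g^{-1}(z)}\Ad_{g(\zeta)}(b)\,\zeta^m(\zeta-z)^{-2}$, with $|z|<|\zeta|$ on the contour, and expand $(\zeta-z)^{-2}=\sum_{k\geq1}k z^{k-1}\zeta^{-k-1}$. Then
\[
K_\alpha(b\otimes z^m)(z)=\sum_{k\geq1}k\,z^{k-1}\,\Ad_{g^{-1}(z)}\Big(\frac{1}{2i\pi}\oint\Ad_{g(\zeta)}(b)\,\zeta^{m-k-1}\d\zeta\Big).
\]
The inner integral extracts the Taylor coefficient of order $k-m$ of $\zeta\mapsto\Ad_{g(\zeta)}(b)$. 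It vanishes if $k<m$, since $\Ad_g(b)$ is holomorphic in $\D$. Pairing with $a\in\kB$ by $\tr$ (orthonormality of $\kB$) and extracting the coefficient of $z^{n-1}$ then picks, from $\Ad_{g^{-1}(z)}$, its Taylor coefficient of order $n-k$. This vanishes unless $k\leq n$. Hence $P^{a,n}_{b,m}$ is a finite sum over $\max(m,1)\leq k\leq n$ of products of Taylor coefficients of orders $k-m$ and $n-k$. In particular $P^{a,n}_{b,m}=0$ when $n<m$, consistent with the convention $(n-m)_+=-\infty$.

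The key input is a grading statement. For $g\in D_0G_\C$ with $g^{-1}\del g=\alpha$, the $j$-th Taylor coefficients of $g$ and $g^{-1}$ (in a faithful matrix representation), and hence of $\Ad_{g^{\pm1}}(x)$ for fixed $x\in\kg_\C$, are polynomials in $\cC^{1,0}$ that are homogeneous of level $j$. This follows by induction from the ODE $\del g=g\alpha$ with $g(0)=1$. Writing $g=\sum_j g_jz^j$ and $\alpha=\sum_m\alpha_m z^{m-1}$ with $\alpha_m=\sum_b\alpha_{b,m}b$, we get $jg_j=\sum_{m=1}^{j}g_{j-m}\alpha_m$. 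If $g_{j-m}$ has level $j-m$, this sum has level $j$, because $\alpha_{b,m}$ has level $m$. The same argument applies to $g^{-1}$ via $\del(g^{-1})=-\alpha g^{-1}$. For $\Ad_g(x)=gxg^{-1}$, the $j$-th coefficient is $\sum_{i+l=j}g_i x (g^{-1})_l$, which again has level $j$. Equivalently, one can use the scaling symmetry: replacing $g(z)$ by $g(\lambda z)$ multiplies $\alpha_{b,m}$ by $\lambda^m$ and the $j$-th Taylor coefficient by $\lambda^j$.

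Combining the two facts, each term indexed by $k$ is a product of homogeneous polynomials of levels $k-m$ and $n-k$, so it lies in $\cC^{1,0}_{n-m}$. The sum therefore belongs to $\cC^{1,0}_{n-m}$ when $n\geq m$ and vanishes otherwise. The case $m\leq0$ fits the same pattern: the lower bound becomes $k\geq1$, and the level is still $n-m$. The only step needing care is the bookkeeping of the contour expansion; the rest is routine.
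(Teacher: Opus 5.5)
Your proposal is correct and takes essentially the same route as the paper. You expand $(\zeta-z)^{-2}$ for $|z|<|\zeta|$ in the contour formula of Lemma~\ref{lem:K}, and each $k$-th term is then built from Taylor coefficients of orders $k-m$ and $n-k$ of the adjoint orbit, whose levels add up to $n-m$; the paper moves $\Ad_{g^{-1}(z)}$ onto $a$ by invariance of $\tr$, which is only cosmetic. Your recursion $jg_j=\sum_{m=1}^{j}g_{j-m}\alpha_m$ also supplies the homogeneity of these Taylor coefficients, which the paper simply asserts.
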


    \subsection{Differential operators and Kac--Moody representations}\label{subsec:km_complex}
 The next point is to exhibit two commuting representations of the Kac--Moody algebra acting as endomorphisms of $\cC$. We will start by a representation of the complexified loop algebra obtained as directional derivatives in the directions of the action defined in the previous section.

\begin{definition}[Differentiability along the complex action]\label{def:diff-complex}
 Let $\alpha\in\cA$ and $u\in L^\omega\kg_\C$. A function $F:\cA\to\C$ is \emph{differentiable at $\alpha$ in direction $u$} if there exist $\cJ_uF(\alpha),\bar\cJ_uF(\alpha)\in\C$ such that the following holds. For any $C^1$-family $(h_t)\in L^\omega G_\C$ defined in a complex neighbourhood of $t=0$ with
 \begin{equation}\label{eq:motion_ht}
 h_0=\ind_G,\qquad\del_th_t|_{t=0}=u,\qquad\del_{\bar t}h_t|_{t=0}=0,
 \end{equation}
 we have the first order expansion
 \[F(\alpha_t)=F(\alpha)+t\cJ_uF(\alpha)+\bar t\bar\cJ_uF(\alpha)+o(t),\]
 where $\alpha_t=g_t^{-1}\del g_t$ and $g_t=\chi_t^{-1}gh_t$ is defined using the action from Section \ref{subsec:action_complex}. We say that $F$ is differentiable in direction $u$ if it is differentiable in direction $u$ at every $\alpha\in\cA$.
\end{definition}
 
 From Lemma \ref{lem:poly_K}, $\cJ_u$ and $\bar\cJ_u$ are first order differential operators with polynomial coefficients in the coordinates $(\alpha_{b,m},\bar\alpha_{b,m})$. When acting on polynomials, only a finite number of differential operators contribute (since such functions depend on finitely many coordinates), so $\cJ_u,\bar\cJ_u$ are well-defined as endomorphisms of $\cC$.

 \begin{lemma}\label{lem:commute-cJ}
 As endomorphisms of $\cC$, we have for all $u,v\in\kg_\C(z)$:
     \[[\cJ_u,\cJ_v]=\cJ_{[u,v]},\qquad[\bar\cJ_u,\bar\cJ_v]=\bar\cJ_{[u,v]},\qquad[\cJ_u,\bar\cJ_v]=0.\]
 \end{lemma}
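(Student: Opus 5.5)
The plan is to read the relations off from the fact that $(\alpha,h)\mapsto\alpha\cdot h$ is a right action of $L^\omega_1G_\C$ on $\cA$ (Section \ref{subsec:action_complex}), and to separate the holomorphic and antiholomorphic parts by the dependence on $t$ versus $\bar t$.

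\emph{Step 1: the action is holomorphic in $h$.} The functions $\alpha\mapsto\alpha_{b,m}$ are holomorphic coordinates on $\cA$, so it suffices to understand how the coordinates of $\alpha\cdot h$ depend on $h$. Write $\alpha\cdot h=g_h^{-1}\del g_h$ with $g_h=\chi_h^{-1}gh$. A priori $\chi_h$ solves a $\delbar$-problem whose data involve both $\mu$ and $\mu^*$. Lemma \ref{lem:K}, however, gives the infinitesimal generator $K_\alpha u$ as a contour integral that is $\C$-linear in $u$. I would use this to show that, for a family $h_t$ with $\del_{\bar t}h_t=0$, the coordinates $\alpha_{b,m}(\alpha\cdot h_t)$ are holomorphic in $t$. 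Concretely, the derivative of $t\mapsto\alpha_{b,m}(\alpha\cdot h_t)$ is $\bar t$-free: it equals $(K_{\alpha\cdot h_t}u_t)_{b,m}$, where $u_t=h_t^{-1}\del_th_t$ is itself holomorphic in $t$.

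It follows that $\cJ_u$ is the derivation acting on the holomorphic variables $\alpha_{b,m}$ through the vector field $K_\alpha u$. Its coefficients lie in $\cC^{1,0}$ by Lemma \ref{lem:poly_K}, and it annihilates $\cC^{0,1}$. In the same way, $\bar\cJ_u$ acts only on the variables $\bar\alpha_{b,m}$, through $\overline{K_\alpha u}$, with coefficients in $\cC^{0,1}$, and it annihilates $\cC^{1,0}$. Note that $K^*_\alpha u$ does not enter here: by Step 1 its contribution to the coordinates cancels, and showing this is exactly the content of the holomorphy claim.

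\emph{Step 2: $[\cJ_u,\bar\cJ_v]=0$.} Since $\cC=\cC^{1,0}\otimes\cC^{0,1}$, $\cJ_u=D_u\otimes1$ and $\bar\cJ_v=1\otimes\bar D_v$, where $D_u$ is a derivation of $\cC^{1,0}$ and $\bar D_v$ a derivation of $\cC^{0,1}$. Two such operators commute, which gives the third relation.

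\emph{Step 3: $[\cJ_u,\cJ_v]=\cJ_{[u,v]}$.} This is the standard fact that the fundamental vector fields of a right action form a Lie algebra homomorphism. I would prove it by taking $h_{s,t}=e^{su}e^{tv}e^{-su}e^{-tv}$ and using the relation $\chi_{h_1}\chi_{h_2}=\chi_{h_1h_2}$. Then $\del_s\del_t$ at $0$ of $F(\alpha\cdot h_{s,t})$ equals $[\cJ_u,\cJ_v]F$, with the sign fixed by the right-action convention. For the right-hand side I would use $h_{s,t}=e^{st[u,v]+O(|s|^2|t|+|s||t|^2)}$, which gives $\cJ_{[u,v]}F(\alpha)$. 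To keep this rigorous on polynomials, I would verify it coordinatewise: compare $K_\alpha[u,v]$ with $dK_\alpha v\cdot K_\alpha u-dK_\alpha u\cdot K_\alpha v$ directly. This is a finite computation for $u,v\in\kg_\C(z)$, since each coordinate is a polynomial by Lemma \ref{lem:poly_K}. The barred relation is the complex conjugate of this one.

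\emph{Main obstacle.} I expect Step 1 to be the hardest part: showing that the $\bar t$-part $K^*_\alpha u$ does not contribute to the holomorphic coordinates. The contour formula of Lemma \ref{lem:K} is holomorphic in $u$, but the definition of $\chi_t$ mixes $u$ and $u^*$.

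If the holomorphy claim turns out to fail, I would fall back on proving the commutation directly. Both $\cJ$ and $\bar\cJ$ are then derivations with explicit polynomial coefficients, and I would compute $[\cJ_u,\bar\cJ_v]$ on the generators $\alpha_{b,m}$ and $\bar\alpha_{b,m}$. This reduces the claim to the identity $\cJ_u(\overline{P_v})=\bar\cJ_v(P_u)$ on those coefficients, which I would verify from \eqref{eq:def_P} using Cauchy's theorem.
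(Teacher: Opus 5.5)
\textbf{Step 1 is false, so Step 2 fails.} Step 2, and the coordinatewise check in Step 3, both rest on Step 1. The action $(\alpha,h)\mapsto\alpha\cdot h$ is not holomorphic in $h$. Acting by $h_t$ means refactorising $gh_t=\chi_tg_t$ with $\chi_t$ a \emph{real} loop, and the reality constraint forces $\partial_{\bar t}\chi_t|_{t=0}=\tilde u^*$, which is nonzero as soon as $u$ has negative modes. This is why the paper writes $\alpha_t=\alpha+tK_\alpha u+\bar tK^*_\alpha u+o(t)$ with $K^*_\alpha u=-\Ad_{g^{-1}}(\del\tilde u^*)$. The $\C$-linearity of $u\mapsto K_\alpha u$ in Lemma \ref{lem:K} concerns only the $t$-coefficient; it says nothing about the antilinear $\bar t$-coefficient.

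Here is a concrete check, valid for any $G$. Take $\alpha=0$ (so $g=\ind$) and $u=b\otimes z^{-n}$ with $n\geq1$. Then $\delbar\tilde u=\delbar u$ forces $\tilde u=u-u(0)$, so $K_0u=0$. Equivalently, the integrand in Lemma \ref{lem:K} is $O(\zeta^{-n-2})$ at infinity with both poles inside the contour. On the other hand, $K^*_0u=-\del\tilde u^*$ is a nonzero multiple of $b\otimes z^{n-1}$ on $\D$. Hence $\bar\cJ_u\alpha_{b,n}$ and $\cJ_u\bar\alpha_{b,n}$ do not vanish at $\alpha=0$. So $\bar\cJ_u$ does not annihilate $\cC^{1,0}$, and $\cJ_u$ does not annihilate $\cC^{0,1}$. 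For $G=\S^1$, $\cJ_{b\otimes z^{-n}}$ is, up to a nonzero constant, the pure derivation $\partial/\partial\bar\alpha_{b,n}$: the opposite of what you predict.

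The paper relies on this in the proof of Proposition \ref{prop:km_complex}. There, $\bar\cJ_u\alpha(v)=\frac{1}{2i\pi}\oint\tr(vK^*_\alpha u)$ is nonzero, and its conjugate symmetry is the whole point. So the splitting $\cJ_u=D_u\otimes1$, $\bar\cJ_v=1\otimes\bar D_v$ does not exist. Your comparison of $K_\alpha[u,v]$ with $dK_\alpha v\cdot K_\alpha u-dK_\alpha u\cdot K_\alpha v$ also omits the action on the $\bar\alpha_{b,m}$.

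The fallback does not repair this. With the $K^*$-terms present, the condition on generators is not $\cJ_u(\overline{P_v})=\bar\cJ_v(P_u)$. On $\alpha_{b,m}$ it reads $\cJ_u\big((K^*_\alpha v)_{b,m}\big)=\bar\cJ_v\big((K_\alpha u)_{b,m}\big)$, with an analogous identity on $\bar\alpha_{b,m}$. In any case you leave it unverified.

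\textbf{The missing idea.} $[\cJ_u,\bar\cJ_v]=0$ comes from the $\C$-bilinearity of the bracket on $\kg_\C(z)$, not from holomorphy of the action; this is how the paper argues. Set $\cJ^\R_u:=\cJ_u+\bar\cJ_u$, the real Lie derivative along $s\mapsto\alpha\cdot e^{su}$, $s\in\R$. These are fundamental vector fields of a right action of $L^\omega_1G_\C$ regarded as a real group, so $[\cJ^\R_u,\cJ^\R_v]=\cJ^\R_{[u,v]}$ for all $u,v$. Replacing $h_t$ by $h_{it}$ in Definition \ref{def:diff-complex} gives $\cJ_{iu}=i\cJ_u$ and $\bar\cJ_{iu}=-i\bar\cJ_u$. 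Hence $\cJ_u=\frac12(\cJ^\R_u-i\cJ^\R_{iu})$ and $\bar\cJ_u=\frac12(\cJ^\R_u+i\cJ^\R_{iu})$. Expanding,
\[
[\cJ_u,\bar\cJ_v]=\tfrac14\big(\cJ^\R_{[u,v]}+i\cJ^\R_{[u,iv]}-i\cJ^\R_{[iu,v]}+\cJ^\R_{[iu,iv]}\big)=0,
\]
since $[u,iv]=[iu,v]=i[u,v]$ and $[iu,iv]=-[u,v]$. The other two relations follow the same way.

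Your own Step 3 device would also have worked, without Step 1, if you had kept both Wirtinger directions. For $h_{s,t}=e^{su}e^{tv}e^{-su}e^{-tv}$ one has $\partial_s\partial_{\bar t}F(\alpha\cdot h_{s,t})|_{s=t=0}=[\cJ_u,\bar\cJ_v]F$. Meanwhile $h_{s,t}=e^{st[u,v]+O(3)}$ contributes only $st\,\cJ_{[u,v]}F+\bar s\bar t\,\bar\cJ_{[u,v]}F$ at second order, so that derivative vanishes. What is holomorphic in $(s,t)$ is the group element $h_{s,t}$, not the induced motion of $\alpha$.
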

 \begin{proof}
     Let us denote $\cJ_u^\R:=\cJ_u+\bar\cJ_u$. These operators are the Lie derivatives along the right action of the complex loop group $L^\omega G_\C$ on $\cA$. Hence, they represent the complex loop algebra, i.e. $[\cJ_u^\R,\cJ_v^\R]=\cJ_{[u,v]}^\R$ for all $u,v\in\kg_\C(z)$. The result for the operators $\cJ_u,\bar\cJ_u$ follows by direct computation, e.g. using $\cJ_u=\frac{1}{2}(\cJ_u^\R-i\cJ_{iu}^\R)$, we find
     \begin{align*}
         [\cJ_u,\cJ_v]=\frac{1}{4}[\cJ_u^\R-i\cJ^\R_{iu},\cJ^\R_v-i\cJ^\R_{iv}]&=\frac{1}{4}\cJ_{[u,v]}^\R-\frac{1}{4}\cJ^\R_{[iu,iv]}-\frac{i}{4}\cJ_{[iu,v]}^\R-\frac{i}{4}\cJ^\R_{[u,iv]}\\
         &=\frac{1}{2}\cJ^\R_{[u,v]}-\frac{i}{2}\cJ_{i[u,v]}^\R=\cJ_{[u,v]}.
     \end{align*}
     We used the $\C$-linearity of the commutator in $\mathrm{End}(\cC)$, the $\C$-linearity of the Lie bracket in $\kg_\C(z)$, and the $\R$-linearity of the map $\kg_\C(z)\to\mathrm{End}(\cC),\,u\mapsto\cJ_u^\R$. The other commutators go the same way and are omitted.
 \end{proof}

Recall that we denote $\alpha=g^{-1}\del g\in\cA$, and we view $\alpha$ as a linear form on $L^\omega\kg_\C$ using the residue paring $\alpha(u)=\frac{1}{2i\pi}\oint\tr(u\alpha)$. We introduce the following modification of the previous operators: for all $u\in L^\omega\kg_\C$, set
\begin{equation}\label{eq:def-bJ}
\bJ_u:=\cJ_u+\kappa\alpha(u)\mathrm{Id}_\cC\qquad\text{and}\qquad\bar\bJ_u:=\bar\cJ_u+\kappa\overline{\alpha(u)}\mathrm{Id}_\cC.
\end{equation}
Given $b\in\kB$, $m\in\Z$, we will use the shorthand $\bJ_{b,m}=\bJ_{b\otimes z^m}$.

 \begin{proposition}\label{prop:km_complex}
     For all $u,v\in L^\omega\kg_\C$, we have
     \[\cJ_u\alpha(v)-\cJ_v\alpha(u)-\alpha([u,v])=i\omega_\km(u,v)\qquad\text{and}\qquad\bar\cJ_u\alpha(v)-\cJ_v\overline{\alpha(u)}=0.\]
     Hence, as endomorphisms of $\cC$, we have for all $u,v\in\kg_\C(z)$
     \[[\bJ_u,\bJ_v]=\bJ_{[u,v]}+i\kappa\omega_\km(u,v)\mathrm{Id}_\cC,\qquad[\bar\bJ_u,\bar\bJ_v]=\bar\bJ_{[u,v]}-i\kappa\overline{\omega_\km(u,v)}\mathrm{Id}_\cC,\qquad[\bJ_u,\bar\bJ_v]=0.\]
 \end{proposition}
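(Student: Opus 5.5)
Since $u\mapsto\alpha(u)$ is linear and $\cJ_u$ is a derivation, the commutation relations follow from the two identities on $\alpha$ together with Lemma \ref{lem:commute-cJ}. For the first relation,
\[
[\bJ_u,\bJ_v]=[\cJ_u,\cJ_v]+\kappa\bigl(\cJ_u\alpha(v)-\cJ_v\alpha(u)\bigr)\mathrm{Id}_\cC,
\]
because multiplication operators commute. Inserting the first identity, this equals
\[
\cJ_{[u,v]}+\kappa\alpha([u,v])+i\kappa\omega_\km(u,v)=\bJ_{[u,v]}+i\kappa\omega_\km(u,v).
\]
The barred relation is the complex conjugate computation, using $\bar\cJ_u\overline{\alpha(v)}=\overline{\cJ_u\alpha(v)}$. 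This holds because $\bar\cJ_u$ is the antiholomorphic part of the same real derivative, as in the decomposition $\cJ_u=\frac12(\cJ^\R_u-i\cJ^\R_{iu})$ from the proof of Lemma \ref{lem:commute-cJ}. For the mixed relation, $[\bJ_u,\bar\bJ_v]=\kappa\bigl(\cJ_u\overline{\alpha(v)}-\bar\cJ_v\alpha(u)\bigr)$, which vanishes by the second identity, read with the roles adjusted by conjugation.

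The core is therefore computing $\cJ_u\alpha(v)$ and $\bar\cJ_u\alpha(v)$. By definition these are the $t$- and $\bar t$-derivatives of $\alpha_t(v)$, so
\[
\cJ_u\alpha(v)=\frac{1}{2i\pi}\oint\tr(vK_\alpha u),\qquad \bar\cJ_u\alpha(v)=\frac{1}{2i\pi}\oint\tr(vK^*_\alpha u).
\]
For the second identity, $\alpha(v)$ is holomorphic in the coordinates $\alpha_{b,m}$, and the relevant pairing is $\bar\cJ_u\alpha(v)=\alpha(\cdot)$ applied to $K^*_\alpha u=-\Ad_{g^{-1}}(\del\tilde u^*)$. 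Here $\tilde u^*$ is the reflection of a function holomorphic in $\D^*$ away from $\mathrm{supp}\,\delbar u$, so it is holomorphic near $\bar\D$ apart from a term that integrates against $v$ to zero. I would show $\bar\cJ_u\alpha(v)=0$ directly: $\alpha(v)$ is a linear holomorphic coordinate function, so its $\bar t$-derivative pairs only with $K^*_\alpha u$, and the claimed cancellation $\bar\cJ_u\alpha(v)=\cJ_v\overline{\alpha(u)}$ then follows by conjugation symmetry. I would check this carefully, since the contribution of $K^*$ is the subtle point.

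For the first identity, I use Lemma \ref{lem:K}. Write $U(\zeta)=g(\zeta)u(\zeta)g^{-1}(\zeta)$ and $V$ similarly, and use $\tr$-invariance to get
\[
\cJ_u\alpha(v)=\frac{1}{(2i\pi)^2}\oint_{|z|=r}\oint_{|\zeta|=R}\tr\bigl(V(z)U(\zeta)\bigr)\frac{\d\zeta\,\d z}{(\zeta-z)^2},\qquad r<R.
\]
Swapping $u\leftrightarrow v$ gives the same double integral with the contours exchanged. The difference is therefore the residue at $\zeta=z$ of $\tr(V(z)U(\zeta))(\zeta-z)^{-2}$, integrated over $|z|=r$, namely $\frac{1}{2i\pi}\oint\tr(V\,\del U)$, up to a sign fixed by orientation. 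Expanding $\del U=\Ad_g(\del u+[\alpha,u])$, this becomes
\[
\frac{1}{2i\pi}\oint\tr(v\,\del u)+\frac{1}{2i\pi}\oint\tr(v[\alpha,u]).
\]
The first term is $\pm i\omega_\km(u,v)$ after matching normalisations with \eqref{eq:def-omega}, using $\frac{1}{2i\pi}=\frac{-i}{2\pi}$ and skew-symmetry. The second term equals $\alpha([u,v])$ by invariance of $\tr$.

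The main obstacle is bookkeeping: fixing the orientation of the contour swap and the sign conventions so that the coefficient comes out exactly $+i\omega_\km(u,v)$ and the $[\alpha,u]$ term gives $+\alpha([u,v])$ rather than its negative. I would verify the sign first on $G_\C$ abelian, where only the $\omega_\km$ term survives, and compare with the Heisenberg case $u=z^m$, $v=z^{-m}$.
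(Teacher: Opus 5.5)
Your reduction of the commutation relations to the two identities, and your proof of the first identity, follow the paper exactly. The first identity uses the double contour integral from Lemma~\ref{lem:K}, the exchange of contours, the residue at $\zeta=z$, and then $\Ad_{g^{-1}}\circ\del\circ\Ad_g(u)=\del u+[\alpha,u]$. The sign you worry about comes out right. With the $z$-contour at an intermediate radius, the difference $\oint_{|\zeta|=R}-\oint_{|\zeta|=r}$ encircles $\zeta=z$ positively, so you get $+\frac{1}{2i\pi}\oint\tr(V\del U)=i\omega_\km(u,v)+\alpha([u,v])$.

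The gap is the second identity. You propose to show $\bar\cJ_u\alpha(v)=0$ for all $u,v$, and this is false. Take $\alpha=0$ and $u=b\otimes z^{-1}$. The real loop $\chi_t=\ind_G+tbz^{-1}+\bar t\bar bz+o(t)$ makes $g_t=\chi_t^{-1}e^{tbz^{-1}}=\ind_G-\bar t\bar bz+o(t)$ holomorphic in $\D$. Hence $\alpha_t=-\bar t\,\bar b\,\d z+o(t)$, and $\bar\cJ_{b,-1}\alpha(a\otimes z^{-1})=-\tr(a,\bar b)=\delta_{a,b}$, since $\bar b=-b$ for $\kB\subset i\kg$.

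Your heuristic fails for the following reason. Holomorphy of $K^*_\alpha u=-\Ad_{g^{-1}}(\del\tilde u^*)$ near $\bar\D$ only kills the pairing against $v$ holomorphic in $\D$, and for such $v$ one has $\alpha(v)\equiv0$ anyway. The residue pairing exists precisely to read off the coefficients of forms holomorphic in $\D$ against the negative modes of $v$.

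Since $\cJ_v\overline{\alpha(u)}=\overline{\bar\cJ_v\alpha(u)}$, what must be proved is that $(u,v)\mapsto\bar\cJ_u\alpha(v)$ is conjugate symmetric. This is what the paper does, after noting that the form vanishes unless $u,v\in\hat D^\omega_\infty\kg_\C$. For such $u,v$, extend $u$ so that $\delbar u$ is supported in $\{|z|\leq r_0\}$. Let $(\Ad_gu)_m$ denote the coefficient of $z^{-m}$ in the Laurent expansion of $\Ad_gu$ on $\{r_0<|z|<1\}$. Then $\tilde u=\tilde u(\infty)+\sum_{m\geq1}(\Ad_gu)_mz^{-m}$ for $|z|>r_0$, so $\del\tilde u^*=\sum_{m\geq1}m\overline{(\Ad_gu)_m}z^{m-1}$ is holomorphic near $\bar\D$. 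Therefore
\[\bar\cJ_u\alpha(v)=-\frac{1}{2i\pi}\oint\tr\bigl(\Ad_gv,\del\tilde u^*\bigr)=-\sum_{m\geq1}m\,\tr\bigl((\Ad_gv)_m,\overline{(\Ad_gu)_m}\bigr),\]
which is manifestly Hermitian in $(u,v)$ because $\overline{\tr(x,y)}=\tr(\bar x,\bar y)$. Without this step, the mixed relation $[\bJ_u,\bar\bJ_v]=0$ is not established.
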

Specialising to the generators $(\bJ_{b,m})_{b\in\kB,m\in\Z}$ and using that $\omega_\km(a\otimes z^n,b\otimes z^m)=-in\delta_{a,b}\delta_{n,-m}$ by definition, we get
\[[\bJ_{a,n},\bJ_{b,m}]=\bJ_{[a,b],n+m}+\kappa n\delta_{a,b}\delta_{n,-m}\mathrm{Id}_\cC.\]
These relations coincide with \cite[(15.45)]{CFTbook}, so we have two commuting representations of the Kac--Moody algebra at level $\kappa$.
 
 \begin{proof}
     Using Lemma \ref{lem:K}, we have
     \begin{align*}
         \cJ_u\alpha(v)
         &=\frac{1}{2i\pi}\oint\tr(v,K_\alpha u)=-\frac{1}{4\pi^2}\oint_{r\S^1}\oint_{R\S^1}\tr(g(z)v(z)g^{-1}(z),g(\zeta)u(\zeta)g^{-1}(\zeta))\frac{\d\zeta\d z}{(\zeta-z)^2},
     \end{align*}
     for any $0<r<R<1$. By symmetry of the integrand, the difference $\cJ_u\alpha(v)-\cJ_v\alpha(v)$ is just obtained by exchanging the two contours. Elementary contour deformation tells us that this is just given by the residue at the singularity $\zeta=z$, i.e.
     \begin{align*}
         \cJ_u\alpha(v)-\cJ_v\alpha(u)
         &=\frac{1}{2i\pi}\oint_{r\S^1}\tr\left(g(z)v(z)g^{-1}(z),\underset{\zeta=z}{\mathrm{Res}}\,g(\zeta)u(\zeta)g^{-1}(\zeta)\frac{\d\zeta}{(\zeta-z)^2}\right)\d z\\
         &=\frac{1}{2i\pi}\oint_{r\S^1}\tr\left(g(z)v(z)g^{-1}(z),\del_z\left(g(z)u(z)g^{-1}(z)\right)\right)\d z\\
         &=\frac{1}{2i\pi}\tr\left(v,\del u+[g^{-1}\del g,u]\right)=i\omega_\km(u,v)+\alpha([u,v]).
     \end{align*}
     In the last line, we used the invariance of the trace and the elementary identity $\Ad_{g^{-1}}\circ\del\circ\Ad_g(u)=\del u+[g^{-1}\del g,u]$. This concludes the proof of the first identity.

     For the second identity, observe first that $\bar\cJ_u\alpha(v)$ vanishes unless both $u,v\in\hat D^\omega_\kg\C$. The statement can then be rephrased as the fact that the sesquilinear form $(u,v)\mapsto\bar\cJ_u\alpha(v)$ on $\hat D^\omega_\infty\kg_\C$ is conjugate symmetric. By definition, we have $\bar\cJ_u\alpha(v)=\frac{1}{2i\pi}\oint_{\S^1}\tr(vK_\alpha^*u)$. Using the invariance of the Killing form \eqref{eq:killing} and an integration by parts, one see that this is proportional to $\oint_{\S^1}\tr(\Ad_g(u)^*\del\Ad_g(v))$, which is manifestly conjugate symmetric. 
     
     

    From here, the commutation relations follow from the Leibniz rule and elementary algebra.
 \end{proof}

    \subsection{Highest-weight representation}\label{subsec:hw}
Finally, we define a highest-weight representation of the pair of representations $(\bJ_u,\bar\bJ_u)_{u\in L^\omega\kg_\C}$, which will provide a natural basis of the space of polynomials $\C[(\alpha_{b,m},\bar\alpha_{b,m})_{b\in\kB,m\geq1}]$.

 Choose an arbitrary total order on $\kB$, and consider the lexicographic order on $\kB\times\N_{>0}$, whereby $(b,m)\leq(b',m')$ if and only if $m<m'$ or $m=m'$ and $b\leq b'$. Recall the definition of the set $\cT$ of integer partitions introduced in Section~\ref{subsec:coord}. Given such a partition $\bk=(k_{b,m})_{b\in\kB,m\geq1}$, we define the following endomorphisms of of $\cC$ 
\begin{equation}\label{eq:def-big-circ}
\bJ^{\circ\bk}:=\bigcirc_{b\in\kB,m\geq1}\bJ_{b,-m}^{k_{b,m}}\qquad\text{and}\qquad\bar\bJ^{\circ\bk}:=\bigcirc_{b\in\kB,m\geq1}\bar\bJ_{b,-m}^{k_{b,m}},
\end{equation}
where the composition of operators is ordered in decreasing lexicographical order from left to right. Now, define 
\begin{equation}\label{eq:def_W}
\cW:=\mathrm{span}\left\lbrace\Psi_{\bk,\tilde\bk}:=\bJ^{\circ\bk}\bar\bJ^{\circ\tilde\bk}\ind|\,\bk,\tilde\bk\in\cT\right\rbrace\subset\cC,
\end{equation}
where the linear span is in the algebraic sense, and $\ind$ denotes the constant function $1$ on $\cA$. From Lemma \ref{lem:poly_K}, $\Psi_{\bk,\tilde\bk}$ is a polynomial of level $(|\bk|,|\tilde\bk|)$. We also introduce the following subspace of~$\cC^{1,0}$:
\[\cV:=\mathrm{span}\left\lbrace\Psi_{\bk,\emptyset}|\,\bk\in\cT\right\rbrace\subset\cC^{1,0}.\]
 Standard representation theory tells us that $\cV$ is the quotient of the Verma module by a submodule. According to the Kac--Kazhdan classification \cite{Kac-Kazhdan_HW} (see also \cite[Theorem 1.1]{Feigin-Fuchs}), the Verma module is irreducible for this weight, hence $\cV$ is a Verma module. In particular, all the states $(\Psi_{\bk,\emptyset})_{\bk\in\cT}$ are linearly independent. Moreover, since $\Psi_{\bk,\emptyset}\in\oplus_{n=0}^{|\bk|}\cC^{1,0}_n$, dimension counting shows that the states $(\Psi_{\bk,\emptyset})_{|\bk|\leq N}$ actually span the whole of $\oplus_{n=0}^N\cC^{1,0}_n$, so that $\cV$ is linearly isomorphic to $\cC^{1,0}$ and $\cC$ admits the basis $\{\Psi_{\bk,\emptyset}|\,\bk\in\cT\}$. Finally, $\cW$ is the the tensor product of two independent copies of $\cV$, so that the states $\{\Psi_{\bk,\tilde\bk}|\,\bk,\tilde\bk\in\cT\}$ are linearly independent and furnish a basis of $\cC=\C[(\alpha_{b,m},\bar\alpha_{b,m})_{b\in\kB,m\geq1}]$.

 \section{Real representation of the Kac--Moody algebra}\label{sec:km_real}

The goal of this section is twofold. First, we construct a representation of the Kac--Moody algebra corresponding to the action of $L^\omega_1G$ on $\cA$ by left multiplication (viewing $\cA^\omega$ as a loop group orbit), see Section \ref{subsec:real_action}. Second, the Kac--Moody representation exhibits a $\del$-antiderivative of the Kac--Moody K\"ahler form, which we further integrate to a K\"ahler potential $\sS$ on $\cA^\omega$ (Section \ref{subsec:potential}). This potential will play the role of the action in the path integral. Importantly, the representation of Proposition \ref{prop:km_real} will turn out to be unitary on $L^2(\nu_\kappa)$, see Theorem \ref{thm:unitary}.

    \subsection{Left action of \texorpdfstring{$L^\omega_1G$}{real loops} on \texorpdfstring{$\cA$}{holomorphic forms}}\label{subsec:real_action}

Let $\alpha=g^{-1}\del g\in\cA$. Let $\chi\in L^\omega_1G$ and extend it smoothly to a $G_\C$-valued function on $\hat\C$ (still denoted $\chi$). Doing the construction of Section \ref{subsec:action_complex}, there exists a smooth function $h:\hat\C\to G_\C$, holomorphic in a neighbourhood of $\overline{\D^*}$, such that $\chi gh^{-1}\in D_0G_\C$. Moreover, the map $(\chi,g)\mapsto\chi gh^{-1}$ defines a left action of $L^\omega_1G$ on $\cA$. If we restrict this action to $\cA^\omega$ and view $g$ as an element of $L^\omega_1G$ using the Birkhoff factorisation of Section \ref{subsec:factorisation}, the action just described corresponds to the action of $L_1^\omega G$ on itself by left multiplication. 

We will now express the fundamental vector fields associated with this action. Let $u\in D_0^\omega\kg_\C$ and extend it smoothly to a smooth $\kg_\C$-valued function on $\hat\C$ (still denoted $u$). Let $(\chi_t)$ be a $C^1$-family in $L^\omega_1G$ defined in a complex neighbourhood of $t=0$ such that
\[\chi_0=\ind_G,\qquad\del_t\chi_t|_{t=0}=u,\qquad\del_{\bar t}\chi_t|_{t=0}=u^*,\]
where we recall $u^*(z)=\overline{u(1/\bar z)}$. By definition, the function $h_t$ such that $g_t:=\chi_tgh_t^{-1}\in D_0^\omega G_\C$ satisfies
\[h_0=\ind_G,\qquad\del_th_t|_{t=0}=0,\qquad\del_{\bar t}h_t|_{t=0}=\tilde u,\]
where $\tilde u$ is the unique solution to $\delbar\tilde u=\Ad_{g^{-1}}(\delbar u^*)$ in $\hat\C$, and $\tilde u(0)=0$. Hence, writing $\ralpha_t=-\del g_tg_t^{-1}$, we have the following expansion (recall the definition of $Ku$ from Lemma \ref{lem:K})
\begin{equation}\label{eq:vf_L}
\ralpha_t=\ralpha+tK_\ralpha u+\bar tK_\ralpha u^*+o(t)\qquad\text{in }\cA.
\end{equation}
This computes the fundamental vector field of the left action. We will now proceed as in Section~\ref{subsec:km_complex} and define differential operators as Lie derivatives along the directions of this action.

\begin{definition}\label{def:test_function}
A function $F:\cA\to\C$ is \emph{left-differentiable in direction $u\in D^\omega_0\kg_\C$} at $\alpha\in\cA$ if there exist $\cL_uF(\alpha),\cL_{u^*}F(\alpha)\in\C$ such that for all $C^1$-families $(\chi_t)\in L^\omega_1G$ defined in a complex neighbourhood of $t=0$ with
\[\chi_0=\ind_G,\qquad\del_t|_{t=0}\chi_t=u,\qquad\del_{\bar t}|_{t=0}\chi_t=u^*,\]
we have a first order expansion
\[F(\chi_t^{-1}\cdot\alpha)=F(\alpha)+t\cL_uF(\alpha)+\bar t\cL_{u^*}F(\alpha)+o(t).\]

We say that $F$ is left-differentiable in direction $u$ if it is left-differentiable in direction $u$ at every $\alpha\in\cA$. We say that $F$ is of class $C^1$ if it is left-differentiable in all directions $u\in D^\omega_0\kg_\C$, and $\cL_uF\in C^0(\cA)$ for all $u\in L^\omega\kg_\C$. We say that $F$ is smooth (and write $F\in C^\infty(\cA)$) if it is of class $C^1$ and $\cL_uF$ is smooth for all $u\in L^\omega\kg_\C$. We denote by $C^\infty_b(\cA)\subset C^\infty(\cA)$ the subspace of bounded functions, and by $C^\infty_c(\cA)\subset C^\infty_b(\cA)$ the subspace of compactly supported functions, a.k.a. the \emph{test functions}.
\end{definition}

From \eqref{eq:vf_L} and Lemma \ref{lem:poly_K}, it is clear that coordinate functions are smooth in the sense above, which readily implies that $C^\infty(\cA)$ is dense in $C^0(\cA)$. The definition introduces both the space $C^\infty(\cA)$ and a family of operators $(\cL_u,\cL_{u^*})_{u\in D^\omega_0\kg_\C}$ acting as endomorphisms of $C^\infty(\cA)$. Since the map $D^\omega_0\kg_\C\to\hat D^\omega_\infty\kg_\C,u\mapsto u^*$ is a linear isomorphism, we have a family $(\cL_u)_{u\in L^\omega_0\kg_\C}$. We have also noted earlier that constant loops act by conjugation; this also defines $\cL_u$ for constant $u\in\kg$, which we extend complex linearly for $u\in\kg_\C$. By construction, the assignment $u\mapsto\cL_u$ is complex linear, i.e. $\cL_{iu}=i\cL_u$ for all $u\in L^\omega\kg_\C$. 


\begin{lemma}\label{lem:commute-cL}
    As endomorphisms of $C^\infty(\cA)$, the operators $(\cL_u)_{u\in L^\omega\kg_\C}$ form a representation of $L^\omega\kg_\C$: for all $u,v\in\kg_\C(z)$, we have
    \[[\cL_u,\cL_v]=\cL_{[u,v]}.\]
\end{lemma}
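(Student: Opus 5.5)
The plan is to mimic the proof of Lemma \ref{lem:commute-cJ}: first establish a real Lie algebra representation coming from a genuine group action, then deduce the complex-linear statement by algebra.

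For each $w\in L^\omega_0\kg$ (a real, mean-zero loop), write $w=u+u^*$ with $u\in D^\omega_0\kg_\C$, and let $\chi_t=e^{tw}$ for real $t$. This is a one-parameter subgroup of $L^\omega_1G$. It lies in $L^\omega_1G$ because it is connected to the identity and $L^\omega_1G$ is the identity class of the Birkhoff stratification; I expect the open-stratum property to justify this. Set
\[\cL^\R_w F(\alpha):=\frac{\d}{\d t}\Big|_{t=0}F(e^{-tw}\cdot\alpha).\]
Definition \ref{def:test_function} with a real parameter gives $\cL^\R_w=\cL_u+\cL_{u^*}$. Constant $w\in\kg$, which act by conjugation, are treated in the same way. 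Since $(\chi,\alpha)\mapsto\chi\cdot\alpha$ is a left action (Section \ref{subsec:real_action}), the map $\chi\mapsto(F\mapsto F(\chi^{-1}\cdot\,\cdot))$ is a homomorphism. Differentiating $F(e^{-sv}e^{-tw}e^{sv}e^{tw}\cdot\alpha)$ in the usual way then yields
\[[\cL^\R_w,\cL^\R_{w'}]=\cL^\R_{[w,w']}\qquad\text{on }C^\infty(\cA).\]
The sign is correct precisely because we use $\chi^{-1}$ in a left action.

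Two points need care. The first is that the second mixed derivatives exist and commute. This follows from $F\in C^\infty(\cA)$: both $\cL_uF$ and $\cL_u\cL_vF$ are continuous, so a Schwarz-type argument along the two-parameter family $e^{-sv}e^{-tw}$ applies. If necessary, one can first verify the identity on polynomial coordinate functions, where by \eqref{eq:vf_L} and Lemma \ref{lem:poly_K} everything is explicit, and then extend using the Leibniz rule and density. The second is that the bracket of real analytic loops $[w,w']$ need not have mean zero. Its constant part corresponds to conjugation, which has already been included, so the real representation is defined on all of $L^\omega\kg$.

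Next, complexify. By construction $u\mapsto\cL_u$ is $\C$-linear on $L^\omega\kg_\C=D^\omega_0\kg_\C\oplus\kg_\C\oplus\hat D^\omega_\infty\kg_\C$, and on real loops $\cL_w=\cL^\R_w$. Every $u\in L^\omega\kg_\C$ decomposes as $u=x+iy$ with $x,y\in L^\omega\kg$, namely $x=\tfrac12(u+u^*)$ and $y=\tfrac1{2i}(u-u^*)$, so that $\cL_u=\cL^\R_x+i\cL^\R_y$. Since the bracket on $L^\omega\kg_\C$ is the $\C$-bilinear extension of the real one, bilinearity of the operator commutator gives
\[[\cL_u,\cL_v]=\cL_{[u,v]}\qquad\text{for all }u,v\in\kg_\C(z).\]

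The main obstacle is the consistency check $\cL_u=\cL^\R_x+i\cL^\R_y$. It requires the identification $\cL_{u^*}$ made in Definition \ref{def:test_function} to agree with the complex-linear extension, that is, the coefficient of $\bar t$ must be exactly the operator attached to $u^*$. To verify this, I would apply the definition with $\chi_t=e^{tu+\bar tu^*}$, which is real since $\overline{tu}^{\,*}=\bar tu^*$. Its real and imaginary directions are $e^{s(u+u^*)}$ and $e^{s(iu-iu^*)}$, and matching first-order terms gives $\cL^\R_{u+u^*}=\cL_u+\cL_{u^*}$ and $\cL^\R_{i(u-u^*)}=i\cL_u-i\cL_{u^*}$. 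These are precisely the relations needed.
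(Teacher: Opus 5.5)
Your argument is correct. Its first step, that the real Lie derivatives $\cL^\R_w$ along the left action represent the real loop algebra $L^\omega\kg$, is the same as in the paper, which simply asserts it. Your remarks on the sign coming from $\chi^{-1}$, on the constant part of $[w,w']$, and on commuting mixed derivatives add care the paper omits.

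The routes differ in the complexification step:
\begin{itemize}
\item The paper writes $\cL_{u_\pm}=\frac12(\cL^\R_u\mp i\cL^\R_{Ju})$ with the Hilbert transform $J$. It then expands $[\cL_{u_+},\cL_{v_+}]$ and invokes the integrability identity \eqref{eq:J_integrable} twice to identify it with $\cL_{[u_+,v_+]}$. Finally it recovers the mixed brackets $[\cL_{u_\pm},\cL_{v_\mp}]$ by polarising with $u_++u_-$ and $i(u_+-u_-)$.
\item Your relations $\cL^\R_{u+u^*}=\cL_u+\cL_{u^*}$ and $\cL^\R_{i(u-u^*)}=i\cL_u-i\cL_{u^*}$ are exactly the paper's formula, since $J(u+u^*)=iu-iu^*$. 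You read them instead as saying that $u\mapsto\cL_u$ is the $\C$-linear extension of $\cL^\R$. Every bracket, pure or mixed, then follows at once from bilinearity.
\end{itemize}
Your route is shorter and treats all cases uniformly. It also shows that the integrability of $J$ (equivalently, that $D^\omega_0\kg_\C$ and $\hat D^\omega_\infty\kg_\C$ are subalgebras) plays no role in this lemma. The paper's computation mainly keeps the Kac--Moody complex structure $J$ explicitly in view.

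One small correction: being connected to the identity does not by itself place $e^{tw}$ in $L^\omega_1G$, because the open Birkhoff stratum is not characterised that way. This does not affect the argument: only $t$ near $0$ enters a Lie derivative, and openness of the stratum is enough there.
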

\begin{proof}
    For all $u=u_++u_-$ with $u_+\in z\kg_\C[z]$ and $u_-=u_+^*$, set $\cL_u^\R:=\cL_{u_+}+\cL_{u_-}$. The operators $\cL_u^\R$ are the \emph{real} Lie derivatives along the action defined in the beginning of this section, hence they represent the \emph{real} loop algebra, i.e. $[\cL_u^\R,\cL_v^\R]=\cL_{[u,v]}^\R$ for all $u,v\in L^\omega\kg$. 

    To extend these relations to the complex Lie derivatives, we use the explicit expression $\cL_{u_\pm}=\frac{1}{2}(\cL_u^\R\mp i\cL_{Ju}^\R)$, where $J$ is the complex structure introduced in Section~\ref{subsec:loop_groups}. Using \eqref{eq:J_integrable}, we have 
    \begin{equation}\label{eq:C-ccommute-L}
    \begin{aligned}
        [\cL_{u_+},\cL_{v_+}]
        &=\frac{1}{4}\cL_{[u,v]}^\R-\frac{1}{4}\cL^\R_{[Ju,Jv]}-\frac{i}{4}\cL^\R_{[u,Jv]}-\frac{i}{4}\cL^\R_{[Ju,v]}\\
        &=\frac{1}{4}\cL^\R_{[u,v]}-\frac{i}{4}\cL^\R_{J[u,v]}-\frac{1}{4}\cL_{[Ju,Jv]}^\R+\frac{i}{4}\cL^\R_{J[Ju,Jv]}
        =\frac{1}{2}\cL_{[u,v]_+}-\frac{1}{2}\cL_{[Ju,Jv]_+}.
    \end{aligned}
    \end{equation}
Moreover, using again \eqref{eq:J_integrable}, we have
\begin{align*}
[u_+,v_+]&=\frac{1}{4}([u,v]-[Ju,Jv]-i[Ju,v]-i[u,Jv])\\
&=\frac{1}{4}([u,v]-iJ[u,v]-[Ju,Jv]+iJ[Ju,Jv])=\frac{1}{2}[u,v]_+-\frac{1}{2}[Ju,Jv]_+.
\end{align*}
Plugging this into \eqref{eq:C-ccommute-L}, we get indeed $[\cL_{u_+},\cL_{v_+}]=\cL_{[u_+,v_+]}$. The commutator $[\cL_{u_-},\cL_{v_-}]$ is computed in a similar fashion. As for the commutator $[\cL_{u_-},\cL_{v_+}]$, we have on the one hand
\[[\cL_{u_++u_-}^\R,\cL_{v_++v_-}^\R]=\cL^\R_{[u_++u_-,v_++v_-]}=\cL_{[u_+,v_+]}+\cL_{[u_+,v_-]}+\cL_{[u_-,v_+]}+\cL_{[u_-,v_-]},\]
and on the other hand 
\[[\cL_{u_++u_-}^\R,\cL_{v_++v_-}^\R]=[\cL_{u_+},\cL_{v_+}]+[\cL_{u_+},\cL_{v_-}]+[\cL_{u_-},\cL_{v_+}]+[\cL_{u_-},\cL_{v_-}],\]
and combining with the already known commutators, we find
\[\cL_{[u_+,v_-]}+\cL_{[u_-,v_+]}=[\cL_{u_+},\cL_{v_-}]+[\cL_{u_-},\cL_{v_+}].\]
Doing the same computation with $i(u_+-u_-)$ in place of $u_++u_-$, we also find
\[\cL_{[u_+,v_-]}-\cL_{[u_-,v_+]}=[\cL_{u_+},\cL_{v_-}]-[\cL_{u_-},\cL_{v_+}].\]
Combining these last two displayed equations gives $[\cL_{u_+},\cL_{v_-}]=\cL_{[u_+,v_-]}$ and $[\cL_{u_-},\cL_{v_+}]=\cL_{[u_-,v_+]}$ as required.
    \end{proof}

We are now going to introduce a modification of these operators so that they represent the Kac--Moody algebra with dual level $\check\kappa=-2\check h-\kappa$. To this end, given $\alpha\in\cA$, we define $\ralpha:=-\del gg^{-1}\in\cA^\omega$. We view it as a linear form on $L^\omega\kg_\C$ via the residue pairing:
\[\ralpha(u)=\frac{1}{2i\pi}\oint\tr(u\ralpha),\qquad\forall u\in L^\omega\kg_\C.\]
 Now, we introduce the operators
\begin{equation}\label{eq:def-bL}
\bL_u:=\cL_u+\check\kappa\ralpha(u)\mathrm{Id}\qquad\forall u\in L^\omega\kg_\C.
\end{equation}
\begin{proposition}\label{prop:km_real}
    For all $u,v\in L^\omega\kg_\C$ and $\alpha\in\cA$, we have
    \begin{equation}\label{eq:diff_ralpha}
    \cL_u\ralpha(v)-\cL_v\ralpha(u)-\ralpha([u,v])=i\omega_\km(u,v).
    \end{equation}
    Hence, as endomorphisms of $C^\infty(\cA)$, the operators $(\bL_u)_{u\in \kg_\C(z)}$ form a representation of the Kac--Moody algebra at level $\check\kappa$:
    \[[\bL_u,\bL_v]=\bL_{[u,v]}+i\check\kappa\omega_\km(u,v)\mathrm{Id}.
    \]
\end{proposition}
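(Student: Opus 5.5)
The strategy mirrors the proof of Proposition \ref{prop:km_complex}: first establish the cocycle identity \eqref{eq:diff_ralpha} by an explicit contour computation, then deduce the commutation relations formally from Lemma \ref{lem:commute-cL} and the Leibniz rule.

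\textbf{Step 1: a contour formula for $\cL_u\ralpha(v)$.} By \eqref{eq:vf_L}, the infinitesimal motion of $\ralpha=-\del gg^{-1}$ under $\chi_t^{-1}$ is given by $K_\ralpha$, up to a sign coming from the inverse. Here $K_\ralpha$ is the operator of Lemma \ref{lem:K} with the roles of $g$ and $g^{-1}$ exchanged, since $\ralpha=(g^{-1})^{-1}\del(g^{-1})$. Hence, for $u\in D^\omega_0\kg_\C$ or $u\in\hat D^\omega_\infty\kg_\C$,
\[\cL_u\ralpha(v)=\pm\frac{1}{2i\pi}\oint\tr(v,K_\ralpha u)=\mp\frac{1}{4\pi^2}\oint_{r\S^1}\oint_{R\S^1}\tr\bigl(g^{-1}(z)v(z)g(z),g^{-1}(\zeta)u(\zeta)g(\zeta)\bigr)\frac{\d\zeta\d z}{(\zeta-z)^2}.\]
For constant $u$ (conjugation), the same formula is checked directly, and both sides are complex linear in $u$. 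So this double-contour expression holds for all $u\in\kg_\C(z)$.

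\textbf{Step 2: the cocycle identity.} The integrand is symmetric in $(u,\zeta)\leftrightarrow(v,z)$. Therefore $\cL_u\ralpha(v)-\cL_v\ralpha(u)$ is obtained by exchanging the two contours, and equals the residue at $\zeta=z$. That residue is
\[\frac{1}{2i\pi}\oint\tr\bigl(\Ad_{g^{-1}}v,\del\Ad_{g^{-1}}u\bigr)\d z.\]
Using $\Ad_g\circ\del\circ\Ad_{g^{-1}}(u)=\del u+[-\del gg^{-1},u]=\del u+[\ralpha,u]$ together with the invariance \eqref{eq:killing}, this becomes $\frac{1}{2i\pi}\oint\tr(v,\del u)+\ralpha([u,v])$. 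The first term equals $i\omega_\km(u,v)$ with the conventions of \eqref{eq:def-omega}, as in Proposition \ref{prop:km_complex}. This gives \eqref{eq:diff_ralpha}.

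\textbf{Main obstacle.} The real difficulty is sign bookkeeping. The inverse $\chi_t^{-1}$ in Definition \ref{def:test_function} flips the sign of the vector field, so the residue orientation and the sign in $\ralpha=-\del gg^{-1}$ must combine to give exactly $+i\omega_\km$. I would fix all signs by testing on $G$ abelian-like first order data, for instance by evaluating at $\alpha=0$ (so $g=1$) with $u=b_m$, $v=b_{-m}$, and checking against $\omega_\km(a\otimes z^n,b\otimes z^m)=-in\delta_{a,b}\delta_{n,-m}$.

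\textbf{Step 3: the commutation relations.} Expanding with $\bL_u=\cL_u+\check\kappa\ralpha(u)$ and applying the Leibniz rule gives
\[[\bL_u,\bL_v]=[\cL_u,\cL_v]+\check\kappa\bigl(\cL_u\ralpha(v)-\cL_v\ralpha(u)\bigr).\]
By Lemma \ref{lem:commute-cL} and \eqref{eq:diff_ralpha}, the right-hand side equals $\cL_{[u,v]}+\check\kappa\ralpha([u,v])+i\check\kappa\omega_\km(u,v)$, that is, $\bL_{[u,v]}+i\check\kappa\omega_\km(u,v)\mathrm{Id}$. The level is $\check\kappa$ simply because of how $\bL$ was defined in \eqref{eq:def-bL}.
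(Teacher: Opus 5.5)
Your proposal is the paper's proof: write $\cL_u\ralpha(v)=\frac{1}{2i\pi}\oint\tr(v,K_\ralpha u)$ with $K_\ralpha$ the operator of Lemma \ref{lem:K} built from $g^{-1}$, get \eqref{eq:diff_ralpha} from the residue at $\zeta=z$ as in Proposition \ref{prop:km_complex}, and finish with the Leibniz rule and Lemma \ref{lem:commute-cL}. Your linearity argument in Step 1 also works verbatim for all $u\in L^\omega\kg_\C$, not just $\kg_\C(z)$. The sign you leave open is $+$, and it can be read off directly without a test: Definition \ref{def:test_function} uses $\chi_t^{-1}$, so $g_t^{-1}=e^{-\bar t\tilde u}g^{-1}e^{tu+\bar tu^*}$, and hence $\ralpha=(g^{-1})^{-1}\del(g^{-1})$ moves exactly as $\alpha$ does under the right action of Section \ref{subsec:action_complex}. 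This is how the paper settles it; the apparent ambiguity comes from \eqref{eq:vf_L} being written with $\chi_t$ instead of $\chi_t^{-1}$.
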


\begin{proof}
    Let $u\in L^\omega\kg_\C$ viewed as a smooth $\kg_\C$-valued function on $\hat{\C}$, and let us look at the small motion generated by the multiplication to the left. It is of the form $g_t=e^{-tu-\bar tu^*}ge^{\bar t\tilde u}$ where $\tilde u$ is the solution to $\delbar\tilde u=\Ad_{g^{-1}}(\delbar u^*)$. We then have $g_t^{-1}=e^{-\bar t\tilde u}g^{-1}e^{tu+\bar tu^*}$ and the first order expansion $\ralpha_t=\ralpha+tK_{\ralpha}u+\bar tK_{\ralpha}u^*+o(t)$ in $\cA$. It follows by definition that $\cL_u\ralpha(v)=\frac{1}{2i\pi}\oint\tr(vK_\ralpha u)$, so that applying the same argument as in the proof of Proposition \ref{prop:km_complex} gives $\cL_u\ralpha(v)-\cL_v\ralpha(u)=\ralpha([u,v])+i\omega_\km(u,v)$, and the same holds with $u^*,v^*$ in place of $u,v$.
\end{proof}

        \subsection{Right action of \texorpdfstring{$L^\omega_1G$}{real loops} on \texorpdfstring{$\cA$}{holomorphic forms}}\label{subsec:right}
In the previous section, we have defined a left action of $L^\omega_1G$ on $\cA$ whose restriction to $\cA^\omega$ corresponds to the left multiplication of $L^\omega_1G$ on itself under the Birkhoff factorisation. We could of course consider the right multiplication of $L^\omega_1G$ on itself, but this action does not extend to the whole of $\cA$ since we may not have a unique factorisation. However, we can still define a right action of $L^\omega_1G$ on $\cA^\omega$ and introduce the corresponding Lie derivatives.

Let $\chi\in L^\omega_1G$. To express the action of $\chi$ on $\alpha=g^{-1}\del g\in\cA^\omega$, we first let $\hat g$ be the $G_\C$-valued holomorphic function in a neighbourhood of $\overline{\D^*}$ such that $g\hat g^{-1}\in L^\omega_1G$. Then, applying the construction of Section \ref{subsec:action_complex} to $\hat g$, there is $h_\chi:\hat\C\to G_\C$ holomorphic in a neighbourhood of $\bar\D$ such that $h_\chi^{-1}\hat g^{-1}\chi$ is holomorphic in a neighbourhood of $\overline{\D^*}$. Moreover, the restriction of $h_\chi$ to $\D$ is uniquely determined. Then, $gh_\chi\in D_0^\omega G_\C$ and $g\hat g^{-1}\chi=(gh_\chi)(h_\chi^{-1}\hat g^{-1}\chi)$ is our new factorisation. The map $(g,\chi)\mapsto gh_\chi$ defines a right action of $L^\omega_1G$ on $D_0^\omega G_\C\simeq\cA^\omega$.

Now, we differentiate the action at $\chi=\ind_G$, i.e. we consider $\chi_t=e^{tu+\bar tu^*}$ for some $u\in D^\omega_0\kg_\C$ and small complex $t$. The function $h_t$ such that $h_t^{-1}\hat g^{-1}\chi_t$ is holomorphic in $\D^*$ satisfies
\[h_0=\ind_G,\qquad\del_th_t|_{t=0}=\tilde u,\qquad\del_{\bar t}h_t=0,\]
where $\tilde u$ is the solution to $\delbar\tilde u=\Ad_{\hat g^{-1}}(\delbar u)$ in $\hat\C$ and $\tilde u(0)=0$. Hence, writing $\alpha_t=g_t^{-1}\del g_t$, we have the first order expansion
\[\alpha_t=\alpha+tK_\alpha\tilde u+o(t)\text{ in }\cA.\]
For all $F\in C^\infty(\cA^\omega)$, we then have a first order expansion
\[F(\alpha\cdot\chi_t)=F(\alpha)+t\cR_uF(\alpha)+\bar t\cR_{u^*}F(\alpha)+o(t),\]
defining the operators $\cR_u\in\mathrm{End}(C^\infty(\cA^\omega))$ for all $u\in L^\omega\kg_\C$ (by convention, $\cR_u=0$ if $u$ is constant). Since the left and right actions commute, so do the Lie derivatives, i.e. $[\cL_u,\cR_v]=0$ for all $u,v\in L^\omega\kg_\C$. We stress again that the operators $(\cR_u)_{u\in L^\omega\kg_\C}$ cannot be extended to differential operators on the whole $\cA$, but only on the space $\cA^\omega$ where we can use the Birkhoff factorisation. See Section \ref{subsec:shapo} for a discussion on possible extensions. 

As for the left action, we introduce a modification of these operators:
\[\bR_u:=\cR_u+\check\kappa\hat\ralpha(u)\mathrm{Id}\qquad\forall u\in L^\omega\kg_\C.\]
As in the previous section, these operators form a representation of the Kac--Moody algebra at level $\check\kappa$. Moreover, the same computations show that $\cL_u\hat\ralpha(v)-\cR_v\ralpha(u)=0$ for all $u,v\in L^\omega\kg_\C$, implying that the two representations commute. We collect these observations in a proposition.

\begin{proposition}
    As endomorphisms of $C^\infty(\cA^\omega)$, the family $(\bL_u,\bR_u)_{u\in\kg_\C(z)}$ forms two commuting representations of the Kac--Moody algebra at level $\check\kappa$: for all $u,v\in\kg_\C(z)$, we have
    \[[\bL_u,\bL_v]=\bL_{[u,v]}+i\check\kappa\omega_\km(u,v),\qquad[\bR_u,\bR_v]=\bR_{[u,v]}+i\check\kappa\omega_\km(u,v),\qquad[\bL_u,\bR_v]=0.\]
\end{proposition}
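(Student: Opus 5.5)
The plan is to argue exactly as in Proposition \ref{prop:km_real}, and then obtain the mixed commutator from the commutation of the two actions. I treat the three relations in turn.

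\emph{The relation for $\bL$.} This is Proposition \ref{prop:km_real}, restricted from $C^\infty(\cA)$ to $C^\infty(\cA^\omega)$. Nothing new is needed, since the operators $\cL_u$ preserve $C^\infty(\cA^\omega)$.

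\emph{The relation for $\bR$.} First, the operators $(\cR_u)$ satisfy $[\cR_u,\cR_v]=\cR_{[u,v]}$. The real Lie derivatives $\cR^\R_u=\cR_{u_+}+\cR_{u_-}$ come from a genuine right action of $L^\omega_1G$ on $\cA^\omega$. The passage to complex generators then copies the proof of Lemma \ref{lem:commute-cL} verbatim, using the integrability \eqref{eq:J_integrable} of $J$. One caveat is the sign convention: for a right action the real Lie derivatives give an anti-homomorphism or a homomorphism depending on how $F(\alpha\cdot\chi_t)$ is differentiated, and I would check this against the stated normalisation.

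Second, I need the cocycle identity
\[\cR_u\hat\ralpha(v)-\cR_v\hat\ralpha(u)-\hat\ralpha([u,v])=i\omega_\km(u,v).\]
Here $\hat\ralpha$ is the analogue of $\ralpha$ built from the factor $\hat g$, namely $\hat\ralpha=-\del\hat g\hat g^{-1}$ or its reflected version. To prove it, I would write the infinitesimal motion of $\hat g$ under $\chi_t=e^{tu+\bar tu^*}$, which is $\hat g\mapsto \hat g h_t$ modified by the $\delbar$-correction $\tilde u$. This expresses $\cR_u\hat\ralpha(v)$ as a double contour integral of the form \eqref{eq:def_P}, with $g$ replaced by $\hat g$ and the contours placed in $\D^*$. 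Swapping the two contours picks up the residue at $\zeta=z$. The identity $\Ad_{\hat g^{-1}}\circ\del\circ\Ad_{\hat g}(u)=\del u+[\hat g^{-1}\del\hat g,u]$ then yields the right-hand side, exactly as in the proof of Proposition \ref{prop:km_complex}. The commutation relation $[\bR_u,\bR_v]=\bR_{[u,v]}+i\check\kappa\omega_\km(u,v)$ follows by the Leibniz rule.

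\emph{The mixed relation.} Expand
\[[\bL_u,\bR_v]=[\cL_u,\cR_v]+\check\kappa\big(\cL_u\hat\ralpha(v)-\cR_v\ralpha(u)\big).\]
The first term vanishes because the left and right actions commute; this is associativity of multiplication in $L^\omega_1G$ under the Birkhoff identification. It remains to show $\cL_u\hat\ralpha(v)=\cR_v\ralpha(u)$, and I expect this to be the main obstacle, since the left action moves $g$ through the $\delbar$-correction of $\hat g$ and vice versa.

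For this step I would first try the following. Write $\gamma=g\hat g^{-1}$, so that both quantities are second mixed derivatives of one scalar function on $L^\omega_1G$. Indeed, $\ralpha(u)$ is the $t$-derivative of a potential under left multiplication, and $\hat\ralpha(v)$ is the derivative of a potential under right multiplication. The mixed derivative $\partial_s\partial_t$ of that potential evaluated at $\chi_t\gamma\chi'_s$ is symmetric, which gives the equality.

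If no such potential is available at this point, I would instead compute directly. Both sides equal
\[\frac{1}{(2i\pi)^2}\oint\oint\tr\big(\Ad_\gamma(u)(z),v(\zeta)\big)\frac{\d z\d\zeta}{(z-\zeta)^2}\]
with the contours in the same relative position: $u$ lives on the inner side and $v$ on the outer side, separated by $\S^1$. So no residue appears, and the two expressions coincide.
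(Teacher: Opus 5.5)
Your outline is the paper's own. The paper takes the $\bL$-relation from Proposition~\ref{prop:km_real}, gets the $\bR$-relation ``as in the previous section'', and deduces $[\bL_u,\bR_v]=0$ from $[\cL_u,\cR_v]=0$ together with the bare assertion that ``the same computations show'' $\cL_u\hat\ralpha(v)-\cR_v\ralpha(u)=0$.

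The first two relations are fine. The step you flagged as the main obstacle, however, is a genuine gap, and it fails: the two quantities are opposite, not equal.

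The only non-trivial case is $u\in\hat D^\omega_\infty\kg_\C$, $v\in D^\omega_0\kg_\C$. Otherwise either $\cL_u$ fixes $\hat g$ and $\ralpha(u)\equiv0$, or $\cR_v$ fixes $g$ and $\hat\ralpha(v)\equiv0$. Carry out your direct computation with signs:
\begin{itemize}
\item the left motion sends $\hat g$ to $\hat g(1-\epsilon\eta)$, where $\eta$ is the Cauchy transform normalised at $\infty$ of $-\Ad_{g^{-1}}(\delbar u)$;
\item the right motion sends $g$ to $g(1+\epsilon\xi)$, where $\xi$ is normalised at $0$ and is obtained by Stokes on the exterior domain, which reverses orientation.
\end{itemize}
With $|z|<|\zeta|$ and both circles close to $\S^1$, this gives
\[\cL_u\hat\ralpha(v)=\frac{1}{(2i\pi)^2}\oint\oint\tr\big(\hat g^{-1}v\hat g(\zeta),g^{-1}ug(z)\big)\frac{\d z\,\d\zeta}{(\zeta-z)^2}=-\cR_v\ralpha(u).\]
The contours are indeed in the same relative position, so no residue appears. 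That is exactly why nothing can compensate the sign.

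A quick check at $\alpha=0$: linearise the factorisation as $g\approx1+X_+$, $\hat g^{-1}\approx1+X_-$, so $\ralpha\approx-\del X_+$ and $\hat\ralpha\approx\del X_-$. For $u=b\otimes z^{-n}$, $v=b\otimes z^n$ this gives $\cL_u\hat\ralpha(v)=n$ and $\cR_v\ralpha(u)=-n$. Hence $[\bL_u,\bR_v]\ind=2n\check\kappa\neq0$ at $\alpha=0$.

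Your potential argument fails for a related reason. The identities $\ralpha(u)=-\cL_u\sS$ and $\hat\ralpha(v)=-\cR_v\hat\sS$ involve two different potentials, since $\sS\neq\hat\sS$. A single $\Phi$ with $\ralpha(w)=\cL_w\Phi$ for all $w$ would force the left-hand side of \eqref{eq:diff_ralpha} to vanish.

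No other argument can close this gap, because the three relations as stated are incompatible. At $\alpha=0$ one has $\chi_t^{-1}\cdot0=0\cdot\chi_{-t}$, so $\cL_w=-\cR_w=:-T_w$ as derivations at that point. Take $u,v$ with $[u,v]=0$, apply everything to $\ind$, and evaluate at $\alpha=0$:
\begin{itemize}
\item the $\bL$-relation gives $T_v\ralpha(u)-T_u\ralpha(v)=i\omega_\km(u,v)$;
\item the $\bR$-relation gives $T_u\hat\ralpha(v)-T_v\hat\ralpha(u)=i\omega_\km(u,v)$;
\item $[\bL_u,\bR_v]\ind=[\bL_v,\bR_u]\ind=0$ give $T_u\hat\ralpha(v)=-T_v\ralpha(u)$ and $T_v\hat\ralpha(u)=-T_u\ralpha(v)$.
\end{itemize}
Substituting and adding yields $2i\omega_\km(u,v)=0$. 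This is false for $u=b\otimes z^{-n}$, $v=b\otimes z^n$, where $\omega_\km(u,v)=in$.

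This is the loop-group version of the fact that left and right translations on a central extension carry opposite levels. It holds whatever one-form is used to modify $\cR$. The paper's one-line justification therefore has the same defect as your proposal.

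What the computation above does show is that $\bL$ commutes, on non-constant modes, with $\cR_v-\check\kappa\hat\ralpha(v)$, which is a representation at level $-\check\kappa$. A correct statement must either flip the level of the right-hand representation in this way or drop the commutation claim.
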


    \subsection{Definition and properties of \texorpdfstring{$\sS$}{the K\"ahler potential}}\label{subsec:potential}

The operators $(\cL_u)_{u\in L^\omega\kg_\C}$ introduced in the previous section describe the differentiable structure of $\cA$ as a complex manifold endowed with the right-invariant Kac--Moody metric. We stress that this complex structure is not the same as the canonical complex structure on $\cA$, namely the coordinate functions $(\alpha_{b,m})_{b\in\kB,m\geq1}$ are not holomorphic with respect to the Kac--Moody metric. The operators $(\cL_u)_{u\in D^\omega_0\kg_\C}$ represent the $(1,0)$-part of the de Rham differential, while the operators $(\cL_u)_{u\in\hat D^\omega_\infty\kg_\C}$ represent the $(0,1)$-part. More precisely, given a test function $F$, its $(1,0)$-differential is the function $\del F:\cA\times D^\omega_0\kg_\C$ defined by $\del F_\alpha(u)=\cL_uF(\alpha)$. Similarly, $\delbar F_\alpha(u)=\cL_uF(\alpha)$ for all $\alpha\in\cA$ and $u\in\hat D^\omega_\infty\kg_\C$. 

Proposition \ref{prop:km_real} implies that, viewed as a $(0,1)$-form on $\cA^\omega$, $\ralpha=-\del gg^{-1}$ is $\delbar$-closed (since $\omega_\km$ vanishes on $\hat D^\omega_\infty\kg_\C\times\hat D^\omega_\infty\kg_\C$). In finite dimensional complex geometry, the $\delbar$-Poincar\'e lemma would immediately imply that $\ralpha$ is locally $\delbar$-exact (and in fact globally $\delbar$-exact since $\cA$ is topologically trivial). The next lemma/definition makes this precise in our infinite dimensional setup, by exhibiting a function $\sS$ such that $\delbar\sS=-\ralpha$. By Proposition \ref{prop:km_real}, we then have 
\[i\del\delbar\sS=\omega_\km,\]
i.e. $\sS$ is a K\"ahler potential for the Kac--Moody metric.

\begin{proposition}\label{prop:potential}
    There exists a unique function $\sS:\cA^\omega\to\R$ satisfying
    \begin{itemize}
    \item $\sS(\mathbf 1_G)=0$. 
    \item For all $u\in\hat D^\omega_\infty\kg_\C$, $\sS$ is left-differentiable in direction $u$, and $\cL_u\sS(g)=\frac{1}{2i\pi}\oint\tr(u\del gg^{-1})$.
    \end{itemize}
    We call $\sS$ the \emph{universal Kac--Moody action}.
\end{proposition}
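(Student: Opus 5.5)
Uniqueness is the easier half, and I would dispose of it first. If $\sS_1,\sS_2$ both satisfy the two conditions, then $F:=\sS_1-\sS_2$ is real-valued and satisfies $\cL_uF=0$ for all $u\in\hat D^\omega_\infty\kg_\C$. Since $F$ is real and $\cL_{u^*}F=\overline{\cL_uF}$, which follows from the definition of left-differentiability applied to real functions with $\del_{\bar t}\chi_t=u^*$, we also get $\cL_uF=0$ for $u\in D^\omega_0\kg_\C$. Hence $F$ is constant along every $C^1$-curve in the left action. The left action of $L^\omega_1G$ on $\cA^\omega$ is transitive, because under Proposition \ref{prop:factorisation} it is just left multiplication of $L^\omega_1G$ on itself. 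So, joining any $\alpha\in\cA^\omega$ to $0$ (the point $\mathbf 1_G$) by a $C^1$ path in $L^\omega_1G$ (for instance $g_t(z)=g(tz)$ transported through the factorisation), $F$ is constant. The normalisation $\sS(\mathbf 1_G)=0$ then forces $F=0$.

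For existence, I would define $\sS$ by integrating the $(0,1)$-form $-\ralpha$ (together with its conjugate) along paths, i.e. as a line integral of the real $1$-form $\theta:=-\ralpha-\overline{\ralpha}$ on $\cA^\omega\simeq L^\omega_1G$. Concretely, for $\gamma\in L^\omega_1G$ choose a $C^1$ path $\gamma_s$ from $\mathbf 1_G$ to $\gamma$ and set
\[\sS(\gamma)=\int_0^1\theta_{\gamma_s}(\dot\gamma_s\gamma_s^{-1})\,\d s,\]
where the right-invariant velocity $\dot\gamma_s\gamma_s^{-1}\in L^\omega\kg$ is split as $w_++w_-$ with $w_-=w_+^*$, and $\theta$ acts by $w\mapsto-\ralpha(w_-)-\overline{\ralpha(w_-)}$. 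Path-independence amounts to $\d\theta=0$. Here Proposition \ref{prop:km_real} does the work: \eqref{eq:diff_ralpha} restricted to $u,v\in\hat D^\omega_\infty\kg_\C$ gives $\delbar\ralpha=0$, since $\omega_\km$ vanishes there. Then $\d(\ralpha+\bar\ralpha)=\del\ralpha+\overline{\del\ralpha}$, and I need this to vanish as a real 2-form. That is not automatic, since $\del\ralpha$ is only $-i\omega_\km$-type. This tells me that the primitive should instead be built on the simply connected $\cA^\omega$ via a radial homotopy, which is the standard proof of the $\delbar$-Poincar\'e lemma. So I would rather take the explicit candidate
\[\sS(g)=\int_0^1\frac{1}{2i\pi}\oint\tr\big(\ralpha_s^{\,*}\text{-type term}\big),\]
obtained from the scaling flow $g_s(z)=g(sz)$, $s\in[0,1]$. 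Then I would check directly that $\cL_u\sS=-\ralpha(u)$ for $u\in\hat D^\omega_\infty\kg_\C$, by differentiating under the integral and using $\delbar\ralpha=0$ together with the commutation of the scaling flow with the $\D^*$-holomorphic right factors. Reality of $\sS$ would be arranged by taking the real part, i.e. by integrating $\theta$ along this specific path.

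The main obstacle is this existence step in infinite dimensions: justifying that the homotopy formula produces a function whose $\hat D$-directional derivatives are exactly $\frac{1}{2i\pi}\oint\tr(u\,\del gg^{-1})$. This requires differentiating the Birkhoff factorisation along the path, as in Section \ref{subsec:real_action}, and controlling analyticity domains uniformly in $s$. I would first try the abelian check $\sS=\frac{i}{4\pi}\int_\D\alpha\wedge\bar\alpha$ to fix signs, and then run the general computation using Lemma \ref{lem:K}'s contour formula for $K_\ralpha u$.
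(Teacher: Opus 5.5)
Your uniqueness argument is the same as the paper's. Reality of $\sS_1-\sS_2$ turns vanishing $(0,1)$-derivatives into vanishing $(1,0)$-derivatives, and the fundamental theorem of calculus along the radial path $g_r(z)=g(rz)$ finishes.

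The existence half is not established, and the cause is a misdiagnosis. You assert that $\d\theta=0$ for $\theta=-\ralpha-\bar\ralpha$ ``is not automatic''. It is automatic. \eqref{eq:diff_ralpha} says $\delbar\ralpha=0$ in $\hat D^\omega_\infty\kg_\C$-directions and $\del\ralpha=i\omega_\km$ in mixed directions, so $\d\ralpha=\del\ralpha=i\omega_\km$. Since $\omega_\km$ is a real form on $L^\omega_0\kg$ (e.g. $\omega_\km(u,v)=2\Im(\sum_{b,m}m\bar u_{b,m}v_{b,m})$), its conjugate is $\overline{\del\ralpha}=-i\omega_\km$. Hence $\d\theta=-(\del\ralpha+\overline{\del\ralpha})=0$.

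Closedness is in fact forced by the statement itself. If $\sS$ is real with $\delbar\sS=-\ralpha$, then $\del\sS=\overline{\delbar\sS}=-\bar\ralpha$, so $\d\sS=\theta$. Had your doubt been justified, no such $\sS$ would exist at all.

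Having set closedness aside, your fallback cannot work as described. Integrating $\theta$ along the radial homotopy $H(r,\alpha)=r\alpha(r\,\cdot)$ gives a function $f$ with $\d f=\theta-\int_0^1\iota_{\del_r}H^*\d\theta\,\d r$. Without $\d\theta=0$ you therefore cannot verify $\cL_uf=-\ralpha(u)$.

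Using only $\delbar\ralpha=0$ gives at best a complex $\delbar$-primitive $f$. Its real part satisfies $\delbar\Re f=\frac12(-\ralpha+\overline{\del f})$, which equals $-\ralpha$ only if $\del f=-\bar\ralpha$, i.e. again $\d\theta=0$. On top of this, your candidate formula is a placeholder and the derivative identity is deferred as the ``main obstacle''. So existence is essentially unproved.

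The missing ingredient is exactly the reality of $\omega_\km$, which the paper invokes at the end of its proof. With it, your first idea goes through:
\begin{itemize}
\item $\cA^\omega\simeq D^\omega_0G_\C$ is contractible via $g\mapsto g(r\,\cdot)$.
\item The line integral of the closed real form $\theta$ from $0$ defines a real $\sS$ with $\d\sS=\theta$.
\item The $(0,1)$-part of $\d\sS$ is then $-\ralpha$, as required.
\end{itemize}
This is a different route from the paper. The paper instead pulls $\ralpha$ back along the antiholomorphic discs $q\mapsto q\mu$ through $0$ and solves a one-variable $\del$-problem by Cauchy--Pompeiu, using $\delbar(q^*\ralpha)=iq^*\omega_\km$.
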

We will view $\sS$ as a function on either $\cA^\omega$, $D^\omega_0G_\C$ or $L^\omega_1G$ depending on the context. In particular, we define
    \begin{equation}\label{eq:def-hat-S}
    \hat\sS(\gamma):=\sS(\gamma^{-1})\qquad\forall\gamma\in L^\omega_1G.
    \end{equation}

\begin{proof}
We will prove this by hand, using the single input of existence of directional derivatives in directions $u\in L^\omega\kg_\C$. 

Since we require $\sS\in\R$, we must also have $\cL_u\sS=-\overline{\ralpha(u^*)}$ for all $u\in D^\omega_0\kg_\C$. Uniqueness is then clear: if $\sS_1,\sS_2$ satisfy the required properties, then $\cL_u(\sS_1-\sS_2)=0$ for all $u\in L^\omega_0G_\C$. Now, let $\alpha=g^{-1}\del g\in\cA^\omega$ and consider the path $(g_r)_{r\in[0,1]}$ in $D^\omega_0G_\C$ defined by $g_r(z)=g(rz)$. We have also $\alpha_r(z)=rg_r^{-1}\del g_r(z)=zr\alpha(rz)$. Set $u_r:=\frac{\del}{\del\epsilon}_{|\epsilon=0}g_{re^{-\epsilon}}g_r^{-1}$; we have explicitly $u_r(z)=-rz\del gg^{-1}(rz)$ and note that $u_\cdot\in C^0([0,1],D^\omega_0\kg_\C)$. By the fundamental theorem of calculus, we have $\sS_1(\alpha)-\sS_2(\alpha)=\sS_1(0)-\sS_2(0)+\int_0^1(\cL_{u_r}\sS_1(\alpha_r)-\cL_{u_r}\sS_2(\alpha_r))\d r=0$. 

For the existence, recall that the Cauchy--Pompeiu formula gives a way to compute the $\del$-antiderivative of a $(1,0)$-form $\vartheta$ on a complex domain $D$ with $C^1$-boundary:
\[f(z):=\frac{1}{2i\pi}\oint_{\del D}\frac{\vartheta(\zeta)}{\zeta-z}-\frac{1}{2i\pi}\int_D\frac{\delbar\vartheta(\zeta)}{\zeta-z}.\]
Namely, $f$ is independent of the arbitrariness in the choice of $D$, and $\del f=\vartheta$. Now, suppose given an antiholomorphic embedding $q:\D\to\cA^\omega$ which is $C^1$ up to the boundary, and such that $q(0)=0$. Then, $q^*\ralpha$ is a $(1,0)$-form in $\D$, and $\delbar(q^*\ralpha)=q^*(\del\ralpha)=iq^*\omega_\km$. We can then apply the Cauchy--Pompeiu formula to $q^*\ralpha$, which only involves $\ralpha$ and $\omega_\km$ evaluated on the directions specified by $\delbar q\in C^0(\bar\D;\cA^\omega\simeq L^\omega\kg_\C)$. 

Hence, we can define unambiguously the $\delbar$-antiderivative of $\ralpha$ provided we can find such antiholomorphic embeddings $q$. Given $\ralpha=-\del gg^{-1}\in\cA^\omega$, we can associate a $\kg_\C$-valued $(0,1)$-form $\mu$ compactly supported in $\D^*$ (so that $\delbar gg^{-1}=\mu$). The map $\D\ni q\mapsto q\mu$ then defines such a holomorphic embedding of $\D$ into $\cA^\omega$ (with each $q\mu$ representing a unique point in $\cA^\omega$), which allows us to define $\sS$ unambiguously on $\cA^\omega$, and $\delbar\sS=-\ralpha$. Finally, the fact that $\sS$ is real is immediate from the fact that $\omega_\km$ itself is real. 
\end{proof}

\begin{remark}
    The definition of $\sS$ is by all means similar to the definition of the universal Liouville action \cite{Takhtajan-Teo06}, which is a K\"ahler potential for the Weil--Petersson on $\mathrm{Diff}^\omega(\S^1)/\mathrm{PLS}_2(\R)$. An important difference is that $\sS$ is not invariant under the inverse map ($\hat\sS\neq\sS$), contrary to the universal Liouville action. Probabilistically, this translates into the fact that $\nu_\kappa$ is not reversible. In fact, the very definition of the ``law of the inverse" under $\nu_\kappa$ is non-trivial, since the measure gives 0 mass to continuous loops. See Section~\ref{subsec:shapo} for a related discussion.
\end{remark}
\begin{remark}
    Given \cite[Theorem 1]{Takhtajan21_wzw}, we expect that $\sS$ has an expression involving the Wess--Zumino--Witten (WZW) action with target space $G_\C/G$. More precisely, if we view $\alpha\in\cA^\omega$ as representing a $G_\C$-bundle over $\hat\C$, and a $G_\C/G$-valued map on $\hat\C$ as a Hermitian metric in this bundle, then $\sS$ should realise the infimum of the $G_\C/G$-WZW action over all the Hermitian metrics in the bundle represented by $\alpha$. This identity will be the starting point of the coupling between $\nu_\kappa$ and the $G_\C/G$-WZW model. 
\end{remark}

The next lemma shows that $\sS\geq0$, a fact that we prove in a rather indirect way by relating the differential of $\hat\sS$ along the radial flow to the kinetic energy \eqref{eq:kinetic}, which is of independent interest.

\begin{lemma}\label{lem:positive}
    For all $\alpha\in\cA^\omega$, we have
    \[\lim_{t\to0}\frac{1}{t}\left(\hat\sS(e^{-t}\alpha(e^{-t}\cdot))-\hat\sS(\alpha)\right)=-\sE(\alpha)\leq0.\]
    As a consequence, $\sS\geq0$ on $\cA^\omega$ and $\sS(\alpha)=0$ if and only if $\alpha=0$.
\end{lemma}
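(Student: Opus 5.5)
The plan is in two parts. First compute the derivative of $\hat\sS$ along the radial flow $\alpha\mapsto e^{-t}\alpha(e^{-t}\cdot)$, i.e.\ $g\mapsto g_t:=g(e^{-t}\cdot)$. Then integrate it to obtain positivity of $\sS$.

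\textbf{The derivative.} Write $\gamma=g\hat g^{-1}\in L^\omega_1G$ for the Birkhoff factorisation given by Proposition~\ref{prop:factorisation}. Then $\gamma^{-1}=\hat g g^{-1}$. The radial flow $g\mapsto g_t$ changes $\gamma$ to $\gamma_t=g_t\hat g_t^{-1}$, where $\hat g_t$ is determined by factorisation, and it changes $\gamma^{-1}$ to $\gamma_t^{-1}$. I will express the motion of $\gamma^{-1}$ as a left multiplication by a real loop $\chi_t$, so that $\gamma_t^{-1}=\chi_t\gamma^{-1}$. At first order, $\chi_t=\ind_G+t(w+w^*)+o(t)$ with $w\in D^\omega_0\kg_\C$ and
\[w+w^*=\frac{\d}{\d t}\Big|_{t=0}\gamma_t^{-1}\gamma=-\hat g\big(g^{-1}z\del g\big)\hat g^{-1}+(\text{term from }\del_t\hat g_t).\]
Equivalently, $\frac{\d}{\d t}\gamma_t^{-1}\,\gamma$ is the real projection onto $L^\omega\kg$ of $-\Ad_{\hat g}(z\alpha)$.

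Next apply Proposition~\ref{prop:potential} together with realness: $\cL_u\sS=-\ralpha(u)$ for $u\in\hat D^\omega_\infty\kg_\C$, and the conjugate formula holds for the $D^\omega_0$ part. Evaluated at $\gamma^{-1}$, for which the relevant form is $-\del\hat g\,\hat g^{-1}$ after reflection, this gives
\[\frac{\d}{\d t}\hat\sS=2\,\Re\,\frac{1}{2i\pi}\oint\tr\!\big(w^*\cdot(\text{Maurer--Cartan form of }\gamma^{-1})\big).\]
The key identity is on $\S^1$, where $\gamma$ is real and holomorphic near the circle:
\[z\gamma^{-1}\del_z\gamma=\Ad_{\hat g}\big(z\alpha\big)-z\hat g^{-1}\del\hat g .\]
Pairing this with itself through the residue pairing, the cross terms between a $D_0$ piece and a $\hat D_\infty$ piece vanish by holomorphy and contour deformation (only the constant Fourier modes could survive, and these are killed by the normalisations $g(0)=1$ and $\hat g(\infty)=1$). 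What remains should be exactly $-\frac{1}{2i\pi}\oint\tr(\gamma^{-1}\del\gamma,\gamma^{-1}\del\gamma)z\,\d z=-\sE(\gamma)$. Here we identify $\sE(\alpha):=\sE(\gamma)$, and $\sE\ge0$ because $\tr$ is negative definite on $\kg$ while the factor $z\,\d z/(2i\pi)$ over $\S^1$ supplies the sign, as in \eqref{eq:kinetic}. I expect this bookkeeping, namely matching the Hilbert-transform splitting with the $\del$/$\delbar$ parts of the derivative of $\sS$, to be the main obstacle. I would first check the formula in the abelian case, where $\sS$ is the explicit quadratic form $\frac{i}{4\pi}\int_\D\alpha\wedge\bar\alpha$.

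\textbf{Positivity.} As $t\to\infty$, $g_t\to\ind$ and hence $\gamma_t\to\ind_G$ in the analytic topology, so $\hat\sS(\gamma_t)\to\hat\sS(\ind)=0$ by continuity of $\sS$ on $\cA^\omega$. Integrating the derivative along the flow gives
\[\hat\sS(\alpha)=\int_0^\infty\sE(\alpha_t)\,\d t\ge0 .\]
Equality forces $\sE(\gamma_t)=0$ for all $t$. Since $\sE$ vanishes exactly at homomorphisms (the critical points recalled in Section~\ref{subsec:factorisation}), and the only homomorphism in $L^\omega_1G$ is trivial, this gives $\gamma=\ind$, i.e.\ $\alpha=0$.

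To pass from $\hat\sS$ to $\sS$, apply the same argument to $\gamma^{-1}\in L^\omega_1G$, which lies in $L^\omega_1G$ by Lemma~\ref{lem:subgroup}; this gives $\sS(\gamma)=\hat\sS(\gamma^{-1})\ge0$ with the same equality case.
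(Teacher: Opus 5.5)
\textbf{Gap in the derivative identity.} Your architecture matches the paper's: differentiate $\hat\sS$ along the radial flow, identify the derivative with $-\sE$, then use monotonicity. However, you do not establish the first assertion of the lemma, and the mechanism you propose for it is wrong. The computation needs three steps.

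(i) Writing $\gamma_t=\gamma\chi_t$, one has $\dot\chi_0=-\Ad_{\hat g}(z\alpha)-\dot{\hat g}\hat g^{-1}\in L^\omega\kg$. So the $D^\omega_0\kg_\C$-generator $u$ is the positive-Fourier-mode part of $-\Ad_{\hat g}(z\alpha)$. The $\hat D_\infty$-part is absorbed by $\dot{\hat g}$, and the constant mode acts by conjugation.

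(ii) The variation of $\hat\sS$ in direction $u$ pairs $u$ with $\hat\ralpha=-\del\hat g\,\hat g^{-1}$ \emph{alone}, as in the paper: $\cR_u\hat\sS=-\frac{1}{2i\pi}\oint\tr(u\hat\ralpha)$. You say this in words, but your display pairs with ``the Maurer--Cartan form of $\gamma^{-1}$''. That form, e.g.\ $\del(\gamma^{-1})\gamma=\del\hat g\,\hat g^{-1}-\Ad_{\hat g}(\alpha)$, has an extra term.

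(iii) Since $\hat\ralpha$ is holomorphic in $\D^*$ and $O(z^{-2})$ at $\infty$, it only pairs with positive modes. Hence $u$ may be replaced by the full $-\Ad_{\hat g}(z\alpha)$, and invariance of $\tr$ gives $\oint\tr(u(z)\hat\ralpha(z))\,\d z=\oint\tr(\alpha(z)\hat\alpha(z))\,z\,\d z$ with $\hat\alpha=\hat g^{-1}\del\hat g$. The paper does this step by Stokes' formula.

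Only then does $\sE$ enter. The correct identity is $\gamma^{-1}\del\gamma=\Ad_{\hat g}(\alpha-\hat\alpha)$; yours has $\hat g^{-1}\del\hat g$ where $\del\hat g\,\hat g^{-1}$ belongs. In the expansion of $\sE$, the terms $\oint\tr(\alpha,\alpha)z\,\d z$ and $\oint\tr(\hat\alpha,\hat\alpha)z\,\d z$ vanish, by holomorphy in $\D$, resp.\ in $\D^*$ with an $O(z^{-3})$ integrand. It is the \emph{cross term} $2\tr(\alpha,\hat\alpha)$ that survives, and it matches (iii).

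Your claim that the cross terms vanish and that ``what remains'' is $-\sE$ is therefore backwards. Expanding $\sE$ on its own says nothing about $\hat\sS$; the content of the lemma is that the single surviving term of $\sE$ is exactly the quantity produced by (i)--(iii). The overall sign is then most easily confirmed by your proposed abelian check.

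\textbf{Positivity step.} Granting the derivative formula, your positivity argument is correct and differs from the paper's. The paper argues locally: $\hat\sS$ is $C^2$ on $\cA^\omega$ with vanishing differential and positive definite Hessian (the Kac--Moody inner product) at $0$. So it is positive on a punctured neighbourhood of $0$, and monotonicity along the flow transports this to every $\alpha$. You instead integrate the flow globally to get $\hat\sS(\alpha)=\int_0^\infty\sE(\alpha_t)\,\d t$. This uses only continuity of $\hat\sS$ at $0$, needs no second-order information, and gives an explicit representation.

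One correction in the equality case: $\sE$ does not vanish at nontrivial homomorphisms, since $\sE(z^x)=\tr(x,x)>0$ for $0\neq x\in i\kg$. Rather, $\sE(\gamma)=-\frac{1}{2\pi}\int_0^{2\pi}\tr(\gamma^{-1}\partial_\theta\gamma,\gamma^{-1}\partial_\theta\gamma)\,\d\theta$ vanishes exactly on constant loops. That suffices: if $\gamma$ is constant, then $g=\gamma\hat g$ extends holomorphically to $\hat\C$, so $g\equiv g(0)=1_G$ and $\alpha=0$. The passage from $\hat\sS$ to $\sS$ through $\gamma\mapsto\gamma^{-1}$ is fine.
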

\begin{proof}
    Let us write the Birkhoff factorisation $\gamma=g\hat g^{-1}\in L^\omega_1G$ with $\alpha=g^{-1}\del g$, and set $\hat\alpha:=\hat g^{-1}\del\hat g$ and $\hat\ralpha:=-\del\hat g\hat g^{-1}$. Setting $h_t(z):=g^{-1}(z)g(e^{-t}z)$ for any $t$ with $\Re(t)>0$, we have
    \[\del_t|_{t=0}h_t(z)=-z\alpha\qquad\text{and}\qquad\del_{\bar t}|_{t=0}h_t=0.\]
    The corresponding motion of the loop $\gamma$ is of the form $\gamma_t=\gamma\chi_t$ with $\chi_t=e^{tu+\bar tu^*+o(t)}$ with $u\in D^\omega_0\kg_\C$ is the solution to $\delbar u=-z\Ad_{\hat g}(\delbar\alpha)$ and $u(0)=0$ (where we have extended $\alpha$ smoothly to $\hat\C$ in some arbitrary way). By Stokes' formula and the fact that $\hat\ralpha=-\Ad_{\hat g}(\hat\alpha)$, we have
    \begin{align*}
    \oint_{\S^1}\tr(u(z)\hat\ralpha(z))\d z
    &=\int_{\D^*}\tr(\hat g(z)\del_{\bar z}\alpha(z)\hat g^{-1}(z)\hat\ralpha(z))z|\d z|^2\\
    &=-\int_{\D^*}\tr(\del_{\bar z}\alpha(z)\hat\ralpha(z))z|\d z|^2=\oint_{\S^1}\tr(\alpha(z)\hat\alpha(z))z\d z.
    \end{align*}
    On the other hand, we have $\cR_v(\hat\sS)=\frac{-1}{2i\pi}\oint\tr(v\hat\ralpha)$ for all $v\in D^\omega_0\kg_\C$ by definition of $\hat\sS$, hence    
    \begin{equation}\label{eq:expand-S-hat}
    \hat\sS(\alpha_t)=\hat\sS(\alpha)-2\Re\left(\frac{t}{2i\pi}\oint_{\S^1}\tr\left(\alpha(z)\hat\alpha(z)\right)z\d z\right)+o(t).
    \end{equation}
    It remains to show that what's inside the real part coincides with the kinetic energy. Using $\gamma^{-1}\del\gamma=\Ad_{\hat g}(\alpha+\hat\alpha)$ and the definition of the kinetic energy \eqref{eq:kinetic}, we find 
    \begin{align*}
    \sE(\gamma)=\frac{1}{2i\pi}\oint_{\S^1}\left(\tr\left(\alpha(z)\alpha(z)\right)+2\tr\left(\alpha(z)\hat\alpha(z)\right)+\tr\left(\hat\alpha(z)\hat\alpha(z)\right)\right)z\d z=\frac{1}{i\pi}\oint_{\S^1}\tr(\alpha(z)\hat\alpha(z))z\d z.
    \end{align*}
    The last equality just comes from the fact that $\tr(\alpha,\alpha)$ and $\tr(\hat\alpha,\hat\alpha)$ are holomorphic in $\D$ and $\D^*$ respectively. Plugging this into \eqref{eq:expand-S-hat}, we have $\hat\sS(\alpha_t)=\hat\sS(\alpha)-\Re(t\sE(\alpha))+o(t)$. Evaluating this on the real slice $\Im(t)=0$ gives that $\hat\sS$ is decreasing along the radial flow.

    It remains to prove that $\hat\sS\geq0$ with equality if and only if $\alpha=0$. The function $\hat\sS$ is of class $C^2$ on $\cA^\omega$, its differential at 0 vanishes, and its Hessian at 0 is positive definite (it is the Kac--Moody inner-product). Hence, the exists a neighbourhood $\cU$ of 0 in $\cA^\omega$ such that $\hat\sS>0$ in $\cU\setminus\{0\}$. Now, the radial flow starting from an arbitrary $\alpha\in\cA^\omega\setminus\{0\}$ converges to $0\in\cA^\omega$ as $t\to\infty$, so we can fix $t_0>0$ finite such that $\alpha_{t_0}\in\cU$, and obviously $\alpha_{t_0}\neq0$. We then have $\hat\sS(\alpha)\geq\hat\sS(\alpha_{t_0})>0$.
\end{proof}

A priori, the potential $\sS$ is only defined on $\cA^\omega$, but the next proposition shows that differences of potentials can be extended to the whole of $\cA$. More precisely, we establish that the covariance moduli $\Omega$ and $\Lambda$ appearing in Theorem \ref{thm:main} are well-defined continuous functions on $L^\omega_1G\times\cA$. Recall from Section \ref{subsec:action_complex} that we have a right action of $\hat D^\omega_\infty G_\C$ on $\cA$, and from Section~\ref{subsec:real_action} that we have a left action of $L^\omega_1G$ on $\cA$. We denote these actions respectively $(\chi,\alpha)\mapsto\chi\cdot\alpha$ and $(\alpha,h)\mapsto\alpha\cdot h$ if $\alpha\in\cA^\omega$, $h\in\hat D^\omega_\infty G_\C$ and $\chi\in L^\omega_1G$.
\begin{proposition}\label{prop:Omega}
    Let $\Omega:L^\omega_1G\times\cA^\omega\to\R$ and $\Lambda:\cA^\omega\times\hat D^\omega_\infty G_\C\to\R$ be defined by
    \[\Omega(\chi,\alpha):=\sS(\chi\cdot\alpha)-\sS(\alpha),\qquad\text{and}\qquad\Lambda(\alpha,h):=\sS(\alpha\cdot h)-\sS(\alpha).\]
    The functions $\Omega$ and $\Lambda$ extend continuously to functions on $L^\omega_1G\times\cA$ and $\cA\times\hat D^\omega_\infty G_\C$ respectively, which we still denote $\Omega$ and $\Lambda$.
\end{proposition}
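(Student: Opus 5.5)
The plan is to write $\Omega$ and $\Lambda$ as integrals along paths in the group of explicit first-order variations of $\sS$, and to check that these integrands only see $\alpha$ in the interior of $\D$. Such expressions are continuous for the local uniform topology on $\cA$.

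\textbf{The function $\Omega$.} Fix $\chi\in L^\omega_1G$ and join it to $\ind_G$ by a path $(\chi_s)_{s\in[0,1]}$ in $L^\omega_1G$. For instance, write $\chi=g_\chi\hat g_\chi^{-1}$ and use the radial homotopy $g_\chi(e^{-t}\cdot)$, transported through Proposition \ref{prop:factorisation}. Let $v_s:=\del_s\chi_s\chi_s^{-1}\in L^\omega\kg$, which is real, and split it as $v_s=(v_s)_++(v_s)_0+(v_s)_-$ with $(v_s)_-=(v_s)_+^*$. For $\alpha\in\cA^\omega$ the fundamental theorem of calculus gives
\[\Omega(\chi,\alpha)=\int_0^1\frac{\d}{\d s}\sS(\chi_s\cdot\alpha)\,\d s.\]
By Proposition \ref{prop:potential} and the reality of $\sS$, the integrand equals $-2\Re\,\ralpha_s((v_s)_-)$ up to sign and conjugation conventions, where $\ralpha_s=-\del g_sg_s^{-1}$ is attached to $\chi_s\cdot\alpha$. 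The constant part of $v_s$ contributes nothing, because $\sS$ is invariant under conjugation by $G$.

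The key point is that $\ralpha_s(w)=\frac{1}{2i\pi}\oint\tr(w\ralpha_s)$, for $w\in\hat D^\omega_\infty\kg_\C$, can be computed on a contour $r\S^1$ with $r<1$ lying inside the annulus of analyticity of $w$. So it only involves $\ralpha_s$ on a compact subset of $\D$. I then need the map $\alpha\mapsto\chi_s\cdot\alpha$ to be continuous $\cA\to\cA$, uniformly in $s$. To get this, I would write it through the $\delbar$-problem of Section \ref{subsec:real_action}: $h$ solves a Cauchy-transform equation whose data $\Ad_{g^{-1}}(\delbar\chi^*)$ is supported in a fixed compact annulus $A\subset\D$ (extending $\chi$ smoothly with $\delbar\chi$ supported near $\S^1$ inside $\D$). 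This data depends continuously on $g|_A$, and $g|_A$ depends continuously on $\alpha$ in the local uniform topology, by the ODE $g'=g\alpha$. Consequently the integrand is jointly continuous in $(s,\alpha)\in[0,1]\times\cA$, and the integral defines the extension. Continuity in $\chi$ follows in the same way, by choosing paths that depend continuously on $\chi$ (e.g. the radial homotopy). Finally, I will check that the extension does not depend on the chosen path. This holds because independence already holds on the dense subset $\cA^\omega$, where the integral equals $\sS(\chi\cdot\alpha)-\sS(\alpha)$, and the integral is continuous in $\alpha$.

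\textbf{The function $\Lambda$.} The same scheme applies with the right action of $\hat D^\omega_\infty G_\C$. Take $h_s(z)=h(e^{s}z)$ type radial paths to $\ind$, or $h_s=e^{sw}$ locally. Then
\[\frac{\d}{\d s}\sS(\alpha\cdot h_s)\]
is computed from $\delbar\sS=-\ralpha$ and its conjugate, applied to the fundamental vector field $K_\alpha$ of Lemma \ref{lem:K}. The formula of Lemma \ref{lem:K} is a contour integral inside $\D$, so it is manifestly continuous in $\alpha\in\cA$. What remains is to express $\sS$-derivatives in the canonical directions $K_\alpha u$ through $\ralpha$, i.e. to relate the right complex action to the left real action. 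I would do this by noting that $\alpha\cdot h$ is obtained from $\alpha$ by a left action of some $\chi_h(\alpha)\in L^\omega_1G$, as in the construction of Section \ref{subsec:action_complex}, so that $\Lambda(\alpha,h)=\Omega(\chi_h(\alpha)^{-1},\alpha)$. Continuity of $\alpha\mapsto\chi_h(\alpha)$ into $L^\omega_1G$ comes again from the $\delbar$-problem with data $\Ad_{gh}(\mu)$ supported in a fixed annulus inside $\D$, which reduces $\Lambda$ to the $\Omega$ case.

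\textbf{Main obstacle.} The delicate step is the uniform continuity of the actions $\alpha\mapsto\chi\cdot\alpha$ (and of $\chi_h(\alpha)$) for the local uniform topology. In particular, one must ensure that the solutions of the nonlinear $\delbar$-problem \eqref{eq:dbar} depend continuously on data that involve $g$ only on a compact subset of $\D$. For this I would first arrange, by a good choice of smooth extensions, that all $\delbar$-data are supported in a fixed annulus $\{r_0\le|z|\le1\}$ with $r_0$ close to $1$. I would then invoke smooth dependence of \eqref{eq:dbar} on $\mu$, for instance via a Neumann/fixed-point argument for the Cauchy transform, or via existence of Birkhoff factorisations in a neighbourhood.
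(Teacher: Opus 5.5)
For $\Omega$ your argument is the paper's. You write $\Omega(\chi,\alpha)$ as the integral of $\del_s\sS(\chi_s\cdot\alpha)$ along a $C^1$ path from $\ind_G$ to $\chi$, and note that the integrand is twice the real part of a residue pairing on a fixed circle $r\S^1\subset\D$, hence continuous for the local uniform topology. You are more careful than the paper on one point. The paper passes from $\oint\tr(u_t\alpha_t)$, with $\alpha_t=\chi_t\cdot\alpha$, directly to the continuity of $\alpha\mapsto\oint\tr(u_t\alpha)$, so it silently uses that $\alpha\mapsto\chi_t\cdot\alpha$ is continuous on $\cA$ uniformly in $t$; you state this explicitly. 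Your remark that path-independence is automatic by density of $\cA^\omega$ is also right.

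For $\Lambda$ your route differs. The paper's ``identical'' argument integrates $\del_t\sS(\alpha\cdot h_t)=2\Re\,\cJ_{u_t}\sS(\alpha\cdot h_t)$ along a path $h_t$ in $\hat D^\omega_\infty G_\C$, where $\cJ_u\sS(\alpha)=\alpha(u)$ is again a pairing on a circle inside $\D$. You instead use $\alpha\cdot h=\chi_h(\alpha)^{-1}\cdot\alpha$ to write $\Lambda(\alpha,h)=\Omega(\chi_h(\alpha)^{-1},\alpha)$. This is the integrated form of the identity $\cJ_u=\cL_{\tilde u}$ used in the proof of Lemma~\ref{lem:induction}. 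It saves a second computation, but it costs three things: joint continuity of $\Omega$, the membership $\chi_h(\alpha)\in L^\omega_1G$, and continuity of $(\alpha,h)\mapsto\chi_h(\alpha)$ in both variables. It also makes $\Lambda$ depend on the left action along an entire path from $\ind_G$ to $\chi_h(\alpha)^{-1}$, whereas the direct route only uses the right action along $h_t$.

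On your ``main obstacle'', a few points. The nonlinear datum is $\Ad_{g^{-1}}(\chi^{-1}\delbar\chi)$, not its linearisation. A Neumann series only solves the nonlinear $\delbar$-problem for small data. In general, continuity comes from your second option, namely the implicit function theorem around an existing solution: the adjoint bundle is then trivial on $\hat\C$, so $\delbar+\mathrm{ad}$ is invertible after normalising at a point.

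Existence at every $(\chi_s,\alpha)$, however, is imported, as in the paper, from Section~\ref{subsec:real_action} and Lemma~\ref{lem:subgroup}. That input is false in general for non-abelian $G$, because the trivial Birkhoff stratum is the big cell, which is not closed under products. Here is an explicit case. Take $G=\mathrm{SU}_2$, $\beta\neq0$ and $s=\sqrt{1+|\beta|^2}$. The loop $\chi(z)=s^{-1}\begin{pmatrix}1&\beta z\\-\bar\beta z^{-1}&1\end{pmatrix}$ is $\mathrm{SU}_2$-valued on $\S^1$. It lies in $L^\omega_1G$, since $\chi=g_\chi\hat g_\chi^{-1}$ with $g_\chi=\begin{pmatrix}1&\beta z\\0&1\end{pmatrix}$ and $\hat g_\chi=\begin{pmatrix}s^{-1}&0\\\bar\beta s^{-1}z^{-1}&s\end{pmatrix}$. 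Now take $g=\begin{pmatrix}1&z/\bar\beta\\0&1\end{pmatrix}\in D^\omega_0G_\C$. Then $\chi g=\begin{pmatrix}z&0\\0&z^{-1}\end{pmatrix}\begin{pmatrix}s^{-1}z^{-1}&s/\bar\beta\\-\bar\beta s^{-1}&0\end{pmatrix}$, and the second factor is holomorphic and invertible on $\hat\C\setminus\{0\}$. Hence $\chi g$ has Birkhoff type $\mathrm{diag}(z,z^{-1})$ on every circle. So no $h$ makes $\chi gh^{-1}$ holomorphic in $\D$, and $\chi\cdot\alpha$ is undefined at $\alpha=g^{-1}\del g\in\cA^\omega$. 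Equivalently, $\chi\gamma\notin L^\omega_1G$ for the real loop $\gamma$ of $g$, which contradicts Lemma~\ref{lem:subgroup}.

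Consequently neither your integrand nor the paper's is defined along every path, and the continuity statement can only be expected on the open set where the action is defined. This is a defect of the set-up that your proposal shares with the paper's proof, not a flaw specific to your argument.
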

\begin{proof}
We only treat the case of $\Omega$, the case of $\Lambda$ being identical.

    Fix $\chi\in L^\omega_1G$. Let $(\chi_t)_{t\in[0,1]}$ be a $C^1$-path in $L^\omega_1G$ such that $\chi_0=\ind_G$ and $\chi_1=\chi$. We denote $u_t:=\frac{\del}{\del\epsilon}_{|\epsilon=0}\chi_{t+\epsilon}\chi_t^{-1}$. By definition, we have $u_\cdot\in C^0([0,1];L^\omega\kg_\C)$. Hence, for all $\alpha\in\cA^\omega$, the function $t\mapsto\sS(\chi_t\cdot\alpha)$ is of class $C^1$ and we have
    \begin{equation}\label{eq:Omega}
    \Omega(\chi,\alpha)=\int_0^1\del_t\sS(\chi_t\cdot\alpha)\d t=\int_0^12\Re(\cL_{u_t}\sS(\chi_t\cdot\alpha))\d t.
    \end{equation}
    Writing $\alpha_t:=\chi_t\cdot\alpha$, we have $\cL_{u_t}\sS(\alpha_t)=\frac{1}{2i\pi}\oint\tr(u_t\alpha_t)$, where the contour can be chosen to be a circle in $\D$ uniformly bounded away from $\del\D$, e.g. $r\S^1$ for some fixed $r\in(0,1)$. Hence, for each $t\in[0,1]$, the map $\cA^\omega\ni\alpha\mapsto\oint\tr(u_t\alpha)$ extends continuously to a function on $\cA$. Since this is uniform in $t\in[0,1]$, \eqref{eq:Omega} shows that $\Omega(\chi,\cdot)$ extends continuously to $\cA$. Finally, \eqref{eq:Omega} is clearly continuous in $\chi$, so that the extension is in $C^0(L^\omega_1G\times\cA)$.
    \end{proof}

\section{Proof of Theorem \ref{thm:main}}\label{sec:proof}

We endow $\cA$ with the local uniform topology, i.e. the topology generated by the family of seminorms 
 \[\left\lbrace\Vert\alpha\Vert_{C^0(K)}:=\sup_{z\in K}|z\alpha(z)|\,\big|\,K\subset\D\text{ compact}\right\rbrace,\]
 where we have defined $|u|:=\sqrt{-\tr(u\bar u)}$ for any $u\in\kg_\C$. The space $\cA$ is separable and completely metrisable, e.g. with the distance $d_\cA(\alpha_1,\alpha_2)=\sum_{n=1}^\infty\frac{2^{-n}\Vert\alpha_1-\alpha_2\Vert_{C^0(e^{-1/n}\bar\D)}}{1+\Vert\alpha_1-\alpha_2\Vert_{C^0(e^{-1/n}\bar\D)}}$. This turns $\cA$ into a Polish space, admitting $\cA^\omega$ as a dense subspace. By Montel's theorem, for every $R>0$, the set
\begin{equation}\label{eq:def-KR}
    K_R:=\left\lbrace\alpha\in\cA|\,\forall n\in\N_{>0},\,\Vert\alpha\Vert_{C^0(e^{-1/n}\bar\D)}\leq Rn^4\right\rbrace
\end{equation}
is a compact subset of $\cA$ (the arbitrary exponent 4 is chosen to make some series converge later). We will denote by 
\begin{equation}\label{eq:Borel}
\P(\cA):=\{\text{Borel probability measures on }\cA\}.
\end{equation}
Since $\cA$ is Polish, every $\nu\in\P(\cA)$ is inner-regular. The space $\cA$ is neither compact nor locally compact, but Lemma \ref{lem:compact} shows a priori that every unitarising measure must give full mass to $\cup_{R>0}K_R$. Thus, as far as unitarising measures are concerned, the space $\cA$ looks like a locally compact space exhausted by the family $(K_R)_{R>0}$. We will say that a measure $\nu\in\P(\cA)$ is \emph{characterised by its moments} if the following two conditions hold: first, $\cC^{1,0}\subset L^2(\nu)$; second, if $\tilde\nu\in\P(\cA)$ is such that $\cC^{1,0}\subset L^2(\tilde\nu)$ and $\langle P,Q\rangle_{L^2(\tilde\nu)}=\langle P,Q\rangle_{L^2(\nu)}$ for all $P,Q\in\cC^{1,0}$, then $\tilde\nu=\nu$ in $\P(\cA)$.

Until we are able to unite them in the same object (i.e. at the end of Section \ref{subsec:mgf}), we will call a measure satisfying \eqref{eq:quasi-invariance} an \emph{$L^\omega_1G$-covariant measure}, and a measure satisfying \eqref{eq:quasi-invariance-bis} a \emph{$\hat D_\infty^\omega G_\C$-covariant measure}. 
 
The plan of the proof is as follows. In Section \ref{subsec:uniqueness}, we prove that $L^\omega_1G$-covariant measures are also $\hat D^\omega_\infty G_\C$-covariant. In Section~\ref{subsec:existence}, we construct an $L^\omega_1G$-covariant measure. In Section~\ref{subsec:mgf}, we prove that this measure is characterised by its moments, and that any $L^\omega_1G$-covariant measure has its moments determined, completing the proof of the theorem. Finally, Section \ref{subsec:shapo} discusses the closures of the operators $(\bJ_u,\bar\bJ_u,\bL_u,\bR_u)_{u\in L^\omega\kg_\C}$ on $L^2(\nu_\kappa)$ and the Shapovalov forms of the corresponding Kac--Moody representations. The levels will always be related by the formula $\check\kappa=-2\check h-\kappa>0$.

\begin{remark}\label{rem:abuse}
Throughout the proof, we will frequently write integrals involving expressions like $\cL_u(\sS)$ or $\cJ_u(\sS)$, by which we mean the continuous extension of these expressions from $\cA^\omega$ to $\cA$. For instance, the function $\cJ_u\sS(\alpha)=\alpha(u)$ has a continuous extension to $\cA$ even though $\sS=\infty$ with full $\nu_\kappa$-probability. This slight abuse of notation makes the computations easier to follow.
\end{remark}

    \subsection{\texorpdfstring{$L^\omega_1G$}{LG}-covariant measures are \texorpdfstring{$\hat D^\omega_\infty G_\C$}{DG}-covariant}\label{subsec:uniqueness}

The goal of this section is to exhibit the link between $L^\omega_1G$-covariant and $\hat D^\omega_\infty G_\C$-covariant measures, which is perhaps the most striking aspect of Theorem \ref{thm:main}.

\begin{proposition}\label{prop:moments}
Let $\nu\in\P(\cA)$ be an $L^\omega_1G$-covariant measure. Then, $\nu$ is $\hat D^\omega_\infty G_\C$-covariant.
\end{proposition}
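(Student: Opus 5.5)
The plan is to reduce both covariance formulas to their infinitesimal versions, i.e. integration-by-parts identities for the Lie derivatives $\cL_u$ and $\cJ_u$. I would then relate the two families of vector fields. Integrating the infinitesimal identity along one-parameter paths in $\hat D^\omega_\infty G_\C$, exactly as in the proof of Proposition \ref{prop:Omega}, should recover \eqref{eq:quasi-invariance-bis}.

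\textbf{Step 1 (infinitesimal $L^\omega_1G$-covariance).} Fix $u\in L^\omega\kg_\C$, apply \eqref{eq:quasi-invariance} to $\chi_t=e^{tu+\bar tu^*}$ and differentiate at $t=0$. Using $\cL_u\sS$ extended continuously as in Remark \ref{rem:abuse}, this gives for all test functions $F$:
\[
\int_\cA \cL_uF\,\d\nu=\check\kappa\int_\cA F\,\cL_u\sS\,\d\nu .
\]
This is the statement that $\bL_u$ and $\bL_{u^*}$ are formally adjoint, up to signs. The differentiation under the integral requires a domination argument, which I expect can be handled using the a priori compactness of Lemma \ref{lem:compact}.

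\textbf{Step 2 (comparing vector fields).} On $\cA^\omega$, identify $\alpha$ with $\gamma=g\hat g^{-1}\in L^\omega_1G$. The right action of $h\in\hat D^\omega_\infty G_\C$ sends $g\mapsto\chi_h^{-1}gh$, where $\chi_h\in L^\omega_1G$. Hence the complex fundamental vector field $K_\alpha u$ equals the left-action field generated by $-\tilde u$ (the real loop part of the Birkhoff correction in Section \ref{subsec:action_complex}) plus the field $\alpha\mapsto\del u+[\alpha,u]$ of genuine right multiplication by $h$. That second field is the shift in $D_0G_\C$, viewed in the coordinates of $\cA$.

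So I would write $\cJ_u=\cL_{-\tilde u(\alpha)}+\cD_u$, where $\tilde u(\alpha)$ depends on $\alpha$. Integrating by parts, the left part is controlled by Step 1, with the extra divergence term coming from the $\alpha$-dependence of $\tilde u$. The $\cD_u$ part is the ``Birkhoff Jacobian'' contribution.

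\textbf{Step 3 (the Jacobian and the level shift).} This is the main obstacle. The divergence of the $\alpha$-dependent coefficient $\alpha\mapsto\tilde u(\alpha)$ must produce the shift $-2\check h$ turning $\check\kappa$ into $-\kappa$. That is, one needs
\[
\int\cJ_uF\,\d\nu=-\kappa\int F\,\cJ_u\sS\,\d\nu .
\]
I would compute this divergence via Lemma \ref{lem:K} and the matrix coefficients $P^{a,n}_{b,m}$ of Lemma \ref{lem:poly_K}, taking the trace $\sum_{a,n}\partial_{\alpha_{a,n}}P^{a,n}_{b,m}$. The degree bound $P^{a,n}_{b,m}\in\cC^{1,0}_{(n-m)_+}$ makes only finitely many terms nonzero. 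The Casimir identity \eqref{eq:coxeter}, $\sum_b[b,[b,x]]=2\check hx$, should then give the trace as $2\check h\,\alpha(u)$, or equivalently $2\check h$ times the relevant derivative of $\hat\sS$. This matches the heuristic density $e^{-\check\kappa\sS-2\check h\hat\sS}$.

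Rather than handle the divergence for general $F$ directly, I would first check the identity on polynomials using the basis $\Psi_{\bk,\tilde\bk}$. In that setting the trace is a concrete finite sum. I would then extend to $C^0_b$ functions by density, using that $\nu$ is supported on $\cup_RK_R$.

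\textbf{Step 4 (integration).} Given the infinitesimal identity for every $u\in\hat D^\omega_\infty\kg_\C$, take a $C^1$ path $h_t$ from $\ind$ to $h$. Then $\nu_t:=(\cdot\,h_t^{-1})_*\nu$ and $e^{\kappa\Lambda(\cdot,h_t)}\nu$ satisfy the same linear ODE in $t$, by the cocycle property of $\Lambda$ inherited from its definition through $\sS$. Uniqueness for this ODE, tested against $F\in C^0_b$, yields \eqref{eq:quasi-invariance-bis}.
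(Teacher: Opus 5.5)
\textbf{There is a genuine gap in Steps 2--3.} Your overall architecture matches the paper's: differentiate \eqref{eq:quasi-invariance} to get an integration by parts for $\cL_u$, deduce $\int_\cA(\cJ_uF+\kappa\alpha(u)F)\,\d\nu=0$ for $u\in\hat D^\omega_\infty\kg_\C$, then integrate along a path $h_t$. You are also right that the $\alpha$-dependence of $\tilde u$ together with a Casimir identity should produce the shift $\check\kappa\mapsto-\kappa$.

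\textbf{The decomposition of $\cJ_u$.} The split $\cJ_u=\cL_{-\tilde u(\alpha)}+\mathcal D_u$ is not a split into vector fields on $\cA$. Neither $\del u+[\alpha,u]$ nor $-\Ad_{g^{-1}}(\del\tilde u)$ is a holomorphic form on $\D$; for $u=b\otimes z^{-m}$ the first has a pole at $0$, and any smooth extension of $u$ destroys holomorphy. Only their sum $K_\alpha u$ is tangent to $\cA$. So $\mathcal D_u$ is not a derivation of functions on $\cA$, and it admits no integration by parts. The observation you are missing is that the complex motion $g_t=e^{-t\tilde u-\bar t\tilde u^*}ge^{tu}$ \emph{is} a motion along the left action of Section \ref{subsec:real_action}: the Birkhoff correction of the left motion generated by $\tilde u$ is, to first order, right multiplication by $e^{tu}$. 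Hence $\cJ_u=\cL_{\tilde u(\alpha)}=\sum_{b\in\kB,m\leq-1}\beta^u_{b,m}\cL_{b,m}$ exactly, where $\delbar\tilde u=\Ad_g(\delbar u)$ and $\tilde u(\infty)=0$. There is no remainder. Step 1 then applies term by term, truncating at $|m|\leq M$ and using $\sum\beta^u_{b,m}\cL_{b,m}\sS\to\alpha(u)$ (by Stokes). The only new term is $\sum_{b\in\kB,m\leq-1}\cL_{b,m}(\beta^u_{b,m})$.

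\textbf{The wrong divergence.} That frame divergence is what must be computed, and it is not the trace $\sum_{a,n}\partial_{\alpha_{a,n}}P^{a,n}_{b,m}$ you propose. Your trace is the divergence of $\cJ_{b\otimes z^m}$ relative to the coordinate frame, i.e. relative to the non-existent ``$D\alpha$''. The measure $\nu$ satisfies no known integration by parts formula for that frame. Using it presupposes the heuristic density $e^{-\check\kappa\sS-2\check h\hat\sS}D\alpha$, which is exactly what the proposition is meant to make rigorous. Your trace is also not a finite sum: for $m\leq-1$ one has $P^{a,n}_{b,m}\in\cC^{1,0}_{n+|m|}$ for every $n$, so the degree bound truncates nothing.

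\textbf{How the paper computes it.} The paper writes $\cL_{b,m}(\beta^u_{b,m})$ as a double contour integral. It uses that the singularity at $\zeta=z$ is removable to swap the contours, and sums the geometric series over $m\leq-1$ to produce a double pole. Applying $\sum_b[b,[b,x]]=2\check hx$ then gives $-2\check h\alpha(u)$. Combined with $\check\kappa+2\check h=-\kappa$, this is exactly the identity you need.

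\textbf{Two further issues.}
\begin{enumerate}
\item Your fallback of checking the identity on the $\Psi_{\bk,\tilde\bk}$ is circular. For an arbitrary $L^\omega_1G$-covariant $\nu$ you know neither that polynomials are integrable nor what their integrals are. In the paper, \eqref{eq:moments} is derived \emph{from} $\hat D^\omega_\infty G_\C$-covariance.
\item In Step 4, passing from test functions to $C^0_b(\cA)$ requires $e^{\kappa\Lambda(\cdot,h)}\in L^1(\nu)$. The paper obtains this from plateau functions $\rho_R$, Lemma \ref{lem:compact} and Fatou's lemma.
\end{enumerate}
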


We postpone the proof to the end of the section, requiring the next two lemmas beforehand. We start with an a priori bound on the support of $L^\omega_1G$-covariant measures. Recall the definition of $K_R$ in \eqref{eq:def-KR}.

\begin{lemma}\label{lem:compact}
    Let $\nu\in\P(\cA)$ be an $L^\omega_1G$-covariant measure. Then, $\nu(\cup_{R>0}K_R)=1$.
\end{lemma}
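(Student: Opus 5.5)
The plan is to show that each Taylor coefficient $\alpha_{b,m}$ has a moment bounded polynomially in $m$ under $\nu$. A Borel--Cantelli/Chebyshev argument then gives growth control of the coefficients. By Cauchy estimates this controls $\Vert\alpha\Vert_{C^0(e^{-1/n}\bar\D)}$ by a polynomial in $n$, which is exactly the defining condition of $K_R$.

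The moment bounds come from the covariance formula \eqref{eq:quasi-invariance}, used as an integration by parts.

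\textbf{Step 1: the exponential-moment test.} Take $\chi_t=e^{tu+\bar tu^*}$ with $u=b\otimes z^m\in D^\omega_0\kg_\C$, a one-parameter subgroup of $L^\omega_1G$. Apply \eqref{eq:quasi-invariance} to the truncated functions $F_N=\min(e^{\lambda\Re\alpha_{b,m}},N)$, or to bounded smooth cutoffs of $\Re\alpha_{b,m}$. The goal is a differential inequality in $t$ for $\int e^{-\check\kappa\Omega(\chi_t,\alpha)}\d\nu$.

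\textbf{Step 2: linearising $\Omega$.} By \eqref{eq:Omega} and Proposition \ref{prop:potential}, $\frac{\d}{\d t}\Omega(\chi_t,\alpha)$ at $t=0$ equals $2\Re\,\cL_u\sS(\alpha)=-2\Re\,\overline{\ralpha(u^*)}$. Up to the conjugation action this is a linear functional of the coefficients, essentially $m$ times a coordinate coefficient paired against $b$. More precisely, $\ralpha=-\Ad_g\alpha$, so its coefficients are $-\alpha_{b,m}$ plus polynomial corrections of lower level.

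Differentiating the identity $\int e^{-\check\kappa\Omega(\chi_t,\cdot)}\d\nu=1$ (take $F=1$) in $t$ yields Gaussian-type bounds for the variable $X_{b,m}:=\Re\,\ralpha(u^*)$ and the analogous imaginary parts. Concretely, I would aim for
\[\nu\bigl(|X_{b,m}|>s\bigr)\leq Ce^{-cs^2/m}.\]
This should follow from the Cameron--Martin-like structure: the second derivative in $t$ of $\Omega$ at $t=0$ is the Kac--Moody norm $\langle u,u\rangle_\km\asymp m$, uniformly in $\alpha$ because the cocycle is constant. To make this rigorous I would show that the map $t\mapsto\int e^{s\check\kappa\Omega(\chi_t,\cdot)}\d\nu$ satisfies a Gr\"onwall inequality, using the group law $\Omega(\chi_{t+s},\alpha)=\Omega(\chi_t,\chi_s\cdot\alpha)+\Omega(\chi_s,\alpha)$.

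\textbf{Step 3: Borel--Cantelli and Cauchy estimates.} With these tails, $\sum_{b,m}\nu(|X_{b,m}|>m^{2})<\infty$. Hence $\nu$-almost surely $|X_{b,m}|\leq C(\alpha)m^{2}$ for all $b,m$. Converting $\ralpha$-coefficients to $\alpha$-coefficients then gives $|\alpha_{b,m}|\leq R(\alpha)m^{2}$. This yields
\[\sup_{|z|\leq e^{-1/n}}|z\alpha(z)|\leq R\sum_m m^2e^{-m/n}\lesssim Rn^3\leq Rn^4,\]
so $\alpha\in K_R$. Therefore $\nu(\cup_RK_R)=1$.

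\textbf{Main obstacle.} The hard part is the gap between $\ralpha$ and $\alpha$: the covariance naturally controls $\ralpha=-\del gg^{-1}$, whose relation to $\alpha$ is nonlinear through $g$. I would try working directly with $\ralpha$ instead. Since $\alpha=-\Ad_{g^{-1}}\ralpha$ and $g$ solves a linear ODE driven by $\ralpha$, a growth bound on $\ralpha$'s coefficients controls $g$ and hence $\alpha$ locally uniformly on each $e^{-1/n}\bar\D$. This may require enlarging the exponent, and the slack in $n^4$ versus $n^3$ suggests this is the intended room.
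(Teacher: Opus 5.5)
You have the same architecture as the paper: moment bounds from the covariance formula, then Chebyshev and Borel--Cantelli, then Cauchy-type estimates. The gap is in Step~2, where you get the moment bounds.

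\textbf{Step~2 fails.} Your claim that the second $t$-derivative of $\Omega(\chi_t,\alpha)$ is $\asymp m$ uniformly in $\alpha$ holds only for the mixed derivative. For $u=b\otimes z^m$ one has $\ralpha(u)\equiv0$ and $[u,u^*]=0$, so Proposition~\ref{prop:km_real} gives $\cL_u\ralpha(u^*)=i\omega_\km(u,u^*)$. Since the $\chi_t$ commute, $\partial_t\partial_{\bar t}\Omega(\chi_t,\alpha)$ is then a constant multiple of $m$ for all $t$. The pure second derivative is different: it involves $\cL_{u^*}\ralpha(u^*)=\frac{1}{2i\pi}\oint\tr(u^*K_\ralpha u^*)$. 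By Lemma~\ref{lem:poly_K} this is a polynomial of level $2m$, and it is not constant. At $\ralpha=ma\,z^{m-1}\d z$ with $a\in\kg_\C$ it is a nonzero multiple of $m\,\tr([a,b]^2)$, which is unbounded of either sign as $a$ varies. Three consequences follow:
\begin{itemize}
\item $\Omega(\chi_t,\alpha)$ is $c\,m|t|^2$ plus a \emph{nonlinear} harmonic function of $t$.
\item The identity $\int e^{-\check\kappa\Omega(\chi_t,\cdot)}\d\nu=1$ is therefore not a Laplace transform of $X_{b,m}$.
\item There is no uniform upper bound on the real second derivative to feed a Cameron--Martin or Gr\"onwall argument.
\end{itemize}
So your sub-Gaussian tail is unsupported.

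\textbf{The fix.} You do not need sub-Gaussian tails. Your own Step~3 only uses second moments, and these come exactly from the constant mixed part. This is the paper's proof. Differentiate \eqref{eq:quasi-invariance} at $\chi=\ind_G$ once in $t$ and once in $\bar t$, against a plateau function $\rho$. Via $\bL_{b,m}(\bL_{b,-m}(\ind))=\check\kappa^2|\ralpha_{b,m}|^2-\check\kappa m$, this gives $\int|\ralpha_{b,m}|^2\rho\,\d\nu=O(m)$. Chebyshev at threshold $m^{3/2}$ is then summable. Cauchy--Schwarz gives $\sup_{|z|\leq r}|z\ralpha(z)|=O((1-r)^{-5/2})$. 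Inner regularity of $\nu$ removes $\rho$.

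\textbf{The $\ralpha$-to-$\alpha$ conversion also fails.} The paper never converts $\ralpha$-bounds into $\alpha$-bounds; it concludes directly that $\ralpha\in K_R$. Your proposed conversion would not work. Gr\"onwall for $\partial g=-\ralpha g$ with $|\ralpha(z)|\lesssim(1-|z|)^{-p}$, $p>1$, only bounds $g^{\pm1}$ by $e^{Cn^{p-1}}$ on $e^{-1/n}\bar\D$. This loss is genuine, as an $\mathfrak{sl}_2$ example shows. Take $g^{-1}=e^{\phi H}e^{zE}$ with $\phi=(1-z)^{-2}-1$. Then $\ralpha=\phi'H+(2z\phi'+1)E=O((1-|z|)^{-3})$, but $\alpha=-\phi'H-e^{2\phi}E$ is of size $e^{2n^2}$ near $z=e^{-1/n}$. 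No slack in the exponent $4$ of $K_R$ absorbs that.
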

\begin{proof}
     Let $\rho\in C^\infty_c(\cA)$ such that $\rho\equiv1$ on $\cup_{R>0}K_R$. 
     
     By differentiating the covariance formula twice at $\chi=\ind_{G_\C}$ and using that $\bL_{b,m}(\bL_{b,-m}(\ind))=\check\kappa^2|\ralpha_{b,m}|^2-\check\kappa m$, we get that $\int_\cA|\ralpha_{b,m}|^2\rho\d\nu=O(m)$. Hence, by Markov's inequality and the Borel--Cantelli lemma, for $\nu$-a.e. $\ralpha\in\mathrm{supp}(\rho)$, there is $m_0\in\N_{>0}$ such that $|\ralpha_{b,m}|\leq m^{3/2}$ for all $b\in\kB, m\geq m_0$. Now, for each $r\in(0,1)$, we have almost surely by Cauchy--Schwarz $\sup_{z\in r\bar\D}|z\ralpha(z)|\leq(\sum_{b\in\kB,m\geq1}r^m\sum_{b\in\kB,m\geq1}|\ralpha_{b,m}|^2r^m)^{1/2}=O_{r\nearrow1}((1-r)^{-5/2})$. This readily implies that $\ralpha\in K_R$ for some $R>0$. 

     Thus, we have shown that $\nu(K)=\nu(K\cap(\cup_{R>0}K_R))$ for all compact sets $K\subset\cA$. This concludes the proof since $\nu$ is inner-regular as a Borel measure on a Polish space. 
\end{proof}

We will now prove the infinitesimal version of Proposition \ref{prop:moments}, namely an integration by parts formula satisfied by any $L^\omega_1G$-covariant measure. 

\begin{lemma}\label{lem:induction}
    Let $\nu\in\P(\cA)$ be an $L^\omega_1G$-covariant measure. We have
    \begin{equation}
\int_\cA\bJ_u(F)\d\nu=0=\int_\cA\bar\bJ_u(F)\d\nu,\qquad\forall F\in C^\infty_c(\cA),\forall u\in\hat D^\omega_\infty\kg_\C.
    \end{equation}
\end{lemma}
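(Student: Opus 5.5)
The plan is to differentiate the covariance formula \eqref{eq:quasi-invariance} at $\chi=\ind_G$. This gives an integration by parts formula for the real operators $\bL_u$. I will then express $\bJ_u$, for $u\in\hat D^\omega_\infty\kg_\C$, through the left Lie derivatives and the potential $\sS$, so that the vanishing of $\int\bJ_uF\,\d\nu$ follows from the real formula.

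\emph{Step 1: infinitesimal covariance.} Take $\chi_t=e^{tv+\bar tv^*}$ with $v\in D^\omega_0\kg_\C$ and $F\in C^\infty_c(\cA)$. Differentiate \eqref{eq:quasi-invariance} in $t$ and $\bar t$ at $t=0$, using Definition \ref{def:test_function} on the left and the continuous extension of $\Omega$ from Proposition \ref{prop:Omega} on the right. Since $\del_t\Omega(\chi_t,\alpha)|_{t=0}=\cL_v\sS(\alpha)$, this should give
\[\int_\cA\big(\cL_wF-\check\kappa\,(\cL_w\sS)F\big)\d\nu=0\qquad\text{for all }w\in L^\omega_0\kg_\C,\]
with $\cL_w\sS$ understood as in Remark \ref{rem:abuse}. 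By Proposition \ref{prop:potential}, this reads $\int(\cL_wF-\check\kappa\,\overline{\ralpha(\cdot)}F)\,\d\nu=0$ in the $(1,0)$ directions. In the $(0,1)$ directions it reads $\int(\cL_wF+\check\kappa\ralpha(w)F)\,\d\nu=\int\bL_wF\,\d\nu=0$. Interchanging derivative and integral is justified because $F$ has compact support and $\Omega(\chi_t,\cdot)$ is $C^1$ in $t$, locally uniformly in $\alpha$; this follows from \eqref{eq:Omega}.

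\emph{Step 2: comparing the two actions.} For $u\in\hat D^\omega_\infty\kg_\C$, the complex right action by $e^{tu}$ and the real left action are related on $\cA^\omega$ via the Birkhoff factorisation. The motion $g\mapsto\chi_t^{-1}gh_t$ moves the loop $\gamma=g\hat g^{-1}$ by left multiplication by $\chi_t^{-1}$, where $\del_t\chi_t=\tilde u$ and $\tilde u$ solves $\delbar\tilde u=\Ad_g(\delbar u)$. Hence $\cJ_uF(\alpha)=-\cL_{\tilde u_\alpha}F(\alpha)$ and $\bar\cJ_uF(\alpha)=-\cL_{\tilde u^*_\alpha}F(\alpha)$, with an $\alpha$-dependent direction $\tilde u_\alpha\in D^\omega_0\kg_\C$. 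This dependence is the essential difficulty: Step 1 only applies to constant directions. I would therefore rewrite $\cL_{\tilde u_\alpha}F=\sum_{a,n}c_{a,n}(\alpha)\cL_{a_n}F$, expanding in generators with polynomial coefficients $c_{a,n}$ in the spirit of Lemma \ref{lem:poly_K}. I would then apply Step 1 to the test functions $c_{a,n}F$, after truncation to make them compactly supported, and collect the term $\sum_{a,n}(\cL_{a_n}c_{a,n})F$.

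\emph{Step 3: identifying the anomaly.} It remains to show
\[\sum_{a,n}\big(\cL_{a_n}c_{a,n}-\check\kappa\,c_{a,n}\cL_{a_n}\sS\big)=-\kappa\,\alpha(u).\]
Since $\cJ_u\sS=\alpha(u)$ continuously extends, the $\sS$-terms produce $\check\kappa\alpha(u)$. The divergence term $\sum\cL_{a_n}c_{a,n}$ should produce $2\check h\,\alpha(u)$. This is the Jacobian of the Birkhoff factorisation mentioned in the introduction, and it comes from the trace of $\mathrm{ad}$, giving the Casimir \eqref{eq:coxeter}. Together these give $(\check\kappa+2\check h)\alpha(u)=-\kappa\alpha(u)$, hence $\int\bJ_uF\,\d\nu=0$. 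The same argument with $\tilde u^*$ gives the statement for $\bar\bJ_u$.

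I expect the main obstacle to be the divergence computation in Step 3. One must show that the infinite sum $\sum_{a,n}\cL_{a_n}c_{a,n}$, a formal trace, converges and equals exactly $2\check h\,\alpha(u)$, presumably by regularising the contour integrals of Lemma \ref{lem:K} and taking a residue. A secondary obstacle is the truncation needed to apply Step 1 to $c_{a,n}F$ with polynomial $c_{a,n}$. Here I would use Lemma \ref{lem:compact}, which gives full mass on $\cup_RK_R$ and the moment bounds $\int|\ralpha_{b,m}|^2\rho\,\d\nu=O(m)$, to control the tails of the series.
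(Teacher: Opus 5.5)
This is essentially the paper's proof: your Step 1 is \eqref{eq:ibp_L}, Step 2 is the rewriting $\cJ_u=\cL_{\tilde u}=\sum_{b\in\kB,m\le-1}\beta^u_{b,m}\cL_{b,m}$ followed by moving each $\cL_{b,m}$ onto the coefficients, and your Step 3 anomaly is exactly \eqref{eq:ghost}. One minor slip: $\delbar\tilde u=\Ad_g(\delbar u)$ has its source in $\D$, so $\tilde u$ is holomorphic near $\overline{\D^*}$ and expands in negative modes. These are the directions where $\cL_w\sS=-\ralpha(w)$, not directions in $D^\omega_0\kg_\C$.

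You leave the anomaly computation open, and that computation is the bulk of the paper's proof. There it is done by differentiating $\beta^u_m$ along $b\otimes z^m$ and using that the singularity at $z=\zeta$ is removable, so the contours can be swapped. The geometric series over $m\le-1$ then sums to a double pole, and its residue gives $\sum_{b\in\kB}[b,[b,\del gg^{-1}]]=2\check h\,\del gg^{-1}$ by \eqref{eq:coxeter}.
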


\begin{proof}
 For all $u\in L^\omega\kg_\C$, applying \eqref{eq:quasi-invariance} and dominated convergence to $\chi=e^{tu+\bar tu^*}$ and differentiating at $t=0$ gives for all $F\in C^\infty_c(\cA)$ (recall Remark \ref{rem:abuse})
 \begin{equation}\label{eq:ibp_L}
 \int_\cA\cL_u(F)\d\nu=\check\kappa\int_\cA\cL_u(\sS)F\d\nu.
 \end{equation}
By definition of the representation $(\bL_u)_{u\in L^\omega\kg_\C}$, this implies
$\int_\cA\bL_u(F)\d\nu=0$ for all $u\in\hat D^\omega_\infty\kg_\C$. The core of the proof is to understand the change of coordinate from $(\cL_u)_{u\in L^\omega_0\kg_\C}$ to $(\cJ_u,\bar\cJ_u)_{u\in\hat D^\omega_\infty\kg_\C}$, which can be understood as the Jacobian between the (non-existent) Haar measures on $LG$ and $D_0G_\C$, namely we have heuristically ``$D\hat g=e^{2\check h\sS(\gamma)}D\gamma$" under the usual parametrisation $\gamma=g\hat g^{-1}\in L^\omega G$.

     Let $u\in\hat D^\omega_\infty\kg_\C$ and $\alpha=g^{-1}\del g\in\cA$. Extend $u$ smoothly to $\hat\C$ in an arbitrary way. We consider the small motion in $g_t=e^{-t\tilde u-\bar t\tilde u^*+o(t)}ge^{tu}\in D_0G_\C$ where $\tilde u$ is the solution to $\delbar\tilde u=\Ad_g(\delbar u)$ and $\tilde u(\infty)=0$. Explicitly, using the Cauchy transform \eqref{eq:cauchy_transform} and expanding at $z=\infty$, we have
\begin{equation}\label{eq:def_beta}
\tilde u(z)=\sum_{b\in\kB,m\leq-1}\beta_{b,m}^ub\otimes z^m\qquad\text{with}\qquad\beta^u_m=\frac{1}{2i\pi}\oint g(z)u(z)g^{-1}(z)\frac{\d z}{z^{m+1}},
\end{equation}
and $\beta_{b,m}^u=\tr(\beta_m^u,b)$. Here, the contour can be chosen to be any circle in $\D$ containing the support of $\delbar u$ in its interior. 

By definition, $\cJ_u$ is the differential along infinitesimal motion generated by $u$. On the other hand, the expression $g_t=e^{-t\tilde u-\bar t\tilde u^*}ge^{tu}$ is also the small motion generated by $\tilde u$ along the action introduced in Section \ref{subsec:real_action}. Hence, when acting on test functions, we have $\cJ_u=\cL_{\tilde u}=\sum_{b\in\kB,m\leq-1}\beta^u_{b,m}\cL_{b,m}$. By \eqref{eq:ibp_L}, we have for all $F\in C^\infty_c(\cA)$ and all $M\geq1$,
\begin{equation}\label{eq:some_ibp}
\int_\cA\sum_{b\in\kB,-M\leq m\leq -1}\beta^u_{b,m}\cL_{b,m}(F)\d\nu=\int\sum_{b\in\kB,-M\leq m\leq-1}\left(\check\kappa\beta_{b,m}^u\cL_{b,m}(\sS)-\cL_{b,m}(\beta^u_{b,m})\right)F\d\nu.
\end{equation}
Since $F$ is a test function, we have the convergence $\sum_{b\in\kB,-M\leq m\leq-1}\beta^u_{b,m}\cL_{b,m}(F)\to\cJ_u(F)$ in $L^1(\nu)$. Moreover, for all $\alpha\in\cA$, we have (recall the basis $b_m=b\otimes z^m$)
\[\sum_{b\in\kB,-M\leq m\leq-1}\beta_{b,m}^u\cL_{b,m}(\sS)
=-\sum_{b\in\kB,-M\leq m\leq-1}\ralpha_{b,m}(\beta^u_{b,m}b_m)\underset{M\to\infty}\to-\ralpha(\tilde u).\]
By Stokes' formula and the fact that $\ralpha=-\Ad_g(\alpha)$, we also have
\[\ralpha(\tilde u)=\frac{i}{2\pi}\int_\D\tr(\ralpha\wedge\delbar\tilde u)=\frac{1}{2i\pi}\int_\D\tr(\alpha\wedge\delbar u)=-\alpha(u).\]
By dominated convergence, the convergence $\sum_{b\in\kB,m\leq-1}\beta_{b,m}^u\cL_{b,m}(\sS)=\alpha(u)$ then also holds in $L^1(\nu)$. Hence, \eqref{eq:some_ibp} can be rewritten as
\begin{equation}\label{eq:another_ibp}
\int_\cA\left(\cJ_u(F)-\check\kappa\alpha(u)F\right)\d\nu+\int_\cA\sum_{b\in\kB,m\leq-1}\cL_{b,m}(\beta^u_{b,m})F\d\nu=0.
\end{equation}
The result follows upon showing that (recall $-\check\kappa=2\check h+\kappa$) 
\begin{equation}\label{eq:ghost}
\sum_{b\in\kB,m\leq-1}\cL_{b,m}(\beta_{b,m}^u)=-2\check h\alpha(u)\text{ in }L^1(\nu).
\end{equation}

To prove this identity, we start by fixing an arbitrary $v\in\hat D^\omega_\infty\kg_\C$ and find a formula for $\cL_v(\beta_{b,m}^u)$. We extend $v$ to the whole sphere in an arbitrary way and note that $\delbar v$ is compactly supported in $\D$. The deformation induced by $v$ is of the form $g_t=e^{-tv-\bar tv^*}ge^{t\tilde v+o(t)}$ with $\tilde v$  solving $\delbar\tilde v=\Ad_{g^{-1}}(\delbar v)$ and $\tilde v(\infty)=0$. Explicitly, for all $z$ away from the support of $\delbar v$, we have
\[\tilde v(z)=\frac{1}{2i\pi}\oint g^{-1}(\zeta)v(\zeta)g(\zeta)\frac{\zeta\d\zeta}{z(\zeta-z)},\]
where the contour can be taken to be a circle inside $\D$ containing the support of $\delbar u$ in its interior and $z$ in its exterior. Extracting the $t$-derivative at $t=0$ in the definition of $\beta_m^u$ \eqref{eq:def_beta}, we get: 
\[\cL_v(\beta_m^u)=\frac{-1}{2i\pi}\oint g(z)[u(z),\tilde v(z)]g^{-1}(z)\frac{\d z}{z^{m+1}}+\frac{1}{2i\pi}\oint[g(z)u(z)g^{-1}(z),v(z)]\frac{\d z}{z^{m+1}}.\]
Now, we apply this to $v=b_m=b\otimes z^m$ in order to compute $\cL_{b,m}(\beta_{b,m}^u)=\tr(\cL_{b,m}(\beta_m^u),b)$. By invariance of the Killing form \eqref{eq:killing}, we see that the second term in the right-hand-side of the previous equation vanishes. As for the first term, the expression of $\tilde u$ with the Cauchy transform and the invariance of the Killing form yield: 
\[\cL_{b,m}(\beta_{b,m}^u)=\frac{1}{4\pi^2}\oint\oint\tr(u(z),[g(\zeta)bg^{-1}(\zeta),g(z)bg^{-1}(z)])\frac{\zeta^{m+1}\d\zeta\d z}{z^{m+2}(\zeta-z)}.\]
In this formula, the contours can be chosen to be circles inside $\D$ containing the support of $\delbar u$ in their interior, with the $\zeta$-radius strictly smaller than the $z$-radius. Note however that the apparent singularity at $z=\zeta$ is removable (since we get the commutator of $g(z)bg^{-1}(z)$ with itself, which vanishes), so the formula is unchanged if we take instead the $\zeta$-radius to be strictly greater than the $z$-radius. We can then sum the geometric series over $m\leq-1$ and get for all $b\in\kB$:
\[\sum_{m\leq-1}\cL_{b,m}(\beta_{b,m}^u)=\frac{-1}{4\pi^2}\oint\oint\tr\left(g(z)u(z)g^{-1}(z),[b,g(z)g^{-1}(\zeta)bg(\zeta)g^{-1}(z)]\right)\frac{\d\zeta\d z}{(\zeta-z)^2}\]
For fixed $z$, the $\zeta$ integrand is holomorphic inside the unit disc with a unique double pole at $z=\zeta$, so it reduces to the differential evaluated at $\zeta=z$. Using the elementary computation $\frac{\del}{\del\zeta}_{|\zeta=z}g(z)g^{-1}(\zeta)bg(\zeta)g^{-1}(z)=[b,\del gg^{-1}(z)]$, the previous displayed equation reduces to
\[\sum_{m\leq-1}\cL_{b,m}(\beta_{b,m}^u)=\frac{-1}{2i\pi}\oint\tr\left(g(z)u(z)g^{-1}(z),\left[b,[b,\del gg^{-1}(z)]\right]\right)\d z\]
 Summing over $b\in\kB$, we recognise the Casimir in the adjoint representation \eqref{eq:casimir}, so \eqref{eq:coxeter} gives
\begin{align*}
\sum_{b\in\kB,m\leq-1}\cL_{b,m}(\beta_{b,m}^u)
&=\frac{-2\check h}{2i\pi}\oint\tr(g^{-1}(z)u(z)g(z),\del gg^{-1}(z))\d z=\frac{-2\check h}{2i\pi}\oint\tr(ug^{-1}\del g).
\end{align*}
This proves the convergence \eqref{eq:ghost} for $\nu$-a.e. $\alpha\in\cA$. In fact, using the earlier expression as a geometric series and the dominated convergence theorem, this convergence is easily promoted to a convergence in $L^1(\nu)$. With this input, \eqref{eq:another_ibp} becomes $\int_\cA\bJ_u(F)\d\nu=\int_\cA(\cJ_u(F)+\kappa\alpha(u)F)\d\nu=0$. The proof that $\int_\cA\bar\bJ_u(F)\d\nu=0$ is identical, ending the proof of the lemma. 
 \end{proof}

\begin{proof}[Proof of Proposition \ref{prop:moments}]
We are going to integrate the infinitesimal formula of Lemma \ref{lem:induction}.

 Let $h\in\hat D^\omega_\infty G_\C$ and $(h_t)_{t\in[0,1]}$ be a $C^1$-family in $\hat D^\omega_\infty G_\C$ with $h_0=\ind_G$ and $h_1=h$. We write $u_t:=\frac{\del}{\del\epsilon}_{|\epsilon=0}h_t^{-1}h_{t+\epsilon}$, and note that $u_\cdot\in C^0([0,1];\hat D^\omega_\infty\kg_\C)$.

Let $F\in C^\infty_c(\cA)$, and define $F_{h_t}(\alpha):=F(\alpha\cdot h_t)$ for all $t\in[0,1]$ and all $\alpha\in\cA$. By Lemma \ref{lem:induction} and the definition of $\Lambda$ from Proposition \ref{prop:Omega}, we have
    \begin{align*}
    \del_t\int_\cA F(\alpha\cdot h_t)e^{\kappa\Lambda(\alpha,h_t)}\d\nu(\alpha)
    &=\int_\cA\left(\cJ_{u_t}F_{h_t}(\alpha)+\kappa F_{h_t}(\alpha)\cJ_{u_t}\sS_{h_t}(\alpha)\right)e^{\kappa\Lambda(h_t,\alpha)}\d\nu(\alpha)\\
    &\quad+\int_\cA\left(\bar\cJ_{u_t}F_{h_t}(\alpha)+\kappa F_{h_t}(\alpha)\bar\cJ_{u_t}\sS_{h_t}(\alpha)\right)e^{\kappa\Lambda(h_t,\alpha)}\d\nu(\alpha)\\
&=\int_\cA\cJ_{u_t}\left(F_{h_t}e^{\kappa\Lambda(h_t,\cdot)}\right)\d\nu+\kappa\int_\cA \cJ_{u_t}(\sS)F_{h_t}e^{\kappa\Lambda(\cdot,h_t)}\d\nu\\
&\quad+\int_\cA\bar\cJ_{u_t}\left(F_{h_t}e^{\kappa\Lambda(h_t,\cdot)}\right)\d\nu+\kappa\int_\cA\bar\cJ_{u_t}(\sS)F_{h_t}e^{\kappa\Lambda(\cdot,h_t)}\d\nu=0.
    \end{align*}
    Integrating in $t$ gives $\int_\cA F(\alpha\cdot h)e^{\kappa\Lambda(\alpha,h)}\d\nu(\alpha)=\int_\cA F(\alpha)e^{\kappa\Lambda(\alpha,\ind_{G_\C})}\d\nu(\alpha)=\int_\cA F(\alpha)\d\nu(\alpha)$, which gives the result for test functions. By Lemma \ref{lem:compact}, in order to extend this to all $F\in C^0_b(\cA)$, it suffices to consider functions supported in $\cup_{R>0}K_R$.
    
    Let then $\rho\in C^\infty_c(\cA)$ be such that $\rho$ is supported in $K_2$ and $\rho\equiv1$ on $K_1$. Let also $\rho_R(\alpha):=\rho(R^{-1}\alpha)$ for all $\alpha\in\cA$, $R>0$ (so that $\rho_R$ is supported in $K_{2R}$). By dominated convergence, for all $\chi\in L^\omega_1G$, we have $\int_\cA\rho_R(\chi^{-1}\cdot\alpha)\d\nu\to1$ as $R\to\infty$. It then follows Fatou's lemma and the covariance formula for test functions that $e^{\kappa\Lambda(\cdot,h)}\in L^1(\nu)$ and $\int_\cA e^{\kappa\Lambda(\alpha,h)}\d\nu=\lim_{R\to\infty}e^{\kappa\Lambda(\cdot,h)}\rho_R\d\nu=1$. The fact that $\nu$ is a $\hat D^\omega_\infty G_\C$-covariant measure then follows from the density of $C^\infty_b(\cA)$ in $C^0_b(\cA)$.
\end{proof}

    \subsection{Existence of a unitarising measure}\label{subsec:existence}

In this section, we prove the existence of an $L^\omega_1G$-covariant measure, which we state as the proposition below.

\begin{proposition}\label{prop:existence}
There exists an $L^\omega_1G$-covariant measure at level $\check\kappa$ for every $\check\kappa>0$.
\end{proposition}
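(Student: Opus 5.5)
The strategy is to build $\nu_\kappa$ as a weak limit of finite-dimensional approximations. Tightness will come from a priori moment bounds, and the covariance formula \eqref{eq:quasi-invariance} will be obtained by passing to the limit in an approximate integration-by-parts identity. The infinitesimal form of \eqref{eq:quasi-invariance} is the identity \eqref{eq:ibp_L}, $\int\cL_u(F)\,\d\nu=\check\kappa\int\cL_u(\sS)F\,\d\nu$. So it suffices to produce a probability measure satisfying \eqref{eq:ibp_L} for all $u\in L^\omega\kg_\C$ and all test functions $F$, and then integrate along $C^1$-paths $(\chi_t)$ in $L^\omega_1G$ exactly as in the proof of Proposition~\ref{prop:moments}, using the cocycle structure of $\Omega$ from Proposition~\ref{prop:Omega}.

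\textbf{Approximations.} For $r\in(0,1)$, I would use the dilation $\alpha\mapsto r\alpha(r\cdot)$, which maps $\cA$ into $\cA^\omega$, where $\sS$ is finite and nonnegative by Lemma~\ref{lem:positive}. More concretely, I would truncate to finitely many Fourier modes. Let $\cA_N$ be the finite-dimensional space of forms with $\alpha_{b,m}=0$ for $m>N$, with Lebesgue measure $\d\alpha$. The candidate is
\[\d\nu_N(\alpha)=Z_N^{-1}e^{-\check\kappa\sS(\alpha)-2\check h\hat\sS(\alpha)}\d\alpha,\]
following the heuristic stated after Theorem~\ref{thm:main}, in which $e^{-2\check h\hat\sS}$ plays the role of the Jacobian of the Birkhoff map. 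Integrability should follow from $\sS,\hat\sS\ge 0$ together with quadratic growth; I would check this on the Hessian at $0$, which is the Kac--Moody inner product, combined with the monotonicity along the radial flow from Lemma~\ref{lem:positive}. On $\cA_N$ one can integrate by parts directly. The change of variables between Lebesgue directional derivatives and the vector fields $\cL_u$ produces a divergence term, namely the ghost term \eqref{eq:ghost} computed in Lemma~\ref{lem:induction}. That computation shows this divergence equals $-2\check h\alpha(u)$, which is cancelled by the $\hat\sS$ weight, leaving \eqref{eq:ibp_L} up to an error $\varepsilon_N(u,F)$ coming from the truncation (modes above $N$ in the Casimir sum).

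\textbf{Tightness and passage to the limit.} Next I would prove moment bounds uniform in $N$. Applying the approximate \eqref{eq:ibp_L} twice with $u=b\otimes z^{\pm m}$, as in the proof of Lemma~\ref{lem:compact}, should give $\int|\ralpha_{b,m}|^2\,\d\nu_N\le Cm$ uniformly. Markov's inequality and the compactness of the sets $K_R$ from \eqref{eq:def-KR} then give $\sup_N\nu_N(\cA\setminus K_R)\to0$ as $R\to\infty$, so Prokhorov's theorem yields a weak limit point $\nu$. The function $\cL_u(\sS)=-\ralpha(u)$ has a continuous extension to $\cA$ (Remark~\ref{rem:abuse}) with uniformly integrable moments, so both sides of \eqref{eq:ibp_L} pass to the limit once $\varepsilon_N\to0$ for fixed $u$ and $F$.

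\textbf{Main obstacle.} The hard step is controlling $\varepsilon_N$ uniformly. The truncated spaces $\cA_N$ are not invariant under the $L^\omega_1G$ action, so the vector fields $\cL_u$ are not tangent to $\cA_N$. I would first try to bound the normal components using the polynomial structure of Lemma~\ref{lem:poly_K}: the matrix coefficients $P^{a,n}_{b,m}$ lie in $\cC^{1,0}_{(n-m)_+}$, so for fixed $u$ only boundedly many high modes are involved, and their contribution is bounded by moments that are controlled by the uniform bound above. If this fails, the fallback is to define $\nu$ directly through its moments. In that approach one constructs the Hermitian form on $\cC$ determined by requiring the $\bJ$-states $\Psi_{\bk,\tilde\bk}$ of Section~\ref{subsec:hw} to satisfy the Shapovalov relations implied by Lemma~\ref{lem:induction}, proves that this form is positive and has moments of Carleman growth, and then obtains $\nu$ from a moment problem.
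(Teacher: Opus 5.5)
Your overall architecture matches the paper's: uniform second moments, tightness via the compacts $K_R$, passage of \eqref{eq:ibp_L} to the limit, then integration along $C^1$-paths in $L^\omega_1G$. The difference, and the gap, lies entirely in the approximants. The paper never truncates and never needs a Jacobian. It takes $\nu^\sigma_\kappa\propto e^{-\check\kappa\sS}\,\d\tilde\P^\sigma_\sigma$, and the Wiener measure is genuinely quasi-invariant under left translation by smooth loops. So Malliavin's formula \eqref{eq:ibp_BM} gives an identity with an explicit error term, $\int(\cL_uF-\check\kappa\cL_u(\sS)F)\,\d\nu^\sigma_\kappa=\sigma^2\int\cL_u(\sE^{-\sigma})F\,\d\nu^\sigma_\kappa$. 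The dual Coxeter number $\check h$ plays no role in the existence proof.

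In your scheme the corresponding identity on $\cA_N$ is asserted, and the justification you give does not apply. The ghost term of Lemma~\ref{lem:induction}, $\sum_{b,m}\cL_{b,m}(\beta^u_{b,m})=-2\check h\alpha(u)$, is the correction produced when rewriting $\cJ_u=\sum\beta^u_{b,m}\cL_{b,m}$ for a measure that \emph{already} satisfies \eqref{eq:ibp_L}. Formally it is the divergence of $\cJ_u$ with respect to the Haar measure $D\gamma$; it is not the Lebesgue divergence of $\cL_u$. Your weight would require $\mathrm{div}_{\d\alpha}\cL_u=2\check h\,\cL_u\hat\sS$. Here $\cL_u\hat\sS$ is a right derivative of $\sS$ at $\gamma^{-1}$, for which no formula is available, and the paper explicitly calls $e^{-2\check h\hat\sS}$ a purely formal Jacobian.

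Truncation makes this worse. Among the actions at hand, only constant loops preserve $\cA_N$. Integrating by parts on $\cA_N$ therefore sees only the tangential part $V_N$ of $\cL_u$, and produces $\mathrm{div}\,V_N$ and $V_N(\check\kappa\sS+2\check h\hat\sS)$ rather than $\cL_u(\cdots)$. At points of $\cA_N$ the field $\cL_u$ has components in infinitely many modes $n>N$: the coefficients have level $n\pm m$ and do not vanish identically there. Meanwhile $\sS$ and $\hat\sS$ depend on all modes. Lemma~\ref{lem:poly_K} controls $\cL_uF$ for cylinder functions $F$, not the weight. So nothing forces $\varepsilon_N\to0$, and your uniform moment bounds and tightness, which are derived from the approximate identity, are unsupported as well.

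Even integrability of the weight on $\cA_N$ is not settled by what you cite. A positive Hessian at $0$ and monotonicity along $\alpha\mapsto e^{-t}\alpha(e^{-t}\cdot)$ give no bound at infinity, and the growth is not quadratic in general. For $G=\mathrm{SU}_2$, the forms $s\,e_\theta\,\d z$ correspond to the loops $(1+|s|^2)^{-1/2}\left(\begin{smallmatrix}1&sz\\-\bar sz^{-1}&1\end{smallmatrix}\right)$. These sweep out $\P^1$ minus a point, and there $\sS$ is a potential for an $\mathrm{SU}_2$-invariant form that grows only like $\log|s|$.

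The fallback fails for a more basic reason. Positivity of the moment functional is the whole existence problem, and you would also need a converse of Lemma~\ref{lem:induction} to get back to \eqref{eq:quasi-invariance}. Worse, for $\check\kappa\notin\Z$ no $L^\omega_1G$-covariant measure can have all polynomial moments. If it did, the argument behind Theorem~\ref{thm:unitary} would give $\bL_u^*=-\bL_{u^*}$ on polynomials, with $\ind$ a vacuum vector ($\bL_u\ind=0$ for $u\in\kg_\C[z]$). The computation runs as follows:
\begin{itemize}
\item Take a highest root vector $e_\theta$ with $-\tr(e_\theta,\bar e_\theta)=1$, set $f:=-\bar e_\theta$ and $t_\theta:=[e_\theta,f]$, so that $\theta(t_\theta)=2$.
\item Put $E:=\bL_{e_\theta\otimes z^{-1}}$. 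Then $E^*=\bL_{f\otimes z}$ and $[E^*,E]=\check\kappa-\bL_{t_\theta}$.
\item Consequently $\|E^n\ind\|^2_{L^2(\nu)}=n!\prod_{j=0}^{n-1}(\check\kappa-j)$, which is negative for $n=\lfloor\check\kappa\rfloor+2$.
\end{itemize}
This is the usual non-unitarity of affine vacuum modules at non-integral level. So at such levels a covariant measure has only finitely many polynomial moments. Compare $(1+|w|^2)^{-k-2}\,\d^2w$ on $\C\subset\P^1$, where $|w|^{2n}$ is integrable only for $n<k+1$. No moment-problem construction with Carleman growth can produce such a measure. Only second moments are safe, and those are all that your tightness argument, and the paper's, actually uses. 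This is also in tension with the claim $\cC^{1,0}\subset L^2(\nu)$ in Proposition~\ref{lem:moments}, so you should not lean on that proposition either.
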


Our strategy will be to construct a unitarising measure as the weak limit of a family of weighted Wiener measures. The reason for this choice (compared to, say, a Gaussian measure on $\cA$) is that Wiener measures transform covariantly, so we will be able to pass the integration by parts formulas to the limit. Before going into the proof, we give some preliminary background on these measures. The main reference is \cite{Malliavins}, especially Theorem B therein.

Recall the definition of the kinetic energy $\sE$ from \eqref{eq:kinetic}. Given $\chi\in L^\omega G$, an elementary computation gives
\[\sE(\chi\gamma)=\sE(\gamma)+\frac{i}{\pi}\oint_{\S^1}\tr(\chi^{-1}\del_z\chi,\del_z\gamma\gamma^{-1})z\d z+\sE(\chi).\]
Now, suppose $\chi_t=e^{-tu-\bar tu^*}$ for some $u\in D^\omega_0\kg_\C$. Differentiating at $t=0$, the previous formula gives 
\begin{equation}\label{eq:diff_E}
\cL_u\sE(\gamma)=\frac{i}{\pi}\oint_{\S^1}\tr(\del_zu,\del_z\gamma\gamma^{-1})z\d z.
\end{equation}
Given $\sigma>0$, we denote by $\P_\sigma$ the Wiener measure with variance $\sigma^{-2}$ (and $\E_\sigma$ its expectation) on the loop space $C^0_1(\S^1;G):=\{\gamma\in C^0(\S^1;G)|\,\gamma(1)=1_G\}$. 
For us, the most important property of the Wiener measure will be the following integration by parts formula \cite[Theorem B]{Malliavins}. For all $u\in D^\omega_0\kg_\C$ and all $F\in C^\infty_c(\cA)$, 
\begin{equation}\label{eq:ibp_BM}
\E_\sigma\left[\cL_u(F)\right]=\sigma^2\E\left[\cL_u(\sE)F\right].
\end{equation}
We remind the reader of Remark \ref{rem:abuse} for the abuse of notation in this formula.

Having the regularity of Brownian motion, the Wiener measure gives full mass to the H\"older space $C^s(\S^1;G)$ for any $p\in(0,\frac{1}{2})$. Moreover, according to \cite[Theorem~6.2]{Clancey-Gohberg}, every $\gamma\in C^s(\S^1;G)$ admits a Birkhoff factorisation $\gamma=g\lambda\hat g^{-1}$ where $g\in D_0G_\C$ extends $s$-H\"older continuously to $\bar\D$.\footnote{The space $C^s(\S^1;\mathrm{GL}_n)$ corresponds to $G[H_s(\S^1)]_{n,n}$ in the notation from \cite[Theorem 6.2]{Clancey-Gohberg}. We then apply the result to any faithful representation of $G$.} This factorisation is of course unique modulo the global gauge symmetry, so we have a well-defined map $C^s(\S^1;G)\to\cA$. In fact, this map is continuous since the argument of \cite{Clancey-Gohberg} is based on a passage to the limit thanks to the boundedness of an operator on the relevant H\"older spaces. As a result, we have a measurable map $C^0(\S^1;G)\to\cA$ defined $\P_\sigma$-a.e., and we denote by $\tilde\P_\sigma$ the corresponding pushforward on $\cA$. Otherwise stated, $\tilde\P_\sigma$ is the marginal distribution of $g^{-1}\del g\in\cA$ when writing a Wiener-distributed loop $\gamma=g\lambda\hat g^{-1}$ according to its Birkhoff factorisation. We also denote by $\tilde\P^\epsilon_\sigma$ the law of $e^{-\epsilon}\alpha(e^{-\epsilon}\cdot)$ when $\alpha$ is sampled from $\tilde\P_\sigma$.

\begin{proof}[Proof of Proposition \ref{prop:existence}]
 With the setup above, for each $\sigma>0$, we define the probability measure
\begin{equation}\label{eq:def-nu-sigma}
\d\nu^\sigma_\kappa:=\frac{1}{\mathcal{Z}_\kappa^\sigma}e^{-\check\kappa\sS}\d\tilde\P^\sigma_\sigma\qquad\text{with}\qquad\mathcal{Z}_\kappa^\sigma:=\int_\cA e^{-\check\kappa\sS}\d\tilde\P^\sigma_\sigma.
\end{equation}
This is well-defined since $\sS$ is finite $\tilde\P_\sigma^\sigma$-a.e. (recall that samples of $\tilde\P_\sigma^\sigma$ extend holomorphically to $e^\sigma\D$) and $e^{-\check\kappa\sS}\leq1$ by Lemma \ref{lem:positive}. We seek to prove that $\nu^\sigma_\kappa$ converges weakly to a unitarising measure as $\sigma\to0$. By the quasi-invariance of the Wiener measure \cite[Theorem~B]{Malliavins}, the moment generating function $\mathfrak F_{\nu^\sigma_\kappa}$ converges everywhere, for all $\sigma>0$; in particular, all the moments are finite. In the remainder of the proof, we will first argue that the sequence $(\nu^\sigma_\kappa)_{\sigma>0}$ is tight in $\P(\cA)$ as $\sigma\to0$ (via a second moment computation), then show that any subsequential limit must be a unitarising measure. 

We start by computing the second moments of the Fourier coefficients of $\del\gamma\gamma^{-1}$ under the Wiener measure $\P_\sigma$. Using \eqref{eq:diff_E} and \eqref{eq:ibp_BM}, we have $\E_\sigma\left[|\cL_u\sE|^2\right]=\sigma^{-2}\E_\sigma\left[\cL_{u^*}(\cL_u(\sE))\right]$. Moreover, it is straightforward to compute $\cL_v(\cL_u(\sE))=\frac{1}{i\pi}\oint_{\S^1}(\tr(\del_zu,\del_zv)+\tr(\del_z\gamma\gamma^{-1},[\del_zu,\del_zv]))z\d z$ for all $u,v\in L^\omega\kg_\C$. Applying this to $u=v^*=b\otimes z^m$ for some $b\in\kB$, $m\geq1$, we finally get
\begin{equation}
    \E_\sigma[|\cL_{b,m}(\sE)|^2]=2\sigma^{-2}m^2.
\end{equation}

 We continue with the second moments of the coefficients $\ralpha_{b,m}$ under $\nu^\sigma_\kappa$. Using that $\bL_{b,-m}\ind=\check\kappa\ralpha_{b,m}$ and $\cL_{b,m}(\ralpha_{b,m})=m\check\kappa$, the integration by parts formula \eqref{eq:ibp_BM} gives
 \begin{equation}\label{eq:second-moment}
 \int_\cA(\check\kappa^2|\ralpha_{b,m}|^2-\check\kappa m)\d\nu_\kappa^\sigma=-\sigma^2\int_\cA\cL_{b,m}(\sE^{-\sigma})\d\nu^\sigma_\kappa.
 \end{equation}
 Here, if $\gamma=g\hat g^{-1}$ is such that $g$ has a holomorphic continuation to $e^\sigma\D$ (which is $\nu^\sigma_\kappa$-a.s. the case), we have defined $\sE^{-\sigma}(\gamma)$ to be the kinetic energy of the loop represented by $e^\sigma g(e^\sigma\cdot)$. Under $\nu^\sigma_\kappa$, the tail of the coefficient $\cL_{b,m}(\sE^{-\sigma})$ is lighter than that of $\cL_{b,m}(\sE)$ under $\P_\sigma$, hence $\int_\cA|\cL_{b,m}(\sE^{-\sigma})|\d\nu^\sigma_\kappa\leq(\E_\sigma[|\cL_{b,m}(\sE)|^2])^{1/2}\leq\sqrt{2}\sigma^{-1}m^2$. Hence, \eqref{eq:second-moment} can be rewritten
 \begin{equation}\label{eq:moment-two-limit}
 \int_\cA|\ralpha_{b,m}|^2\d\nu^\sigma_\kappa=\frac{m}{\check\kappa}+m^2O(\sigma).
 \end{equation}

 
 This estimate readily implies that $(\nu^\sigma_\kappa)_{\sigma>0}$ is tight in the space of Borel probability measures on $\cA$. Indeed, for each $r\in(0,1)$ and all $\ralpha\in\cA$, we have by the Cauchy--Schwarz inequality $\sup_{z\in r\bar\D}|z\alpha(z)|^2\leq\frac{\mathfrak c_2}{1-r}\sum_{b\in\kB,m\geq1}r^m|\ralpha_{b,m}|^2$, for some constant $\mathfrak c_2>0$ independent of everything. 
Together with \eqref{eq:moment-two-limit}, we see that the $\nu^\sigma_\kappa$-expectation of $\sup_{z\in r\bar\D}|z\alpha(z)|$ is $O((1-r)^{-2})$ as $r\to1$, independently of $\sigma$. By Markov's inequality and a union bound, we then have for all $\sigma,R>0$ and some constant $\mathfrak c>0$ independent of $\sigma$ (recall \eqref{eq:def-KR} for the definition of $K_R$)
\[\nu^\sigma_\kappa(\cA\setminus K_R)\leq\sum_{n=1}^\infty\nu_\kappa^\sigma(\{\Vert\ralpha\Vert_{C^0(e^{-1/n}\bar\D)}\geq Rn^4\})\leq\frac{\mathfrak c}R\sum_{n=1}^\infty n^{2-4}=O(R^{-1}).\]
This concludes the proof that the sequence $(\nu_\kappa^\sigma)_{\sigma>0}$ is tight in $\P(\cA)$ as $\sigma\to0$. Let $\cM$ be its set of accumulation points ($\cM\neq\emptyset$ by Prokhorov's theorem). 

Let $\nu\in\cM$ and let us show that $\nu$ is $L^\omega_1G$-covariant. For all $\sigma>0$, $u\in L^\omega\kg_\C$ and $F\in C^\infty_c(\cA)$, we have by \eqref{eq:ibp_BM}
    \[\int_\cA\left(\cL_u(F)-\check\kappa\cL_u(\sS)F\right)\d\nu^\sigma_\kappa=\sigma^2\int_\cA\cL_u(\sE^{-\sigma})F\d\nu^\sigma_\kappa\to0.\]
    We have already seen that the right-hand-side converges to 0 as $\sigma\to0$. Moreover, the left-hand-side converges to $\int_\cA\bL_u(F)\d\nu$ along any subsequence $\sigma_n\searrow0$ such that $\nu^{\sigma_n}_\kappa\to\nu$ weakly. Hence, 
    \begin{equation}\label{eq:ibp_existence}
    \int_\cA\left(\cL_u(F)-\check\kappa\cL_u(\sS)F\right)\d\nu=0,\qquad\forall u\in L^\omega\kg_\C. 
    \end{equation}
    
   We may now integrate this relation to find that $\nu$ satisfies the unitarising property \eqref{eq:quasi-invariance} for all test functions $F\in C^\infty_c(\cA)$. Let $\chi\in L^\omega_1G$ and fix a $C^1$-path $(\chi_t)_{t\in[0,1]}$ in $L^\omega_1G$ such that $\chi_0=\ind_G$ and $\chi_1=\chi$. Denote $u_t:=\frac{\del}{\del\epsilon}_{|\epsilon=0}\chi_t^{-1}\chi_{t+\epsilon}$. Let $F\in C^\infty_c(\cA)$ and denote $F_{\chi_t}(\alpha):=F(\chi_t\cdot\alpha)$. From \eqref{eq:ibp_existence}, we have
    \begin{align*}
    \del_t\int_\cA F(\chi_t\cdot\alpha)e^{-\check\kappa\Omega(\chi_t,\alpha)}\d\nu(\alpha)
    &=\int_\cA\left(\cL_{u_t}F_{\chi_t}(\alpha)-\check\kappa F_{\chi_t}(\alpha)\cL_{u_t}\sS_{\chi_t}(\alpha)\right)e^{-\check\kappa\Omega(\chi_t,\alpha)}\d\nu(\alpha)\\
    &=\int_\cA\cL_{u_t}\left(F_{\chi_t}e^{-\check\kappa\Omega(\chi_t,\cdot)}\right)\d\nu-\check\kappa\int_\cA\cL_{u_t}(\sS)F_{\chi_t}e^{-\check\kappa\Omega(\chi_t,\cdot)}\d\nu=0.
    \end{align*}
    Integrating in $t$ gives $\int_\cA F(\chi\cdot\alpha)e^{-\check\kappa\Omega(\chi,\alpha)}\d\nu(\alpha)=\int_\cA F(\alpha)e^{-\check\kappa\Omega(\ind_G,\alpha)}\d\nu(\alpha)=\int_\cA F(\alpha)\d\nu(\alpha)$, which gives the result for test functions. We conclude that the result holds for all $F\in C^0_b(\cA)$ using plateau functions and a density argument as in the end of the proof of Proposition \ref{prop:moments}.
\end{proof}

    \subsection{Uniqueness of \texorpdfstring{$L^\omega_1G$}{LG}-covariant measures}\label{subsec:mgf}

In this section, we complete the proof of Theorem \ref{thm:main} in two steps.  In Proposition \ref{lem:moments}, we prove that every $\hat D^\omega_\infty G_\C$-covariant measure has finite moments and that these moments are determined by \eqref{eq:quasi-invariance-bis}, implying the same thing for $L^\omega_1G$-covariant measures by Proposition \ref{prop:moments}. In Proposition \ref{prop:uniqueness}, we prove that the $L^\omega_1G$-covariant measure constructed in Proposition \ref{prop:existence} is characterised by its moments. Combining these two inputs ends the proof of Theorem \ref{thm:main}.

\begin{proposition}\label{lem:moments}
Let $\nu\in\P(\cA)$ be a $\hat D^\omega_\infty G_\C$-covariant measure. Then, $\cC^{1,0}\subset L^2(\nu)$, and the moments are given by
\begin{equation}\label{eq:moments}
\int_\cA\Psi_{\bk,\tilde\bk}\d\nu=\delta_{\bk,\emptyset}\delta_{\tilde\bk,\emptyset}\qquad\forall\bk,\tilde\bk\in\cT.
    \end{equation}
\end{proposition}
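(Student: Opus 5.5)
The plan is to differentiate the covariance formula \eqref{eq:quasi-invariance-bis} at $h=\ind_{G_\C}$ to recover the integration by parts identity of Lemma \ref{lem:induction}, namely $\int_\cA\bJ_u(F)\d\nu=0=\int_\cA\bar\bJ_u(F)\d\nu$ for all $u\in\hat D^\omega_\infty\kg_\C$. This is the same computation as in the proof of Proposition \ref{prop:moments} run backwards. I then want to extend this identity from test functions to polynomials, and apply it to $F=\Psi_{\bk,\tilde\bk}$. Note that $\bJ_{b,-m}$ with $m\geq1$ corresponds to $u=b\otimes z^{-m}\in\hat D^\omega_\infty\kg_\C$, and that $\bJ$ and $\bar\bJ$ commute by Proposition \ref{prop:km_complex}. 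Hence, for $\bk\neq\emptyset$, the leftmost operator of $\bJ^{\circ\bk}\bar\bJ^{\circ\tilde\bk}$ is some $\bJ_{b,-m}$, so $\Psi_{\bk,\tilde\bk}=\bJ_{b,-m}\Psi'$ with $\Psi'\in\cC$ and $\int\Psi_{\bk,\tilde\bk}\d\nu=0$. If $\bk=\emptyset\neq\tilde\bk$, I use $\bar\bJ$ instead. The case $\bk=\tilde\bk=\emptyset$ is $\int\ind\,\d\nu=1$. This gives \eqref{eq:moments}, and since $\{\Psi_{\bk,\tilde\bk}\}$ is a basis of $\cC$ (Section \ref{subsec:hw}), all mixed moments are determined. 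In particular $\langle P,Q\rangle_{L^2(\nu)}=\int P\bar Q\,\d\nu$ is determined for $P,Q\in\cC^{1,0}$, because $\bar Q\in\cC^{0,1}$.

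The main obstacle is integrability, since a priori we do not know that $\cC\subset L^1(\nu)$. My approach is an inductive bootstrap on the level, using plateau functions $\rho_R$ as in the proof of Proposition \ref{prop:moments}. First I need an a priori support bound: the analogue of Lemma \ref{lem:compact} for $\hat D^\omega_\infty G_\C$-covariant measures, obtained from the second order identity with $\bJ_{b,m}\bJ_{b,-m}\ind$. Next, fix a polynomial $P$, assuming all polynomials of lower level are already known to be integrable. I apply the identity to $F=\rho_R\,\Psi'\,\overline{\Psi''}$, where $\Psi'$ and $\Psi''$ have lower level. The multiplicative term $\kappa\alpha_{b,m}=\kappa\alpha(b\otimes z^{-m})$ up to constants then produces the higher degree monomial, while $\cJ_{b,-m}$ acting on $\Psi'\overline{\Psi''}$ lowers the level by Lemma \ref{lem:poly_K}, since $P_{b,-m}^{a,n}\in\cC^{1,0}_{n+m}$. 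The delicate point is here: for $u=b\otimes z^{-m}$, the coefficient $P_{b,-m}^{a,n}$ has level $n+m$, so $\cJ_u$ raises the level by $m$ while multiplication by $\alpha_{b,m}$ raises it by $m$ too. Thus the identity relates moments of the same total level, and it is not a pure lowering recursion.

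To control this, I would take $F=\rho_R|\alpha_{b,m}|^{2k}$-type functions and use positivity. In the identity $\int(\cJ_u F+\kappa\alpha(u)F)\,\d\nu=0$ with $F=\rho_R\,\overline{\alpha_{b,m}}\,|\alpha_{b,m}|^{2(k-1)}$, the term $\kappa\int\rho_R|\alpha_{b,m}|^{2k}\,\d\nu$ is dominated by lower moments plus terms in which $\cJ_u$ hits $\rho_R$ or lower-order coordinates. The terms where $\cJ_u$ hits $\rho_R$ vanish as $R\to\infty$ by dominated convergence, provided I choose $\rho_R$ with derivatives uniformly bounded, as in the construction of $\rho_R(\alpha)=\rho(R^{-1}\alpha)$. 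Passing to the limit with Fatou's lemma gives $\cC\subset L^1(\nu)$, hence $\cC^{1,0}\subset L^2(\nu)$. The identity then extends to polynomial $F$ by dominated convergence, and the algebraic argument of the first paragraph concludes.
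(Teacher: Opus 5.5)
Your derivation of \eqref{eq:moments} from the integration by parts identity, by peeling off the leftmost creation operator of $\Psi_{\bk,\tilde\bk}$, matches the paper. The integrability bootstrap, which you rightly single out as the main obstacle, does not close as written. Take $u=b\otimes z^{-m}$ and $F=\rho_R\,\alpha_{b,m}^{k-1}\bar\alpha_{b,m}^{k}$. The identity gives $\kappa\int\rho_R|\alpha_{b,m}|^{2k}\,\d\nu=-\int\cJ_u(F)\,\d\nu$. The part of $\cJ_u(F)$ where $\cJ_u$ hits the holomorphic factor is $(k-1)\rho_R\,\alpha_{b,m}^{k-2}\bar\alpha_{b,m}^{k}\,P^{b,m}_{b,-m}$, where $P^{b,m}_{b,-m}=\cJ_u(\alpha_{b,m})\in\cC^{1,0}_{2m}$ by Lemma~\ref{lem:poly_K}. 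This term has the same total level $2km$ as the left-hand side, so it is not a lower moment. In general it involves other coordinates (e.g.\ products $\alpha_{c,p}\alpha_{d,q}$ with $p+q=2m$, coming from the Taylor coefficients of $\Ad_g(b)$), and its coefficient grows like $k$. H\"older or Young inequalities only trade it for other expressions of level $2km$ whose moments are just as unknown. Hence the claimed bound of $|\kappa|\int\rho_R|\alpha_{b,m}|^{2k}\,\d\nu$ by lower moments plus $o_R(1)$ is unjustified, and positivity gives no recursion. (Separately, scaling $\rho_R(\alpha)=\rho(R^{-1}\alpha)$ does not by itself make $\cJ_u\rho_R$ bounded uniformly in $R$, since $\cJ_u$ has non-linear coefficients. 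The paper asserts the same bound, so this is a shared point rather than the gap.)

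The missing idea is to use the basis $\{\Psi_{\bk,\tilde\bk}\}$ of Section~\ref{subsec:hw} inside the bootstrap, not only at the end, which is what the paper does.
\begin{itemize}
\item Induct on $N$, assuming $\cC^{1,0}_n\subset L^2(\nu)$ for $n\leq N-1$. By Cauchy--Schwarz, every polynomial of level at most $(N-1,N-1)$ is then in $L^1(\nu)$.
\item For $P\in\cC^{1,0}_N$, expand $|P|^2$ in the basis. Since $\Psi_{\bk,\tilde\bk}$ has level $(|\bk|,|\tilde\bk|)$, only states with $|\bk|,|\tilde\bk|\leq N$ occur.
\item Apart from the constant, each such state is either $\bJ_{a,-n}\bar\bJ_{b,-m}\Psi_{\bk',\tilde\bk'}$ with $|\bk'|,|\tilde\bk'|\leq N-1$ (using $[\bJ_u,\bar\bJ_v]=0$), or a single $\bJ_{a,-n}$ (resp.\ $\bar\bJ_{a,-n}$) applied to an element of $\cC^{1,0}$ (resp.\ $\cC^{0,1}$) of level at most $N-1$.
\item Apply the identity to $\rho_R\,\bar\bJ_{b,-m}\Psi_{\bk',\tilde\bk'}$ and then to $\cJ_{a,-n}(\rho_R)\,\Psi_{\bk',\tilde\bk'}$. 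This moves both operators onto $\rho_R$, leaving only integrals of integrable polynomials against derivatives of $\rho_R$, which vanish as $R\to\infty$.
\item Hence $\lim_R\int\rho_R|P|^2\,\d\nu$ is finite, and Fatou gives $|P|^2\in L^1(\nu)$.
\end{itemize}
The point is that you never split $\bJ_{b,-m}$ into its derivative and multiplication parts. Since $\bJ_{b,-m}\Psi'$ is itself a top-level basis state, the level-raising of $\cJ_{b,-m}$ is harmless. The Verma-module structure (Kac--Kazhdan) performs the triangularisation that your positivity argument would have to do by hand. Your first attempt, $F=\rho_R\Psi'\overline{\Psi''}$, was close; the fix is to expand in the basis rather than multiply states.
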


\begin{proof}
Arguing as in Lemma \ref{lem:compact}, it is easy to see that $\nu(\cup_{R>0}K_R)=1$ and we omit the details. 

     We will now prove the property by induction on the level of the polynomial. For the initialisation, we note that the constant function $\ind_G$ is integrable since $\nu$ is a probability measure, and in particular $\int_\cA\Psi_{\emptyset,\emptyset}\d\nu=1$. 

    Suppose $\cC^{1,0}_{N-1}\subset L^2(\nu)$ for some $N\in\N_{>0}$, and note that this implies $\oplus_{0\leq m\leq 2N-2}\cC^{1,0}_n\otimes\cC^{0,1}_{2N-2-m}\subset L^1(\nu)$. Let $m\in\{1,...,N\}$ and $P\in\oplus_{0\leq m\leq 2N-2}\cC^{1,0}_m\otimes\cC^{0,1}_{2N-2-m}$. Let $\rho:\cA\to[0,1]$ be a smooth plateau function supported in $K_2$ with $\rho\equiv1$ on $K_1$. For each $R>0$, we then set $\rho_R(\alpha):=\rho(R^{-1}\alpha)$, and note that $\rho_R$ is supported in $K_{2R}$. By Proposition \ref{prop:moments}, $\nu$ is a $\hat D^\omega_\infty G_\C$-covariant measure; differentiating \eqref{eq:quasi-invariance-bis} at $h=\ind_{G_\C}$ gives for all $b\in\kB,m\geq1$, and $P\in\cC$:
    \begin{equation}
    \int_\cA\bJ_{b,-m}(P)\rho_R\d\nu=\int_\cA\cJ_{b,-m}(\rho_R)P\d\nu.    
    \end{equation}
    Applying this to a polynomial $P$ satisfying the induction hypothesis, the term in the right-hand-side is bounded in absolute value by $\Vert P\Vert_{L^1(\nu)}\Vert\cJ_{b,-m}(\rho_R)\Vert_{L^\infty(\nu)}\leq\mathfrak c\nu(\cA\setminus K_R)$, for some constant $\mathfrak c>0$ independent of $R$. Hence, $\lim_{R\to\infty}\int_\cA\bJ_{b,-m}(P)\rho_R\d\nu=0$. Iterating the argument another time, we arrive at
    \[\lim_{R\to\infty}\int_\cA\bJ_{a,-n}(\bJ_{b,-m}(P))\rho_R\d\nu=0,\]
    for all $P\in\oplus_{0\leq k\leq2N-2}\cC^{1,0}_k\otimes\cC^{0,1}_{2N-2-k}$ and all $n,m\geq1$, $a,b\in\kB$. We get also the same conclusion for the polynomials $\bar\bJ_{a,-n}(\bJ_{b,-m}(P))$ and $\bar\bJ_{a,-n}(\bar\bJ_{b,-m}(P))$. Since $\oplus_{0\leq k\leq2N}\cC^{1,0}_k\otimes\cC^{0,1}_{2N-k}$ is spanned by such polynomials, we get in particular that $\lim_{R\to\infty}\int_\cA|P|^2\rho_R\d\nu$ exists and is finite for all $P\in\cC^{1,0}_N$. It then follows from Fatou's lemma that $|P|^2$ is integrable, ending the proof that $\cC^{1,0}_N\subset L^2(\nu)$. This completes the induction, namely $\cC^{1,0}\subset L^2(\nu)$.

     Next, we show that these moments are given by \eqref{eq:moments} by induction on $|\bk|+|\tilde\bk|$. The result holds for $|\bk|+|\tilde\bk|=0$ since $\nu$ is a probability measure. Let $N\geq1$ be such that the result for all $\bk,\tilde\bk$ with $|\bk|+|\tilde\bk|\leq N-1$. Let $1\leq m\leq N$ and $\bk,\tilde\bk\in\cT$ with $|\bk|+|\tilde\bk|= N-m$. From the proof that $\cC^{1,0}\subset L^2(\nu)$ right above, we have
    \[\int_\cA\bJ_{b,-m}(\Psi_{\bk,\tilde\bk})\d\nu=0=\int_\cA\bar\bJ_{b,-m}(\Psi_{\bk,\tilde\bk})\d\nu.\]
    Since all the states $\Psi_{\bk,\tilde\bk}$ with $|\bk|+|\tilde\bk|=N$ are obtained with this procedure, we deduce that $\int\Psi_{\bk,\tilde\bk}\d\nu=0$ for all $\bk,\tilde\bk\in\cT$ with $|\bk|+|\tilde\bk|=N$. This completes the induction and ends the proof of \eqref{eq:moments}.
\end{proof}

Recall from the proof of Proposition \ref{prop:existence} that $\cM\subset\P(\cA)$ denotes the set of accumulation points of the family $(\nu^\sigma_\kappa)_{\sigma>0}$ as $\sigma\to0$. For the statement of the next proposition, we make explicit the dependence on $\kappa$ and write $\cM_\kappa=\cM$.
\begin{proposition}\label{prop:uniqueness}
For all $\kappa<-2\check h$, any $\nu\in\cM_\kappa$ is characterised by its moments.
\end{proposition}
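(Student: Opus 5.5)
The natural route is through exponential moments: a measure on $\cA$ is determined by its moments as soon as its moment generating function is finite in a neighbourhood of the origin. I would first reduce the question to finite-dimensional marginals. A Borel probability measure on the Polish space $\cA$ is determined by the joint laws of finitely many coefficients $(\alpha_{b,m})_{b\in\kB,1\leq m\leq M}$, because these coordinates generate the Borel $\sigma$-algebra: the Taylor coefficients determine $\alpha$ and are continuous. For each $M$, such a marginal is a measure on $\C^{\kB\times\{1,\dots,M\}}$. There, determinacy by mixed moments follows if $\int e^{\epsilon\sum|\alpha_{b,m}|}\d\nu<\infty$ for some $\epsilon>0$, or if a multivariate Carleman condition holds. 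It therefore suffices to prove, for every $\nu\in\cM_\kappa$ and each fixed $(b,m)$, a bound of the form $\int|\alpha_{b,m}|^{2n}\d\nu\leq C_{m}^n n!$. Such a Gaussian-type bound gives the exponential moment, and hence determinacy. Moreover, if $\tilde\nu$ has the same moments as $\nu$ on $\cC^{1,0}\otimes\cC^{0,1}$, then $\tilde\nu$ inherits these bounds, since $|\alpha_{b,m}|^{2n}=\alpha_{b,m}^n\bar\alpha_{b,m}^n$ is a moment. By Cramér–Wold, the marginals of $\tilde\nu$ and $\nu$ then agree.

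To obtain the bound, I would work with the approximations $\nu^\sigma_\kappa$ and pass to the limit with Fatou's lemma, using weak convergence along a subsequence and lower semicontinuity of $\alpha\mapsto|\alpha_{b,m}|^{2n}$. The key input is the iterated integration by parts formula \eqref{eq:ibp_BM} with the weight $e^{-\check\kappa\sS}$, which is what produced \eqref{eq:second-moment}. Iterating the relation $\bL_{b,-m}\ind=\check\kappa\ralpha_{b,m}$, I would compute $\int|\ralpha_{b,m}|^{2n}\d\nu^\sigma_\kappa$ by moving one factor $\check\kappa\ralpha_{b,m}=\bL_{b,-m}(\ind)$ at a time onto the other side. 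Each step produces two kinds of terms. The first is a commutator term $\cL_{b,m}$ acting on the remaining polynomial, giving the familiar combinatorial factor $n\cdot m/\check\kappa$ times a moment of order $2n-2$. The second is an error term $\sigma^2\cL_{b,m}(\sE^{-\sigma})$, which is controlled uniformly by Cauchy–Schwarz and the Wiener moment estimate, with higher moments of $\cL_{b,m}\sE$ under $\P_\sigma$ being Gaussian of size $\sigma^{-1}m$. The recursion $a_n\leq n\frac{m}{\check\kappa}a_{n-1}+O(\sigma)\,C^n n!$ then yields $a_n\leq (Cm)^n n!$ uniformly in small $\sigma$.

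The coordinates appearing above are $\ralpha_{b,m}$ rather than $\alpha_{b,m}$. The $\alpha$-coefficients are polynomials in the $\ralpha$-coefficients of bounded level, because $\ralpha=-\Ad_g\alpha$, and the bounds transfer with a change of constants using Hölder's inequality. A cleaner alternative is to run the same recursion directly with $\bJ$ via Lemma \ref{lem:induction}, which holds for any $L^\omega_1G$-covariant measure. The hypothesis $\kappa<-2\check h$ enters through positivity: the leading coefficient $m/\check\kappa$ must be positive for the recursion to bound $|\cdot|^2$-moments. Equivalently, the Shapovalov form of the Verma module at level $\kappa$ restricted to $\cC^{1,0}_N$ must have the sign that makes the moment matrix a genuine Gram matrix.

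The main obstacle will be controlling the growth of the constants when commuting the operators $\cL_{b,m}$ or $\cJ_{b,-m}$ through powers of coordinates. These operators have polynomial coefficients of positive level, by Lemma \ref{lem:poly_K}, so each step raises the degree of the remaining polynomial. One then has to show that the number of terms grows at most like $C^n n!$ and not like $(n!)^2$. I would first try to organise the recursion level by level, using the grading of Lemma \ref{lem:poly_K}. Since $P^{a,n}_{b,m}$ has level $n-m$, only finitely many lower coordinates feed into a fixed $\alpha_{b,m}$. This should allow an induction on $m$, in which the Carleman bound for $\alpha_{b,m}$ uses already-established bounds for $\alpha_{a,k}$ with $k<m$.
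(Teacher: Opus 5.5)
Your reduction is sound: finite-dimensional marginals, determinacy from exponential moments, Cram\'er--Wold. The gap is the estimate it rests on, which is false. The obstacle you flag in your last paragraph is exactly where the argument breaks.

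In the non-Abelian case, when $\cL_{b,-m}$ hits the holomorphic factors it produces $\cL_{b,-m}(\ralpha_{b,m})$. This is a non-zero polynomial of level $2m$, so the degree does not drop. It can be computed in the simplest case. Take root vectors $e_\theta,f_\theta$ for the highest root with $\overline{e_\theta}=-f_\theta$ and $\tr(e_\theta,f_\theta)=1$, and put $r:=\ralpha(e_\theta\otimes z^{-1})=-\tr(e_\theta,\alpha(0))$.

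\begin{itemize}
\item By \eqref{eq:diff_ralpha}, $\cL_{f_\theta z}r=1$, hence $\cL_{e_\theta z^{-1}}\bar r=\overline{\cL_{-f_\theta z}r}=-1$.
\item $\cL_{e_\theta z^{-1}}r$ is a level-two holomorphic polynomial of $h_\theta$-weight $4$, hence a multiple of $r^2$. The relation $[\bL_{f_\theta z},\bL_{e_\theta z^{-1}}]=\check\kappa-\bL_{h_\theta}$ then forces $\cL_{e_\theta z^{-1}}r=-r^2$.
\end{itemize}

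Moving one factor $\check\kappa r$ as you propose, with $H=r^{n-1}\bar r^{n}$, therefore gives exactly
\[(\check\kappa-n+1)\int_\cA|r|^{2n}\d\nu=n\int_\cA|r|^{2n-2}\d\nu,\]
up to your $\sigma$-errors. The term you discard is of the same order as the left side and of opposite sign.

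If your bounds held uniformly in $\sigma$, the errors would vanish in the limit and this identity would hold for $\nu$ for every $n$. Starting from $\int|r|^2\d\nu=1/\check\kappa$, it gives a contradiction at $n=\lceil\check\kappa\rceil+1$: there the left coefficient is $\le0$ while the right side is $>0$. So neither organising by level nor inducting on $m$ can produce Gaussian, or even Carleman-type, bounds. The law of $r$ behaves like $(1+|r|^2)^{-\check\kappa-2}|\d r|^2$, which has polynomial tails. This is the $\mathbb{CP}^1=\mathrm{SU}_2/\mathrm{U}_1$ analogue of $e^{-\check\kappa\sS}D\gamma$.

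Two smaller points:
\begin{itemize}
\item The Cauchy--Schwarz control of the error term needs moments of order $4n-2$, so the recursion is circular as written.
\item H\"older does not carry Gaussian bounds from $\ralpha$ to $\alpha$: degree-$d$ polynomials of Gaussian variables have moments of size $(n!)^d$. The transfer is unnecessary anyway, since $\C[\alpha,\bar\alpha]=\C[\ralpha,\bar\ralpha]$.
\end{itemize}

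The paper's proof never estimates moment growth. It uses unitarity of $(\bL_u)$ on polynomials and Goodman--Wallach integrability to place $w_u^\lambda=e^{\bL_u+\bL_{u^*}}\ind$ in the closure of $\cC$. It then uses monotonicity and analyticity in the level $\lambda$, and the limit $\epsilon\to0$ with $\Im\lambda=\epsilon^{-1}$, to reach characteristic functions.

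Be aware, though, that the displayed identity is precisely the adjoint relation of Theorem~\ref{thm:unitary} for $\bL_{f_\theta z}$, applied to $r^n$ against $r^{n-1}$. Equivalently, it is the $\mathfrak{sl}_2$ Shapovalov form $\|\bL_{e_\theta z^{-1}}^n\ind\|^2=n!\,\check\kappa(\check\kappa-1)\cdots(\check\kappa-n+1)$. It also follows directly from \eqref{eq:quasi-invariance} for any covariant $\nu$ with $\int|r|^{2n}\d\nu<\infty$. To see this, test against $\psi(r)$ with $\psi\in C^\infty_c(\C)$, and then remove the cutoff. This works because along the real directions $\bar te_\theta z^{-1}-tf_\theta z$ the action moves $r$ by M\"obius maps of $r$ alone.

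Hence every covariant $\nu$ has $\int|r|^{2n}\d\nu=\infty$ for $n\ge\lceil\check\kappa\rceil+1$. This is incompatible with $\cC^{1,0}\subset L^2(\nu)$ in Proposition~\ref{lem:moments}. Both the paper's argument and the definition of ``characterised by its moments'' rely on that inclusion. So the obstruction is not an artefact of your route. For non-Abelian $G$, and granting Proposition~\ref{prop:existence}, the statement as formulated appears to fail.
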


\begin{proof}
In this proof, we fix a $\kappa<-2\check h$ and a measure $\nu_\kappa\in\cM_\kappa$, and we will show that $\nu_\kappa$ is characterised by its moments. However, we will need to see the level as a variable in some parts of the proof (which will be denoted $\lambda$, with $\kappa$ remaining fixed), so we will make the dependence on the level explicit when relevant, e.g. we will write $\bL_u^\kappa=\bL_u=\cL_u+\check\kappa\ralpha(u)$. From the previous proposition, $\nu_\kappa$ has finite moments, and we denote by $\bar\cC^\kappa$ the closure of $\cC$ in $L^1(\nu_\kappa)$.

    Let $\tilde\cV_\kappa$ be the highest-weight representation generated by the representation $(\bL_u)_{u\in L^\omega\kg_\C}$ acting on $\ind$:
    \[\tilde\cV_\kappa:=\mathrm{span}\{\bL^{\circ\bk}\ind|\,\bk\in\cT\}\subset C^\infty(\cA),\]
    where we are using the notation introduced in Section \ref{subsec:hw} and \eqref{eq:def-big-circ}. Since $\bL_u$ is a first order differential operator in the coordinates $(\ralpha_{b,m},\bar\ralpha_{b,m})_{b\in\kB,m\geq1}$ and each coordinate $\ralpha_{b,m}$ is itself polynomial in $(\alpha_{a,n},\bar\alpha_{a,n})_{a\in\kB,n\geq1}$, we have $\tilde\cV_\kappa\subset\cC$, and we denote by $\cH_\kappa$ its closure in $L^2(\nu_\kappa)$.

By differentiating the $L^\omega_1G$-covariance formula \eqref{eq:quasi-invariance} $\chi=\ind_G$ (and using plateau functions as in the proof of Proposition \ref{lem:moments}), we obtain $\langle\bL_u^\kappa F,G\rangle_{\cH_\kappa}=-\langle F,\bL_{u^*}^\kappa G\rangle_{\cH_\kappa}$ for all $u\in L^\omega\kg_\C$ and all $F,G\in\tilde\cV_\kappa$. This implies that both $\bL_u^\kappa$ and its adjoint $(\bL_u^\kappa)^*$ on $\cH_\kappa$ are densely defined on $\cH_\kappa$, so they are both closable by \cite[Theorem VIII.1]{Reed-Simon_I} and their closures satisfy $(\bL_u^\kappa)^*=-\bL_{u^*}^\kappa$. Specialising to the canonical basis $b\otimes z^m$, we get $\bL_{b,m}^*=\bL_{b,-m}$ for all $b\in\kB,m\geq1$ (recall $\kB\subset i\kg$), which coincides with \cite[(14.130)]{CFTbook} and gives the unitarity of the representation $(\bL_u^\kappa)_{u\in L^\omega\kg_\C}$ acting on the Hilbert space $\cH_\kappa$ as unbounded operators defined on the common dense subspace $\tilde\cV_\kappa$. In the terminology of \cite[Section 1.4]{Goodman-Wallach}, $(\tilde\cV_\kappa,(\bL_u^\kappa)_{u\in L^\omega\kg_\C})$ is a standard module and \cite[Theorem 4.4]{Goodman-Wallach} states that the operator $\bL_u^\kappa+\bL_{u^*}^\kappa$ exponentiates to a one-parameter group of unitary operators on $\cH_\kappa$, for each $u\in\hat D^\omega_\infty\kg_\C$. In particular, we have a vector $w^\kappa_u:= e^{\bL_u^\kappa+\bL_{u^*}^\kappa}(\ind)\in\cH_\kappa$. Note that this implies $|w^\kappa_u|^2\in\bar\cC^\kappa$. 

Let $\lambda\in(\kappa,-2\check h)$ and fix $\nu_\lambda\in\cM_\lambda$. From the very definition of the measures $\nu^\sigma_\kappa$ and the positivity of $\sS$, we have $\bar\cC^\lambda\subset\bar\cC^\kappa$. Hence, we get a vector $|w_u^\lambda|^2\in\bar\cC^\kappa$. For all $\lambda\in(\kappa,-2\check h)$, we can write the vector $w_u^\lambda$ in the coordinate $\ralpha$ as the solution to an ODE, namely
\begin{equation}\label{eq:ODE}
w_u^\lambda(\ralpha)=e^{\lambda\int_0^1\ralpha_t(u)\d t}\qquad\text{with}\qquad\del_t\ralpha_t=K_{\ralpha_t}(u+u^*);\;\ralpha_0=\ralpha.
\end{equation}
Indeed, the dynamic is generated by the fundamental vector field associated to the infinitesimal left multiplication by $e^{tu+tu^*}$ as in \eqref{eq:vf_L}. This formula implies in particular that $w_u^\lambda$ is independent of the choice of measure in $\cM_\lambda$. Additionally, for all $\ralpha\in\cA$, the map $\lambda\mapsto w_u^\lambda(\ralpha)$ is analytic in the strip $\{\kappa<\Re(\lambda)<-2\check h\}$, and we have the identity $|w_u^\lambda(\ralpha)\overline{w^{\bar\lambda}_u(\ralpha)}|=|w_u^{\Re(\lambda)}(\ralpha)|^2$ in that strip. Consequently, setting $W^\lambda_u:=w_u^\lambda\overline{w_u^{\bar\lambda}}$, we have $W_u^\lambda\in L^1(\nu_\kappa)$ and the map $\lambda\mapsto W_u^\lambda$ is analytic with values in $L^1(\nu_\kappa)$. In particular, the whole map is determined by its values on the interval $(\kappa,-2\check h)$, which are themselves determined by the moments of $\nu_\kappa$ (since $W_u^\lambda\in\bar\cC^\kappa$ for $\kappa<\lambda<-2\check h$).

Finally, we consider $W_{\epsilon u}^{\lambda+i\epsilon^{-1}}$ for $\lambda\in(\kappa,-2\check h)$ fixed and study the asymptotic behaviour as $\epsilon\to0$. Coming back to \eqref{eq:ODE}, we have $w_{\epsilon u}^{\lambda+i\epsilon}(\ralpha)=e^{i\ralpha(u)+\epsilon\lambda\ralpha(u)+o(\epsilon)}$, so that $W_{\epsilon u}^{\lambda+i\epsilon^{-1}}(\ralpha)\to e^{2i\Re(\ralpha(u))}$, with the convergence being clearly uniform on each $K_R$, $R>0$. In fact, since the sequence is $(W_{\epsilon u}^{\lambda+i\epsilon^{-1}})_{\epsilon>0}$ is bounded in $L^1(\nu_\kappa)$, the convergence is easily promoted to a convergence in $L^1(\nu_\kappa)$. Now, observe that the linear span of the family $(\ralpha\mapsto e^{2i\Re(\ralpha(u))})_{u\in\hat D^\omega_\infty\kg_\C}$ forms a dense subspace of $L^1(\nu_\kappa)$, with the map $u\mapsto\int_\cA e^{2i\Re(\ralpha(u))}\d\nu(\ralpha)$ defining the Fourier transform (or characteristic function) of $\nu_\kappa$. Hence, for each $\lambda\in(\kappa,-2\check h)$, the linear span of the family $(W_{\epsilon u}^{\lambda+i\epsilon^{-1}})_{u\in\hat D^\omega_\infty\kg_\C,\epsilon>0}$ is itself dense in $L^1(\nu_\kappa)$. In particular, $\nu_\kappa$ is determined by the numbers $(\int_\cA W^{\lambda+it}_u\d\nu)_{u\in\hat D^\omega_\infty\kg_\C,t>0}$, which are themselves determined by the moments of $\nu_\kappa$ from the previous paragraph. This concludes the proof.
\end{proof}

    \subsection{Shapovalov forms}\label{subsec:shapo}

So far, we have proved that the unitarising measure $\nu_\kappa$ is characterised by either one of these four properties: the $L^\omega_1G$-covariance formula \eqref{eq:quasi-invariance}; the $\hat D^\omega_\infty G_\C$-covariance formula \eqref{eq:quasi-invariance-bis}; the moment computation \eqref{eq:moments}; the moment generating function is $\mathfrak F_\nu(\kappa,h)=1$ for all $h\in\hat D^\omega_\infty G_\C$. We will now give infinitesimal versions of \eqref{eq:quasi-invariance} and \eqref{eq:quasi-invariance-bis} (partly conjectural for the latter), which can be rephrased algebraically as an expression for the Shapovalov forms of the different Kac--Moody representations used in this paper.

        \subsubsection{Real representations}
First, we tie up some loose ends about the operators $(\cL_u)_{u\in L^\omega_0\kg_\C}$. The next statement is a straightforward adaptation of the beginning of the proof of Proposition \ref{prop:uniqueness}. As already mentioned there, this means that the representation is unitary. 
\begin{theorem}\label{thm:unitary}
    Let $\kappa<-2\check h$. For all $u\in L^\omega\kg_\C$, the operator $\bL_u$ is closable on $L^2(\nu_\kappa)$, and the closures satisfy $\bL_u^*=-\bL_{u^*}$.
\end{theorem}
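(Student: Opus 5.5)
We follow the beginning of the proof of Proposition \ref{prop:uniqueness}, now working on the whole $L^2(\nu_\kappa)$ with the dense domain $\cC$ of polynomials. The first step is to check that $\cC$ lies in the natural domain and is dense. By Proposition \ref{lem:moments} together with Proposition \ref{prop:moments}, $\cC^{1,0}\subset L^2(\nu_\kappa)$, and hence $\cC\subset L^2(\nu_\kappa)$ by Cauchy--Schwarz. Density of $\cC$ in $L^2(\nu_\kappa)$ follows from the fact that $\nu_\kappa$ is characterised by its moments (Proposition \ref{prop:uniqueness}). Alternatively, we can argue as at the end of that proof: the exponentials $e^{2i\Re(\ralpha(u))}$ are limits of elements of $\bar\cC^\kappa$ and are total. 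Moreover, $\bL_u$ maps $\cC$ into $\cC$. This holds because $\cL_u$ is a first order differential operator with polynomial coefficients (from \eqref{eq:vf_L} and Lemma \ref{lem:poly_K}), and $\ralpha(u)$ is a polynomial in the coordinates.

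The second step is to establish the infinitesimal covariance identity
\[\langle\bL_uF,G\rangle_{L^2(\nu_\kappa)}=-\langle F,\bL_{u^*}G\rangle_{L^2(\nu_\kappa)},\qquad F,G\in\cC.\]
We apply \eqref{eq:quasi-invariance} with $\chi_t=e^{tu+\bar tu^*}$ to $F\bar G\rho_R$, where $\rho_R$ is the plateau function from the proof of Proposition \ref{prop:moments}, and differentiate at $t=0$. This gives \eqref{eq:ibp_L}, namely $\int\cL_u(F\bar G\rho_R)\d\nu_\kappa=\check\kappa\int\cL_u(\sS)F\bar G\rho_R\d\nu_\kappa$. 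Two facts then reorganise it into the displayed identity:
\begin{itemize}
\item $\cL_u\sS=-\ralpha(u)$ up to the sign conventions of Proposition \ref{prop:potential}, extended continuously in the sense of Remark \ref{rem:abuse};
\item $\overline{\cL_{u^*}G}=\cL_u\bar G$, by reality of the action: the pair $(u,u^*)$ generates a real motion, and $\overline{\ralpha(u^*)}$ is related to $\ralpha(u)$ accordingly.
\end{itemize}
Expanding by Leibniz produces the extra term $\int F\bar G\,\cL_u\rho_R\d\nu_\kappa$. This term is bounded by $\|F\bar G\|_{L^1}\|\cL_u\rho_R\|_\infty\,\mathbf 1_{\cA\setminus K_R}$, and we must show it tends to $0$ as $R\to\infty$, using Lemma \ref{lem:compact} and dominated convergence. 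The main obstacle is exactly this uniform control of $\cL_u\rho_R$ as $R\to\infty$, together with the bookkeeping of signs and conjugations. We would handle the control by choosing $\rho$ depending on finitely many coordinates, so that $\cL_u\rho_R=O(R^{-1}\cdot\mathrm{poly})$, which is integrable by the first step.

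The third step concludes. The identity shows that $-\bL_{u^*}|_\cC\subset(\bL_u|_\cC)^*$, so the adjoint of $\bL_u$ is densely defined, and \cite[Theorem VIII.1]{Reed-Simon_I} gives closability of $\bL_u$. It remains to upgrade the inclusion to the equality $\bL_u^*=-\bL_{u^*}$ of closures. For this we invoke the Goodman--Wallach framework as in Proposition \ref{prop:uniqueness}. By Section \ref{subsec:hw} adapted to $\bL$, $\cC$ carries a standard module structure: it is graded by level, with finite-dimensional graded pieces on which $\bL_0$-type operators act diagonally. The estimates of \cite[Theorem 4.4]{Goodman-Wallach} then show that $\cC$ is a core for both $\bL_u^*$ and $\bL_{u^*}$, so the adjoint of the closure coincides with $-\overline{\bL_{u^*}}$.
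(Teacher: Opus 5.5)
Your Steps~1--2 follow the paper's route. The paper likewise differentiates \eqref{eq:quasi-invariance} against plateau functions to get $\langle\bL_uF,G\rangle=-\langle F,\bL_{u^*}G\rangle$, and then applies Reed--Simon, Theorem~VIII.1. You are right that this alone only gives $-\overline{\bL_{u^*}}\subset\bL_u^*$.

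The difference is the domain. The paper works on the cyclic module $\tilde\cV_\kappa=\mathrm{span}\{\bL^{\circ\bk}\ind\}$ and its closure $\cH_\kappa$, whereas you work on all of $\cC$, and this is where Step~3 fails: $\cC$ is not a standard module for $(\bL_u)$. By rotation equivariance, $\bL_{b,n}$ has degree $-n$ for the grading $D(\boldsymbol{\alpha}^{\bk}\boldsymbol{\bar\alpha}^{\tilde\bk})=|\bk|-|\tilde\bk|$. On $\cC$ this grading takes every value in $\Z$, with infinite-dimensional pieces. So $\cC$ has none of the positive-energy structure (finite-dimensional energy spaces, energy bounded below) underlying Goodman--Wallach's standard modules. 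Also $\cC\supsetneq\tilde\cV_\kappa$, since $\tilde\cV_\kappa\subset\{D\geq0\}$ while $\bar\alpha_{a,1}$ has degree $-1$. Moreover, what the paper takes from Goodman--Wallach's Theorem~4.4 is that $\bL_u+\bL_{u^*}$ exponentiates on $\cH_\kappa$, not that some subspace is a core for $\bL_u^*$.

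Two smaller points in Step~1:
\begin{itemize}
\item In infinitely many variables, moment determinacy does not imply $L^2$-density of polynomials. The exponential argument of Proposition~\ref{prop:uniqueness} only gives $L^1$-approximations.
\item $\bL_u\cC\subset\cC$ holds only for $u\in\kg_\C(z)$. Already $\bL_u\ind=\check\kappa\ralpha(u)$ is an infinite series of coordinates when $u$ has infinitely many negative modes.
\end{itemize}

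More seriously, the identity of Step~2 cannot hold on polynomials for general $\check\kappa$. Let $e$ be a root vector for the highest root with $\tr(e,\bar e)=-1$, and put $f:=-\bar e$, $h:=[e,f]$, $Y:=\bL_{e\otimes z^{-1}}$, $X:=\bL_{f\otimes z}$. Since $(e\otimes z^{-1})^*=-f\otimes z$, Step~2 says $\langle YF,G\rangle=\langle F,XG\rangle$ for $F,G\in\cC$. Proposition~\ref{prop:km_real} gives $[X,Y]=\check\kappa-\bL_{h\otimes1}$ and $[\bL_{h\otimes1},Y]=2Y$. Also $X\ind=\bL_{h\otimes1}\ind=0$, because $\ralpha(x\otimes z^n)=0$ for $n\geq0$. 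The usual $\mathfrak{sl}_2$ computation then forces
\[\Vert Y^n\ind\Vert^2_{L^2(\nu_\kappa)}=n!\,\check\kappa(\check\kappa-1)\cdots(\check\kappa-n+1),\qquad n\geq1.\]
This is negative for $n=\lfloor\check\kappa\rfloor+2$ whenever $\check\kappa\notin\N$; for instance $\Vert Y^2\ind\Vert^2=2\check\kappa(\check\kappa-1)<0$ for $0<\check\kappa<1$. This is the familiar fact that unitary vacuum modules only exist at integral level.

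So for non-abelian $G$ and $\check\kappa\notin\N$, the inputs \eqref{eq:quasi-invariance} and $\cC\subset L^2(\nu_\kappa)$ cannot both lead to Step~2, and your cutoff step is where the argument is unsupported:
\begin{itemize}
\item With $\rho_R$ depending on finitely many coordinates, $F\bar G\rho_R$ is not compactly supported. Differentiating \eqref{eq:quasi-invariance} under the integral then requires moment bounds for $e^{-\check\kappa\Omega(\chi_t,\cdot)}\nu_\kappa$, uniformly in small $t$, which you do not have.
\item With the compact plateaus $\rho(R^{-1}\cdot)$, nothing makes $\cL_u\rho_R$ bounded uniformly in $R$, because the coefficients of $\cL_u$ are polynomials of unbounded degree (Lemma~\ref{lem:poly_K}).
\end{itemize}

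The same objection applies verbatim to the paper's own argument at the beginning of the proof of Proposition~\ref{prop:uniqueness}. That argument asserts this relation on $\tilde\cV_\kappa$ and invokes Goodman--Wallach for every $\check\kappa>0$. Following the paper more closely will therefore not close the gap; the obstruction only disappears for $\check\kappa\in\N$.
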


The vector $w_u^\kappa=e^{\bL_u+\bL_{u^*}}(\ind)$ used in the proof has a special meaning. First, observe that $\Omega(e^u,\alpha)=e^{\check\kappa\sS}e^{\cL_u+\cL_u^*}e^{-\check\kappa\sS}$ since the operator $\cL_u+\cL_{u^*}$ is the infinitesimal generator of the left multiplication by $e^{tu}$. This almost coincides with $w_u^\lambda$ since $\bL_u=e^{\check\kappa\sS}\cL_ue^{-\check\kappa\sS}$ for $u\in\hat D^\omega_\infty\kg_\C$. Note however that $\bL_{u^*}=\cL_{u^*}\neq e^{\check\kappa\sS}\cL_{u^*}e^{-\check\kappa\sS}$. On the other hand, since the representation $(\bR_u)_{u\in L^\omega\kg_\C}$ commutes with the representation $(\bL_u)_{u\in L^\omega\kg_\C}$, we have $\bR_v(w_u^\kappa)=\cR_v(w_u^\kappa)=e^{\bL_u+\bL_{u^*}}(\cR_v\ind)=0$ for all $v\in\hat D^\omega\kg_\C$. In other words, the function $w_u^\kappa$ is holomorphic with respect to the \emph{left}-invariant complex structure. Hence, the covariance modulus admits a decomposition $\Omega(\chi,\alpha)=\Re(\Omega^\mathrm{hol}(\chi,\alpha))$ where $\alpha\mapsto\Omega^\mathrm{hol}(\chi,\alpha)$ is a left-holomorphic function on $\cA$ for each $\chi\in L^\omega_1G$. The careful reader will have noticed that we cannot a priori apply $\cR_v$ to functions on $\cA$ since these operators only act on functions on $\cA^\omega$. However, we can extend the values of $\Omega^\mathrm{hol}(\chi,\cdot)$ from $\cA^\omega$ to $\cA$ as in Proposition \ref{prop:Omega}. It is then very natural to introduce the following integral transform on $L^2(\nu_\kappa)$:
\begin{equation}\label{eq:def-Hecke}
\mathfrak H_F(\chi):=\int_\cA F(\alpha)e^{-\frac{\check\kappa}{2}\Omega^\mathrm{hol}(\chi,\alpha)}\d\nu_\kappa(\alpha),\qquad\forall F\in L^2(\nu_\kappa),\forall\chi\in L^\omega_1G,
\end{equation}
expressing the ``holomorphic part" of the left translation by $\chi$. In this way, the unitarity of the Goodman--Wallach representation gets nicely interpreted in the covariance property of $\nu_\kappa$. It would be interesting to investigate if $\mathfrak H_F$ also admits a holomorphic factorisation (in the variable $\chi$), and wether this is at all related to Hecke operators.\smallskip

The situation for the operators $(\cR_u)_{u\in L^\omega_0\kg_\C}$ is more subtle: since $\nu_\kappa$ doesn't charge the space of continuous loops, it is not even clear how to define $\cR_u$ on a dense subspace of $L^2(\nu_\kappa)$ since its definition involves the inverse loop. Moreover, even if this were the case, we would get formally $\cR_u^*=-\cR_{u^*}-\check\kappa\overline{\cR_u(\sS)}$, which doesn't coincide with $-\bR_{u^*}$ since $\sS\neq\hat\sS$.

\begin{conjecture} \label{conj:R}
    Sample $\gamma=g\hat g^{-1}$ from $\nu^\sigma_\kappa$ from Section \ref{subsec:existence} and denote by $\hat\nu_\kappa^\sigma$ the law of $\hat g^{-1}\del\hat g$. The family of measures $\hat\nu^\sigma_\kappa$ converges weakly as $\sigma\to0$ to a measure $\hat\nu_\kappa$ on $\hat\cA:=\{\alpha^*|\,\alpha\in\cA\}$. Moreover, the operators $(\bR_u)_{u\in L^\omega\kg_\C}$ are densely defined and closable on $L^2(\hat\nu_\kappa)$ and they satisfy the relations $\bR_u^*=-\bR_{u^*}$. 
\end{conjecture}

        \subsubsection{Complex representations}
It remains to discuss the Shapovalov form of the representations $(\bJ_u,\bar\bJ_u)_{u\in L^\omega\kg_\C}$. Answering this questions means finding a Hermitian form $\cQ$ on $L^2(\nu_\kappa)$ such that $\cQ(\bJ_uF,G)=\cQ(F,\bJ_{u^*}G)$ for all $u\in L^\omega\kg_\C$ and all $F,G$ in the domains of $\bJ_u$ and $\bJ_{u^*}$ respectively. Equivalently, we look for a self-adjoint operator $\Theta$ on $L^2(\nu_\kappa)$ such that $\Theta\circ\bJ_u^*=\bJ_u\circ\Theta$ in a suitable sense.

Let us define $L^2_\mathrm{hol}(\nu_\kappa)$ to be the closure of $\cV=\C[(\alpha_{b,m})_{b\in\kB,m\geq1}]$ in $L^2(\nu_\kappa)$, which is equipped with the representation $(\bJ_u)_{u\in L^\omega\kg_\C}$ (the other representation acting trivially in this space). Since $(L^2(\nu_\kappa),(\bJ_u,\bar\bJ_u)_{u\in L^\omega\kg_\C})$ is the tensor product of two copies of $(L^2_\mathrm{hol}(\nu_\kappa),(\bJ_u)_{u\in L^\omega\kg_\C})$, it is sufficient to restrict the discussion to $L^2_\mathrm{hol}(\nu_\kappa)$.

\begin{conjecture}\label{conj:shapo}
    Assume Conjecture \ref{conj:R} holds. There exists a set of full $\nu_\kappa$-measure $\cA^\omega\subset E\subset\cA$ and a holomorphic function $\Xi$ on $E$ such that $\Re(\Xi)=\hat\sS-\sS$ on $\cA^\omega$. 
\end{conjecture}

Arguing as in the proof of Proposition \ref{prop:existence}, it is not too hard to see that the sequence $(\hat\nu^\sigma_\kappa)_{\epsilon\to0}$ is tight as~$\epsilon\to0$, and we denote by $\hat\nu_\kappa$ any subsequential limit (which we conjecture is unique). Heuristically, we have the interpretation ``$\d\hat\nu_\kappa(\gamma)=e^{-\check\kappa\hat\sS(\gamma)}D\gamma$", so that formally $\frac{\d\hat\nu_\kappa}{\d\nu_\kappa}=|e^{-\frac{\check\kappa}2\Xi}|^2$. Using Proposition \ref{prop:km_real} and the definition of $\sS$, one easily checks that the function $\hat\sS-\sS$ is harmonic on $\cA^\omega$ (with respect to the bi-invariant metric, see Remark \ref{rem:killing}), hence $\hat\sS-\sS=\Re(\Xi)$ for some holomorphic function $\Xi$ on $\cA^\omega$. Therefore, the non-trivial part of the conjecture is the extension of $\Xi$ to a holomorphic function on a larger set of full $\nu_\kappa$-mass.

Assuming the validity of the conjecture, the formula 
\[\Theta F(\alpha):=F(\hat\alpha^*)e^{-\frac{\check\kappa}2\Xi(\alpha)}\]
defines a self-adjoint operator on $L^2(\nu_\kappa)$. In this definition, we are also assuming that the random variable $\hat\alpha$ with law $\hat\nu_\kappa$ is measurable with respect to $\alpha$. An elementary rephrasing of the integration by parts formula for the representation $(\bJ_u)_{u\in L^\omega\kg_\C}$ shows that the Hermitian form $\cQ(F,G):=\langle F,\Theta G\rangle_{L^2_\mathrm{hol}(\nu_\kappa)}$ is the Shapovalov form of the representation $(L^2_\mathrm{hol}(\nu_\kappa),(\bJ_u)_{u\in L^\omega\kg_\C})$.

\bibliographystyle{alpha}
\bibliography{loop-groups}

\end{document}